\documentclass[11pt]{article}

\usepackage{amsmath,amsthm}

\usepackage{amssymb,latexsym}

\usepackage[mathscr]{euscript}

\usepackage{enumerate,color}

\topmargin -0.75cm \oddsidemargin 0.75cm \evensidemargin 1.25cm
\textwidth 14.50cm \textheight 23.05cm

\newcommand{\BF}{{\mathbb F}}

\newcommand{\BN}{{\mathbf N}}
\newcommand{\BR}{{\mathbb R}}

\newcommand{\FF}{{\mathcal{F}}}

\newcommand{\EE}{{\mathcal{E}}}

\newcommand{\TT}{{\mathcal{T}}}

\newcommand{\essinf}{\mathop{\mathrm{ess\,inf}}}
\newcommand{\esssup}{\mathop{\mathrm{ess\,sup}}}

\newtheorem{tw}{\bf Theorem}[section]
\newtheorem{stw}[tw]{\bf Proposition}

\newtheorem{wn}[tw]{\bf Corollary}

\theoremstyle{definition}
\newtheorem{df}[tw]{Definition}
\newtheorem{prz}[tw]{\bf Example}

\newtheorem{uw}[tw]{Remark}

\numberwithin{equation}{section}

\begin{document}

\title {Non-semimartingale solutions of reflected BSDEs and applications to Dynkin games}
\author {Tomasz Klimsiak\\
{\small Faculty of Mathematics and Computer Science,
Nicolaus Copernicus University} \\
{\small  Chopina 12/18, 87--100 Toru\'n, Poland}\\
{\small E-mail address: tomas@mat.umk.pl}}
\date{}
\maketitle

\date{}
\maketitle
\begin{abstract}
We introduce a new class of  reflected backward stochastic differential equations with two c\`adl\`ag barriers, which need not satisfy any separation conditions.
For that reason, in general, the solutions are not semimartingales.
We prove  existence,  uniqueness and approximation results for  solutions of equations defined on general filtered probability spaces.
Applications to nonlinear  Dynkin games are given.
\end{abstract}

\footnotetext{{\em Mathematics Subject Classification:}
Primary  60H20; Secondary 60G40}

\footnotetext{{\em Keywords:} Reflected backward stochastic differential equations, Dynkin games}

\footnotetext{This work was supported by Polish National Science Centre
(grant no. 2016/23/B/ST1/01543).}

\section{Introduction}
\label{sec1}

In the present paper, we consider  backward stochastic differential equations (RBSDEs) with two reflecting barriers $L$ and $U$. We assume merely that $L,U$ are adapted c\`adl\`ag processes such that $L^+,U^-$ are of class (D) and $L_t\le U_t$, $t\ge 0$. Because, in general, the barriers $L,U$ do not satisfy Mokobodzki's condition (existence of a special semimartingale between the barriers), treating such equations requires extending the notion of a solution to encompass the case where the first component of the solution is not a semimartingale. One of the main novelty of the paper is that we provide such an extension. We show that it is right in the sense that it coincides with the ``classical" definition (semimartingale solutions) if Mokobodzki's condition holds. Our generalized non-semimartingale solutions are unique under the monotonicity assumption on the generators of the equations. Furthermore, we show that under reasonable assumptions on the terminal condition and the generator the solutions exist and can be  approximated  by a penalization scheme. We also prove some stability result and show that there is  one-to-one correspondence between the solutions and the value processes in some generalized Dynkin games (with nonlinear expectation). Let us also stress that in the paper we consider equations on probability spaces equipped with general filtration satisfying only the usual conditions.
Our motivation for studying such general setting comes from applications to Dynkin games and variational inequalities.

Let us mention here that  in \cite{HH0,HH} Hamad\`ene and Hassani introduced  a definition of solution to RBSDE without Mokobodzki's condition,
and proposed a technique to solve it. Based on this technique and definition, in the literature there appeared  some  further results devoted to RBSDEs without Mokobodzki's condition
but, as in the first papers \cite{HH0,HH},  under rather restrictive assumptions on data (see \cite{BY,EHW,HH0,HH,HHO,HW}). We propose a new definition (similar in spirit) of solution to RBSDE without Mokobodzki's condition,  which enables us to provide a different  technique to prove
its existence, uniqueness, approximation and stability    that works in a general framework described below.

We now describe  the content of the paper and give more information about our motivations.
Let $(\Omega,\FF,P)$ be a complete probability space equipped with  a right-continuous complete filtration $\BF=\{\FF_t, t\ge 0\}$,
and let $T$ be  a (possibly infinite) $\BF$-stopping time.
We assume that we are given an $\FF_T$-measurable  integrable random variable $\xi$,
a function $\Omega\times\BR_+\times\BR\ni (\omega,t,y)\mapsto f(\omega,t,y)\in\BR$ (called generator) which is  progressively measurable with respect to $(\omega,t)$, and  $\BF$-adapted c\`adl\`ag processes $L,U$ such that $L^+,U^-$ are of class (D), $L\le U$ and
\begin{equation}
\label{eq0.0.0}
\limsup_{a\rightarrow \infty}L_{T\wedge a}\le \xi\le \liminf_{a\rightarrow \infty}U_{T\wedge a}.
\end{equation}
Let us recall  (see Appendix) that a   (semimartingale) solution of the reflected backward stochastic differential equation with terminal value $\xi$, generator $f$ and barriers $L$ and $U$  (RBSDE$^T(\xi,f,L,U)$ for short) is a triple $(Y,M,R)$ of $\BF$-adapted  c\`adl\`ag processes such that
$Y$ is of class (D), $M$ is a local martingale with $M_0=0$, $R$ is a predictable process  of finite variation with $R_0=0$, and
\begin{equation}
\label{eq1.1}
dY_t=-f(t,Y_t)\,dt-dR_t+\,dM_t,\quad Y_{T\wedge a}\rightarrow \xi\quad\mbox {as } a\rightarrow \infty,
\end{equation}
\begin{equation}
\label{eq1.1asc}
L\le Y\le U,\quad (Y_{t-}-L_{t-})\,dR^+_t=(U_{t-}-Y_{t-})\,dR^-_t=0.
\end{equation}
The crucial part of the above definition is condition (\ref{eq1.1asc}), the so-called Skorokhod condition or minimality condition, which guarantees uniqueness of solutions.
The  Skorokhod condition involves  the predictable finite variation part  of  $Y$, so the semimartingale structure of a solution is   inherent in  the notion of a solution. Clearly, the fact that each solution $Y$ to  RBSDE with barriers $L$ and $U$  is a semimartingale  forces Mokobodzki's condition  ($Y$ is a semimartingale between $L$ and $U$). One of the main goal
of the paper is to introduce a right definition of a solution to RBSDE in the case were Mokobodzki's condition is not satisfied (we merely assume that $L\le U$). In this case it may happen that every c\`adl\`ag process between the barriers $L$ and $U$  is "nowhere a semimartingale", i.e. there is no nontrivial random interval on which it is a semimartingale.  This forces a complete  change of the notion of a solution.  In particular, to get uniqueness,  we are forced to replace the Skorokhod condition (\ref{eq1.1asc}) by a new minimality condition.

It is worth mentioning that usually in the literature there are considered BSDEs  with  generator $f$ depending also on the martingale part $M$ of a solution.
In the whole paper, we focus on RBSDEs with generator $f$ depending only on $Y$ as  we are interested in the case where $Y$
is not a semimartingale, therefore  the martingale part is not well defined. The  case of RBSDEs with generator $f$ depending on $M$  requires further analysis
and its full examination is beyond the scope of the paper, so  we shall  postpone it for the future papers.

Reflected BSDEs with two  barriers satisfying Mokobodzki's condition were introduced by Cvitanic and Karatzas \cite{CvitanicKaratzas} in the case where the barriers $L,U$ are continuous and their supremums are square-integrable,  the terminal value $\xi$ is square-integrable, the terminal time $T$ is constant and finite, the generator $f$ is Lipschitz continuous  and the underlying filtration $\BF$ is Brownian. Since then the notion of reflected BSDEs was  recognized  as a very useful and important tool having applications to stochastic control, mathematical finance and the variational inequalities theory (see, e.g., \cite{BL,HL,K:PA,K:JEE,KR:JFA,KR:JEE,Lin,RS:EJP} and the references therein). Subsequently, in many papers the  assumptions adopted in \cite{CvitanicKaratzas} were weakened (see, e.g., \cite{BY,GIOQ,GIOOQ,HO,Kl:EJP,Kl:BSM,K:SPA2,K:arx,KRS:SPA,LepeltierMatoussiXu,RS:SPA} and the references therein)
but  in the vast majority  of the papers Mokobodzki's condition was  required to hold (see Remark \ref{uw2.1}).

Reflected BSDEs without Mokobodzki's condition have been much less studied.  In \cite{HH0,HHO} the authors considered RBSDEs on Brownian filtered space
with $L^2$-data, $T$ bounded, and Lipschitz continuous generator $f$.  In \cite{BY} the authors generalized the results  of \cite{HH0,HH} by considering
$L^1$-data and monotone $f$ (with linear growth), however the authors additionally assumed that  barriers are continuous and they required  $\sup_{t\le t}L^+_t, \sup_{t\le T} U^-_t$
to be  in $L^1$ (see also \cite{EHW}, where the authors considered similar framework to \cite{BY} but with $L^p$ data for $p\in (1,2)$).
To the authors' best knowledge the only papers on RBSDEs without Mokobodzki's condition on Brownian-Poisson filtered space are \cite{HH,HW},
where the authors considered $L^2$ data, $T$ bounded, Lipschitz continuous generator $f$. However uniqueness is proved only in \cite{HH}, where it is assumed  additionally that the barriers have only totally inaccessible jumps.
It is worth mentioning here that the main goal of the papers mentioned in this paragraph is to prove the existence of semimartingale solutions to RBSDEs with barriers satisfying
the so called complete separation condition, i.e. 
\begin{equation}
\label{eqi.csc12}
L_t<U_t,\,\, t\in [0,T],\quad L_{t-}<U_{t-},\,\, t\in (0,T].  
\end{equation}
The existence of non-semimartingale solution for RBSDEs with barriers satisfying merely $L\le U$ is hidden
in the proof technique considered in these papers: first step is to find a unique non-semimartingale - the so called {\em local solution} - 
under assumption that $L\le U$;  second step is to show that  under additional condition (\ref{eqi.csc12}) local solution is in fact a semimartingale
and solves RBSDE by means of the  classical definition. As a by-product we get that (\ref{eqi.csc12}) implies Mokobodzki's condition. 
At this point we would like to mention the paper \cite{T}, where the author proposed completely different method of solving RBSDEs 
with barriers satisfying (\ref{eqi.csc12}).

One of the most  important result  proved in \cite{CvitanicKaratzas} concerns the connection between (semimartingale) solutions of RBSDEs and so-called Dynkin games introduced in \cite{DY} and  studied extensively by many authors (see, e.g., \cite{A-N,Bismut1,Dynkin,DY,KQC,LM,Morimoto,Stettner,Zabczyk}). In \cite{CvitanicKaratzas} (see \cite{K:SPA2} for the general setting) it is proved that if $Y$ is the first component of a (semimartingale) solution of  RBSDE$^T(\xi,f,L,U)$, then for any stopping time $\alpha\le T$,
\begin{equation}
\label{eqi.1}
Y_\alpha=\esssup_{\sigma\ge \alpha}\essinf_{\tau\ge \alpha}E\Big(\int_\alpha^{\tau\wedge\sigma}f(r,Y_r)\,dr+L_\sigma\mathbf{1}_{\sigma<\tau}
+U_\tau\mathbf{1}_{\tau\le\sigma<T}+\xi\mathbf{1}_{\sigma=\tau=T}|\FF_{\alpha}\Big).
\end{equation}
Recently it was proved in \cite{DQS} for Brownian-Poisson filtration and $L^2$-data (see also \cite{BY} for Brownian filtration and $L^1$-data)  that
under some conditions on $f$  the above equality may be
equivalently stated as
\begin{equation}
\label{eqi.3}
Y_\alpha=\esssup_{\sigma\ge \alpha}
\essinf_{\tau\ge \alpha}\EE^f_{\alpha,\tau\wedge\sigma}
(L_\sigma\mathbf{1}_{\sigma<\tau}
+U_\tau\mathbf{1}_{\tau\le\sigma<T}+\xi\mathbf{1}_{\sigma=\tau=T}),
\end{equation}
where $\EE^f$ is the nonlinear $f$-expectation  introduced  by
Peng \cite{Peng1} (see also \cite{Peng2}). In \cite{EQ} it was
shown that the theory of nonlinear pricing systems has  wide
applications in mathematical finance. When $f=0$,  (\ref{eqi.3})
reduces to  the classical Dynkin game, and when  $f\neq0$,  it is
called a generalized Dynkin game (see \cite{DQS}).

Assume  that $Y$ is a solution to (\ref{eqi.1}) or (\ref{eqi.3}).
Here arises a natural question whether $Y$ is the first component
of a solution to some  reflected BSDE. In general, the answer is
``no", because if $f= 0$ and $L=U$, then from (\ref{eqi.3}) it
follows that $Y=L$. Hence, since we only assume that  $L^+$ is  a
c\`adl\`ag process of class (D), the process $Y$ need not be a
semimartingale. On the other hand,  by the classical  definition of a solution to (\ref{eq1.1}) and (\ref{eq1.1asc}), the first component $Y$ of the solution
is a  semimartingale. We see that to obtain a
one-to-one correspondence between solutions of RBSDEs and
solutions to Dynkin games requires an extension of the notion of a
solution to RBSDE.

The need of extending the notion of reflected BSDEs also arises in
the problems of approximation of the value process in Dynkin
games.  Recall that there are basically two methods of solving
RBSDEs with two reflecting barriers (or solving the related Dynkin
game problem). The first one consists in solving the following
system of optimal stopping problems introduced in
\cite{Bismut1,Bismut2}:
\[
\left\{
\begin{array}{ll} Y^1_t=\esssup_{t\le\tau\le T}E(Y^2_\tau\mathbf{1}_{\tau<T}+\int_t^\tau f(r)\,dr+L_\tau\mathbf{1}_{\tau<T}+\xi\mathbf{1}_{\tau=T}|\FF_t),
\smallskip\\
 Y^2_t=\esssup_{t\le\tau\le T}E(Y^1_\tau\mathbf{1}_{\tau<T}-U_\tau\mathbf{1}_{\tau<T}|\FF_t),
\end{array}
\right.
\]
where  $f$ is independent  of $Y$.
Putting  $Y=Y^1-Y^2$, we obtain  a solution of the  linear
RBSDE$^T(\xi,f,L,U)$. Next, by a fixed point argument, one can
obtain the existence of a solution in the  nonlinear case. Note
that  the above methods always leads to a semimartingale solution,
i.e. $Y$ is a semimartingale, because $Y^1, Y^2$ are supermartingales. The second method is the so-called
penalty method. It is known (see, e.g., \cite{K:SPA2}) that if
Mokobodzki's condition is satisfied, then
under some assumptions on the data, the first component $Y^n$  of
the solution $(Y^n,M^n)$ of  the BSDE
\begin{align}
\label{eqi.5.5} Y^n_t&=\xi+\int_t^T f(r,Y^n_r)\,dr
+n\int_t^T(Y^n_r-L_r)^-\,dr \nonumber\\
&\quad-n\int_t^T(Y^n_r-U_r)^+\,dr-\int_t^T\,dM^n_r,\quad t\in[0,T],
\end{align}
converges as $n\rightarrow\infty$ to a process $Y$ being the
first component of the (semimartingale)  solution to
RBSDE$^T(\xi,f,L,U)$. The question arises whether $\{Y^n\}$
converges if we omit  the assumption of the existence of a special
semimartingale between the barriers. Secondly, if the answer is
``yes", what kind of equation does  the limit process solve? The
problem is rather subtle. It is worth noting here that the penalty
method had been applied to  Dynkin games problems much before the
notion of BSDEs was introduced (see
\cite{Robin,Stettner,Stettner3,SZ}). From the results of Stettner
\cite{Stettner3} (see also \cite{Stettner,Stettner1} for the
Markovian case) it follows (see Remark \ref{uw.stt} for details) that in
the linear case, under some additional  assumptions on the barriers, the  solutions of (\ref{eqi.5.5}) can converge to
a solution of (\ref{eqi.1}) without  Mokobodzki's condition.
Part of our results may be viewed as a far reaching generalization
of Stettner's results on approximation of the value process in
Dynkin games.

As explained above, to show the one-to-one correspondence  between
solutions to RBSDEs and solutions of the  generalized Dynkin
problem (\ref{eqi.1}) or (\ref{eqi.3}) and its approximation by a penalization scheme, we
find ourselves forced to introduce a  new definition of a solution
in which we do not require that the process $Y$ is a
semimartingale. Aiming for this, in the paper we consider
the notion of a local solution to  RBSDE$^T(\xi,f,L,U)$
on a progressively measurable set $A\subset [0,T]\times \Omega$. By local solution we mean an $\BF$-adapted c\`adl\`ag process $Y$ of class (D)
such that $Y_{T\wedge a}\rightarrow \xi$ and for every random interval $[[\alpha,\beta]]\subset A$, where $\alpha\le \beta$
are stopping times, (\ref{eq1.1}) and (\ref{eq1.1asc}) hold on $[[\alpha,\beta]]$.
In this notion it is hidden that $Y$ must be a
special semimartingale locally on $A$, i.e.  on every random interval $[[\alpha,\beta]]\subset A$.

One of the main achievement of the paper  is the observation that  for a given $\mathbb F$-adapted c\`adl\`ag processes $L,U$   such that $L\le U$   there always exists  a family $\{A_\tau,\tau\in\mathcal T\}$ of progressively measurable sets  such that Mokobodzki's condition is satisfied locally on each set $A_\tau$ and there exists at most one  c\`adl\`ag process $Y$ of class (D)
which  solves RBSDE$^T(\xi,f,L,U)$ locally on each $A_\tau$ for every $\tau\in\TT$. We call $Y$ the solution to RBSDE$^T(\xi,f,L,U)$. More precisely, to define a solution, we
first  set
\[
\gamma_\tau=\inf\{\tau<t\le T:L_{t-}=U_{t-}\}\wedge\inf\{\tau\le t\le T: L_t=U_t\}\wedge T
\]
and
\[
\Lambda_\tau=\{L_{\gamma_\tau-}=U_{\gamma_\tau-}\}\cap\{\tau<\gamma_\tau<\infty\}.
\]
We call the family $\{(\gamma_{\tau},\Lambda_{\tau}), \tau\in\mathcal T\}$  the $\ell$-system associated with $L,U$. We then say that a c\`adl\`ag process $X$ is a special $\ell$-semimartingale,  if  for every stopping time $\tau\le T$,  $X$ is a special semimartingale locally on the set $A_\tau$ defined by
\[
A_\tau(\omega):=[\tau(\omega),\gamma_\tau(\omega)\}=
 \left\{
\begin{array}{ll} [\tau(\omega),\gamma_\tau(\omega)], &\omega\notin\Lambda_\tau,
\smallskip\\ {[\tau(\omega),\gamma_\tau(\omega))}, &\omega\in\Lambda_\tau.
\end{array}
\right.
\]
By a solution of RBSDE$^T(\xi,f,L,U)$ we mean a pair $(Y,\Gamma)$ of $\mathbb{F}$-adapted c\`adl\`ag processes such that $Y, \Gamma$ are special $\ell$-semimartingales  satisfying
\begin{equation}
\label{eq1.7}
\begin{cases}
Y_t=Y_{T_a}+\int_t^{T_a}f(r,Y_r)\,dr+\Gamma_{T_a}-\Gamma_t, \quad  t\in [0,T_a],\,\,a\ge0,
\medskip\\
L_t\le Y_t\le U_t,\quad  t\in [0,T_a],\, a\ge 0,\medskip\\
\int_{[\tau,\gamma_\tau\}}(Y_{r-}-L_{r-})\,d\Gamma^{v,+}_r(\tau)
=\int_{[\tau,\gamma_\tau\}}(U_{r-}-Y_{r-})\,d\Gamma^{v,-}_r(\tau)=0,\medskip\\
Y_{T_a}\rightarrow \xi\quad\mbox {a.s.  as } a\rightarrow \infty.
\end{cases}
\end{equation}
In (\ref{eq1.7}), $T_a=T\wedge a$, $\Gamma^v(\tau)$ is the (unique) predictable finite
variation process on $[\tau,\gamma_\tau\}$  from the Doob-Meyer decomposition of $\Gamma$ on
$[\tau,\gamma_\tau\}$:
\[
\Gamma_t=\Gamma_\tau+\Gamma_t^v(\tau)+\Gamma_t^m(\tau),\quad t\in [\tau,\gamma_\tau\},
\]
where $\Gamma^m(\tau)$ is a  local martingale on $[\tau,\gamma_\tau\}$ with $\Gamma^m_\tau(\tau)=0$. 
At first glance  the above definition of a solution, making use of the family $\{A_{\tau}\}$, may seem a bit intricate.
In Examples \ref{dex.1}--\ref{dex.3} we shall show that to get uniqueness of solutions it  cannot be simpler in the sense that in general we cannot replace \{$A_{\tau}\}$
by a single progressively measurable set or by intervals $[[\tau,\gamma_\tau[[$ or $[[\tau,\gamma_\tau]]$ defined independently of $Y$ through $L$ and $U$.

From now on, solutions of RBSDEs in the sense of
(\ref{eq1.1}), (\ref{eq1.1asc}) will be called semimartingale
solutions, and the solutions in the generalized sense will be
called non-semimartingale solutions or simply solutions.
In Section \ref{sec4}, we show 
that if $(Y,\Gamma)$ satisfies (\ref{eq1.7})
and there exists a special semimartingale between the barriers $L$ and $U$,  then $Y,\Gamma$ are special semimartingales and the triple $(Y,\Gamma^v,\Gamma^m)$, where $\Gamma^v$ (resp. $\Gamma^m$) is the predictable finite variation part (resp.  martingale part)  from the Doob-Meyer decomposition of $\Gamma$ on $[0,T]$, is
a semimartingale solution of RBSDE$^T(\xi,f,L,U)$.

In the context of reflected  BSDEs, the concept of local solutions (somewhat different from ours, since we consider here arbitrary progressively measurable set $A$)  was considered for the first time by Hamad\`ene and Hassani 
in \cite{HH0} (see also \cite{HH,HHO}) and also, as it is in the present paper, this concept was used to define a unique solution to RBSDE without Mokobodzki's condition.
The definition proposed in \cite{HH0} is similar in spirit to ours but instead of intervals $[\tau,\gamma_\tau\}$ the authors in \cite{HH0}
consider shorter closed intervals $[[\tau,\delta_\tau]]$, with $\delta_\tau$ defined through $Y$. 
However, the main difference between our definition and the one considered in \cite{HH0} is that here intervals  $[\tau,\gamma_\tau\}$  are defined
{\em independently} of $Y$.  For  this "independence" we pay a small price by using slightly more complex  intervals $[\tau,\gamma_\tau\}$  that are  randomly closed/open from the right. In  return, based on this new definition,   we may  provide  new  techniques  thanks to which we can  prove
in a simple way the existence, uniqueness and approximation results in our general framework.

We now describe the content of the paper. 
In Sections \ref{sec3} and \ref{sec4}, we introduce in more detail the notion of non-semimartingale solutions of RBSDEs, and  we prove our main results on existence, uniqueness and approximation of  solutions. 
First, we show that  under the assumption that $y\mapsto f(t,y)$ is nonincreasing a comparison theorem for solutions to
RBSDEs 
holds true. It  implies uniqueness of solutions.  Moreover,  we prove stability of solutions, 
i.e. we show that if $(Y^i,\Gamma^i)$, $i=1,2$, are  solutions of
RBSDE$^{T}(\xi^i,f^i,L^i,U^i)$, then
\begin{align}
\label{eq1.9}
\|Y^1-Y^2\|_{1;T}&\le E|\xi^1-\xi^2|
+E\int_0^{T}|f_1(t,Y^2_t)-f_2(t,Y^2_t)|\,dt\nonumber\\
&\quad+\|L^1-L^2\|_{1;T}+\|U^1-U^2\|_{1;T},
\end{align}
where $\|Y\|_{1;T}=\sup_{\tau\le T, \tau<\infty}E|Y_\tau|$.
To show  the existence of a solution, we additionally impose some integrability
conditions on $f$. In the paper, we assume that
\begin{equation}
\label{eq1.12}
\int_0^T|f(t,y)|\,dt<\infty\quad\mbox{ for every }y\in\BR
\end{equation}
and there exists a c\`adl\`ag process $S$ being a  difference of
supermartingales of class (D) such that
\begin{equation}
\label{eq1.11}E\int_0^T|f(t,S_t)|\,dt<\infty.
\end{equation}
The second condition is commonly used in  the literature with
$S=0$. Both conditions are the minimal known conditions ensuring the
existence of solutions of BSDEs with no reflection (condition
(\ref{eq1.11}) is necessary when $f$ is positive).
We prove that if   the function $y\mapsto f(t,y)$ is continuous and nonincreasing, and moreover, $f$ satisfies  (\ref{eq1.12}) and  (\ref{eq1.11}),  then there exists a unique solution
$(Y,\Gamma)$ of RBSDE$^T(\xi,f,L,U)$. We also show that under these assumptions  for every
strictly positive bounded $\mathbb F$-progressively measurable
process $\eta$ such that
\[
E\int_0^T\eta_t(S_t-L_t)^-\,dt+E\int_0^T\eta_t(S_t-U_t)^+\,dt<\infty
\]
there exists a unique solution to the following penalized BSDE
\begin{align}
\label{eqi.8}
Y^n_t&=\xi+\int_t^T f(r,Y^n_r)\,dr
+n\int_t^T\eta_r(Y^n_r-L_r)^-\,dr\nonumber\\
&\quad-n\int_t^T\eta_r(Y^n_r-U_r)^+\,dr-\int_t^T\,dM^n_r,
\end{align}
and for every $a\ge 0$,
\[
Y^n_t\rightarrow Y_t,\quad \int_0^tn(Y^n_r-L_r)^-\,dr
-\int_0^tn(Y^n_r-U_r)^+\,dr-M^n_t\rightarrow \Gamma_t,\quad t\in
[0,T_a].
\]
In the case where $T$ is bounded, one can take $\eta\equiv1$, so
(\ref{eqi.8}) reduces to the  usual penalization scheme
(\ref{eqi.5.5}). Moreover, we show that if
\begin{equation}
\label{eqi.2.3}
^pL\ge L_-\,,\qquad ^pU\le U_-\,,
\end{equation}
where $^pL$ (resp. $^pU$) is the predictable projection of $L$
(resp. $U$), then
the convergence of $\{Y^n\}$ is uniform in probability on compact
subsets of $\BR_+$ (the so-called ucp convergence).

In Section \ref{sec5}, under the additional assumption that $L,U$ are of class (D)  (and not merely $L^{+}$ and $U^{-}$), we study connections of RBSDEs with  Dynkin games
and nonlinear expectation. We show that if $Y$ is a solution of (\ref{eqi.1}), then $Y$ is
the first component of a solution of RBSDE$^T(\xi,f,L,U)$, and
conversely, if $(Y,\Gamma)$ is a solution of RBSDE$^T(\xi,f,L,U)$
and $E\int_0^T|f(r,Y_r)|\,dr<\infty$, then $Y$ is a solution to
(\ref{eqi.1}). 
We also prove that if (\ref{eqi.2.3}) is satisfied, then $(\sigma_\alpha^*,\tau_\alpha^*)$ defined by
\begin{equation}
\label{eqi.2} \sigma^*_\alpha=\inf\{t\ge\alpha: Y_t=L_t\}\wedge T,
\qquad \tau^*_\alpha=\inf\{t\ge\alpha:Y_t=U_t\}\wedge T
\end{equation}
is a saddle point for (\ref{eqi.1}). 
Moreover, the  process $Y+\int_\alpha^\cdot f(r,Y_r)\,dr$ is a
uniformly integrable martingale on the closed interval
$[\alpha,\sigma^*_\alpha\wedge\tau^*_\alpha]$.

We next generalize  the notion of the nonlinear $f$-expectation introduced
in \cite{Peng1} for Brownian filtration and square
integrable data, and then extended in
\cite{QS} to the case of filtration generated
by Brownian motion and an independent Poisson random measure, and we show that
\begin{equation}
\label{eq.eqx2}
(Y,\Gamma)\mbox{ is a solution of RBSDE$^T(\xi,f,L,U)$
\quad iff\quad $Y$ satisfies (\ref{eqi.3})}.
\end{equation}
Let us stress here that  (\ref{eq.eqx2}) holds true although in general the integral
$E\int_0^T|f(r,Y_r)|\,dr$ may be infinite. Furthermore, we show
that under (\ref{eqi.2.3}) the pair (\ref{eqi.2}) is a saddle
point for the  generalized Dynkin game (\ref{eqi.3}).

Finally,
in Section \ref{sec7},  we show that if $L$ and $U$ are of class (D), then there is  a solution of
RBSDE$^T(\xi,f,L,U)$ even if we drop condition (\ref{eq1.11}). Unfortunately, we
do not know whether  it is a limit of some penalization scheme. Nevertheless, this result is
 interesting because it implies that there exist data $\xi,f$ ($\xi\in L^1$ and $f$ is continuous and nonincreasing with respect to $y$ and satisfies (\ref{eq1.12})) such that  there is no solution of class (D) to BSDE$^T(\xi,f)$ but there exists a solution to RBSDE$^T(\xi,f,L,U)$ for all c\`adl\`ag barriers of class (D) satisfying (\ref{eq0.0.0}) (see Example \ref{prz4.1.09}).

In Section \ref{sec2} (Appendix), we collect results on semimartingale solutions to RBSDEs on general filtered space.
We also  extend the results of \cite{K:SPA2} to the case of arbitrary, possibly unbounded terminal time $T$.

\section{Notation and standing assumptions}
\label{secN}

In the paper, $(\Omega,\FF,P)$ is a complete probability space equipped with  a right-continuous complete filtration  $\BF=\{\FF_t, t\in[0,\infty]\}$ with $\FF_\infty=\bigvee_{t\ge 0}\FF_t$.
We assume that we are given
a function
\[
\Omega\times\BR_{+}\times\BR\ni (\omega,t,y)\mapsto f(\omega,t,y)\in\BR
\]
which is  $\BF$-progressively measurable with respect to $(\omega,t)$ for every $y\in\BR$.
Throughout the paper, unless stated otherwise, we assume that  $T$ is an $\BF$-stopping time, i.e. $T:\Omega\to [0,\infty]$ is a random variable, and $\{T\le t\}\in\FF_t$ for any $t\ge 0$,
 and  $\xi$  is an  $\FF_T$-measurable random variable.
We also assume that   $L,U$ are  $\BF$-adapted c\`adl\`ag processes  such that $L^+,U^-$ are of class (D), and moreover, $L_t\le U_t$, $ t\in [0,T\wedge a]$, $a\ge 0$, and
\begin{equation}
\label{eq0.0.0abc}
\limsup_{a\rightarrow \infty}L_{T\wedge a}\le \xi\le \liminf_{a\rightarrow \infty}U_{T\wedge a}.
\end{equation}
In the sequel, for given processes $X,Y$, we write $X\le Y$ if  $X_t\le Y_t,\, t\in [0,T\wedge a]$, $a\ge 0$ a.s.
Let us recall that and $\BF$-adapted process $K$
is called an increasing process if its trajectories are nonincreasing a.s. For a given finite variation process $V$, we denote by  $V^+, V^-$ the unique
increasing processes from the Hahn-Jordan decomposition of $V$ ($V=V^+-V^-$).

Let $\alpha,\beta$ be two  stopping times such that $\alpha\le\beta$.
We say that an $\BF$-progressively measurable process $Y$ is of class (D) on $[\alpha,\beta]$
if the family $\{Y_\tau, \alpha\le\tau\le\beta, \tau<\infty\}$ is uniformly integrable. We set
\begin{equation}
\label{eq2.dn1}
\|Y\|_{1;\alpha,\beta}=\sup_{\alpha\le \tau\le\beta, \tau<\infty}E|Y_\tau|,\qquad \|Y\|_{1;\beta}=\|Y\|_{1;0,\beta}.
\end{equation}
We say that a nonincreasing sequence of stopping times $\{\tau_k\}$
is a chain on $[\alpha,\beta]$ if $\alpha\le\tau_k\le\beta$, $k\ge 1$, and  the set $\{k\ge1:\tau_k< \beta\}$ is finite a.s.

We denote by  $\TT$  the set of all $\BF$-stopping times $\tau$ such that $\tau\le T$, and for given $\alpha\in\mathcal T$, we denote by $\mathcal T_\alpha$ the set of all $\tau\in \mathcal T$ such that $\tau\ge \alpha$.
For a stopping time $\sigma$ and  $\Lambda\in\FF_\sigma$, we set
\[
\sigma_\Lambda(\omega)= \left\{
\begin{array}{ll}\sigma(\omega), &\omega\in\Lambda,
\smallskip\\
 \infty, &\omega\notin\Lambda.
\end{array}
\right.
\]
It is well known that $\sigma_\Lambda$ is a stopping time.

For  stopping times $\alpha\le\beta$, we denote by $[[\alpha,\beta]]$  the random interval defined as
\[
[[\alpha,\beta]]=\{(t,\omega)\in [0,\infty)\times\Omega: \alpha(\omega)\le t\le \beta(\omega)\}.
\]
We put  $[[\alpha]]:= [[\alpha,\alpha]]$, $]]\alpha,\beta]]:= [[\alpha,\beta]]\setminus [[\alpha]]$, $[[\alpha,\beta[[:= [[\alpha,\beta]]\setminus [[\beta]]$, $]]\alpha,\beta[[:=[[\alpha,\beta]]\setminus([[\alpha]]\cup[[\beta]])$.

In the sequel, for a given progressively measurable set $A\subset \mathbb R^+\times\Omega$, we say that some property holds locally on $A$ if it holds on $[[\alpha,\beta]]\subset A$ for every $\alpha,\beta\in \TT$ such that $\alpha\le\beta$.

In the paper, we use frequently the notions and results concerning semimartingale solutions to BSDEs and  reflected BSDEs which are collected in  Appendix.
Especially, we use the notions of solutions to  BSDE$^{\alpha,\beta}(\xi,f)$,
$\mbox{\underline{R}}$BSDE$^{\alpha,\beta}(\xi,f,L)$, $\overline{\rm{R}}${\rm BSDE}$^{\alpha,\beta}(\hat\xi,f,U)$,
RBSDE$^{\alpha,\beta}(\xi,f,L,U)$ for given $\alpha,\beta\in\mathcal T,\, \alpha\le \beta$. We also use the   convention that  BSDE$^T$ stands for BSDE$^{0,T}$.

\section{Reflected BSDEs with bounded terminal time}
\label{sec3}

In this section, we assume that $T$ is a bounded $\BF$-stopping time.

\begin{df}
Let $A\subset \mathbb R_+\times\Omega$ be a progressively measurable set.
We say that a c\`adl\`ag progressively measurable process $Y$ is a solution of RBSDE$^T(\xi,f,L,U)$ locally on $A$ if   $Y_T=\xi$, and for all $\alpha,\beta\in \TT$  such that $\alpha\le\beta$ and  $[[\alpha,\beta]]\subset A$,  $Y$ is a semimartingale  solution of RBSDE$^{\alpha,\beta}(Y_\beta,f,L,U)$.
\end{df}

In the present paper, we  assume merely that $L\le U$, so in general  the barriers do not satisfy
Mokobodzki's condition. For that reason, solutions to reflected  BSDEs need not be semimartingales. Therefore, as already explained in Introduction,
to deal with equations with such barriers requires the introduction of
new definition of a solution.
The problem with new definition is rather subtle. Consider some reflected equation with
c\`adl\`ag barriers $L$ and $U$ such that $L\le U$. The first (naive) idea to solve it is the following. We find a progressively measurable set $A$ (the bigger the better) on which Mokobodzki's  condition is locally satisfied. We then solve the equation  locally on $A$ and get the solution by aggregation local solutions on $A$ and
by putting some natural value on the set $A^c$.  For instance,  we solve  locally the equation on the  progressively measurable set $A=\{L<U\}$ and next we  put $Y=L=U$ on the set $\{L=U\}$. Unfortunately, this approach fails. This follows from Examples \ref{dex.1}--\ref{dex.3} given below.
\begin{enumerate}
\item[(a)] In Example \ref{dex.1}, we show that  there exist $\mathbb F$-adapted c\`adl\`ag barriers $L,U$ ($L\le U$) such that there is no c\`adl\`ag process between the barriers such that
it is a semimartingale  locally on the set $\{L<U\}$. This shows that we cannot expect that a solution to RBSDE is a semimartingale locally on the set $\{L<U\}$ and, as a consequence, we cannot solve  RBSDE locally on the set $\{L<U\}$.

\item[(b)] By Remark \ref{uw2.1}, for any  $\mathbb F$-adapted  c\`adl\`ag barriers $L,U$ ($L\le U$) Mokobodzki's condition is satisfied locally on the set $\{L<U\}\cap \{L_-<U_-\}$, so it is possible to solve RBSDE locally on the set
$\{L<U\}\cap \{L_-<U_-\}$. In Example \ref{dex.2}, we show that there may be infinitely many processes $Y$ of class (D)
which solve some linear RBSDE locally on the set $\{L<U\}\cap \{L_-<U_-\}$.

\item[(c)]  By (a), the set $\{L<U\}$ is too big to satisfy locally Mokobodzki's condition. On the other hand, although Mokobodzki's condition is satisfied locally on the set  $\{L<U\}\cap \{L_-<U_-\}$, (b) shows that  it  is to small to get uniqueness. In Example \ref{dex.3}, we  show that it may happen that there is no progressively measurable set $A$ such that
\[
\{L<U\}\cap \{L_-<U_-\}\,\, \subset\,\, A\,\,\subset\,\, \{L<U\}
\]
having the property that  Mokobodzki's condition is satisfied locally on $A$ and we get uniqueness by solving RBSDE locally on $A$.

\end{enumerate}

\begin{prz}
\label{dex.1}
Let $\Omega=\BR$, $T=2$ and $\FF_t=\{\emptyset,\Omega\}$, $\BF=\{\FF_t, t\in [0,T]\}$. We set $f\equiv 0$, $\xi\equiv0$, and
\[
L_t=(1-t)\cos(\frac{\pi}{1-t})\mathbf{1}_{[0,1)}(t), \quad t\in[0,T],
\]
\[
U_t=(L_t+\frac12(1-t))\mathbf{1}_{[0,1)}(t)+\mathbf{1}_{[1,2]}(t),\quad t\in[0,T].
\]
Since the filtration $\BF$ is trivial, a process $Y$ is an $\BF$-semimartingale if and only if
it is a process of finite variation.
Of course, any solution of the problem RBSDE$^T(\xi,f,L,U)$ has to satisfy $L\le Y\le U$. In particular, putting $t_n=(n-1)/n$, we have
\[
\frac{1}{2n}=L_{t_{2n}}\le Y_{t_{2n}},\quad Y_{t_{2n+1}}\le U_{t_{2n+1}}=\frac{-1}{2n+1}+\frac{1}{4n+2}\,.
\]
Observe that $\{L=U\}=\emptyset$ and
\[
\mbox{Var}_{[0,1]}(Y)\ge \sum_{n=2}^\infty |Y_{t_{2n}}-Y_{t_{2n+1}}|\ge \sum_{n=2}^\infty\frac{1}{2n+1}=\infty,
\]
so $Y$ is not a semimartingale. Observe that $L_{-1}=U_{1-}$, so the above example is not in contradiction with Remark \ref{uw2.1}.
\end{prz}

\begin{prz}
\label{dex.2}
We define $\Omega,T$ and $\BF$ as in Example \ref{dex.1}.  Let
\[
L_t=-t\mathbf{1}_{[0,1)}(t)+(t-1)\sin{[\pi(t-1)]}\mathbf{1}_{[1,2]}(t),\quad t\in [0,2],
\]
\[
U_t=t\mathbf{1}_{[0,1)}(t)+(t-1)\sin{[\pi(t-1)]}\mathbf{1}_{[1,2]}(t),\quad t\in [0,2].
\]
Observe that $\{L< U\}\cap\{L_-< U_-\}=\{L< U\}=[0,1)$.
Let $r\in (0,1)$ and
\[
Y^r_t=t\mathbf{1}_{[0,r)}(t)+r\mathbf{1}_{[r,1)}(t)+(t-1)\sin{[\pi(t-1)]}\mathbf{1}_{[1,2]}(t),\quad t\in [0,2].
\]
It is easy to verify that for every $r\in (0,1)$ the process $Y^r$ is a special semimartingale of class (D) with $Y^r_T=0$, and that $Y^r$ is a solution of RBSDE$^{a,b}(Y_b,0,L,U)$ for every $a,b\in [0,1)$ with  $a\le b$.
\end{prz}

\begin{prz}
\label{dex.3}
We define $\Omega,T$ and $\BF$ as in Example \ref{dex.1}.  We set
\[
L^0_t= 1-\sum_{n=1}^\infty(t-\frac{1}{n+1})\mathbf{1}_{[\frac{1}{n+1},\frac1n)}(t),\quad t\ge 0,
\]
\[
U^0_t= 1+\sum_{n=1}^\infty(t-\frac{1}{n+1})\mathbf{1}_{[\frac{1}{n+1},\frac1n)}(t),\quad t\ge 0,
\]
and then,  for $t\in [0,2]$ we set
\[
L_t=(1+(t+1)\cos{\frac{\pi}{1+t}})L^0_{1+t}-\mathbf{1}_{[0,1)}(t),
\]
\[
U_t=(1+(t+1)\cos{\frac{\pi}{1+t}})U^0_{1+t}+\mathbf{1}_{[0,1)}(t).
\]
Observe that $\{L< U\}\cap \{L_-< U_-\}=[0,2]\setminus N$, where $N=\{1\}\cup\{1+\frac1n,\, n\ge 2\}$.
From Example \ref{dex.2} it follows that for every $a\in N$, if there exists a c\`adl\`ag progressively measurable process $Y$ of class (D)
with $Y_T=\xi$ such that $Y$ solves RBSDE$^T(0,0,L,U)$ locally on $[0,T]\setminus \{a\}$, then there are infinitely many processes with these properties. Therefore the only  extension of $\{L< U\}\cap \{L_-< U_-\}$ ensuring uniqueness of $Y$ is the whole interval
 $[0,2]$. However, from the construction of $L,U$ it follows that each process  $Y$ lying between $L$ and $U$ is of infinite variation on $[0,2]$, so it is not a special semimartingale.
\end{prz}

\subsection{Definition of a solution}

For  $\tau\in\TT$, we define the stopping time $\dot\gamma_\tau$ by
\[
\dot\gamma_\tau=\inf\{\tau<t\le T: L_{t-}=U_{t-}\}\wedge\inf\{\tau\le t\le T:L_t=U_t\},
\]
and then we set
\begin{equation}
\label{eq2.1}
\gamma_\tau=\dot\gamma_\tau\wedge T,\qquad \Lambda_\tau=\{L_{\gamma_\tau-}=U_{\gamma_\tau-}\}\cap\{\tau<\gamma_\tau\}.
\end{equation}
Observe that $\Lambda_\tau\in\FF_{\gamma_\tau-}$ and the stopping time $(\gamma_\tau)_{\Lambda_\tau}$ is predictable since   the sequence $\{\alpha_n:=\inf\{\tau<t<\gamma_\tau: |L_t-U_t|\le\frac1n\}\wedge n$ announces it.
Let $\{\dot\delta^{k}_\tau\}$  ($\dot\delta^{k}_\tau\ge\tau$) be an announcing sequence for $(\gamma_\tau)_{\Lambda_\tau}$ and let $\delta^{k}_\tau=\dot\delta^{k}_\tau\wedge T$. We put
\begin{equation}
\label{eq2.3cdf}
\gamma^{k}_\tau=\delta_\tau^{k}\wedge\gamma_\tau.
\end{equation}
In the whole paper we use the following notation
\[
[\tau(\omega),\gamma_\tau(\omega)\}=
 \left\{
\begin{array}{ll} [\tau(\omega),\gamma_\tau(\omega)], &\omega\notin\Lambda_\tau,
\smallskip\\ {[\tau(\omega),\gamma_\tau(\omega))}, &\omega\in\Lambda_\tau.
\end{array}
\right.
\]
We call the family $\{(\gamma_\tau,\Lambda_\tau), \tau\in\TT\}$  the $\ell$-system associated with $L$ and $U$.
Observe that
\[
[\tau,\gamma_\tau\}=\bigcup_{k\ge 1} [\tau,\gamma^{k}_\tau].
\]
In what follows we also adopt the convention that $[a,a]=[a,a)=\{a\}$.

Let   $\alpha,\beta\in\TT$ ($\alpha\le\beta$). We say that an  $\BF$-adapted process $\Gamma$ is a (local) martingale (resp. (predictable) increasing process) on
$[\alpha,\beta]$ if there exist
a (local) $\BF$-martingale $M$ (resp. a (predictable) increasing $\BF$-adapted process $A$) such that  $\Gamma_t=M_t$, $t\in [\alpha,\beta]$ (resp. $\Gamma_t=A_t$, $t\in [\alpha,\beta]$).

We say that an $\BF$-adapted
process $\Gamma$ is a (local) martingale (resp. (predictable) increasing process) on $[\tau,\gamma_\tau\}$ if it is a (local) martingale (resp. (predictable) increasing process)  on $[\tau,\gamma^{k}_\tau]$ for $k\ge 1$.
\begin{df}
We say that an $\BF$-adapted c\`adl\`ag process $\Gamma$ is an $\ell$-martingale (resp. local $\ell$-martingale) if
it is a martingale (resp. local martingale) on $[\tau,\gamma_\tau\}$ for every $\tau\in\TT$. We say that $\Gamma$
is an $\ell$-semimartingale (resp. special $\ell$-semimartingale) if $\Gamma$ is a semimartingale
(resp. special semimartingale) on $[\tau,\gamma_\tau\}$ for every $\tau\in\TT$.
\end{df}

For a given special $\ell$-semimartingale $\Gamma$,  we denote by $\Gamma^v(\tau)$ (resp. $\Gamma^m(\tau)$)  its predictable finite variation part (resp. local martingale part)
from the Doob-Meyer decomposition on $[\tau,\gamma_\tau\}$. For a process $\Gamma$ and finite $\alpha,\beta\in\TT$ such that $\alpha\le\beta$, we denote by $\int_\alpha^\beta\,d\Gamma_r$ the difference $\Gamma_\beta-\Gamma_\alpha$.

\begin{df}
\label{df.main}
We say that a pair $(Y,\Gamma)$ of $\BF$-adapted c\`adl\`ag process  is a solution of the reflected backward stochastic differential equation on the interval $[0,T]$ with terminal time $\xi$, generator  $f$, lower barrier $L$ and upper barrier $U$ (RBSDE$^T(\xi,f,L,U)$ for short) if
\begin{enumerate}
\item[(a)] $Y$ is of class (D), $\Gamma$ is a special $\ell$-semimartingale,
\item[(b)] $\int_0^T|f(r,Y_r)|\,dr<\infty$ and
\[
Y_t=\xi+\int_t^T f(r,Y_r)\,dr+\int_t^T\,d\Gamma_r,\quad t\in [0,T],
\]
\item[(c)] $L_t\le Y_t\le U_t,\, t\in [0,T]$,
\item[(d)] for every $\tau\in\TT$,
\[
\int_{[\tau, {\gamma_\tau}\}}(Y_{r-}-L_{r-})\,d\Gamma^{v,+}_r(\tau)
=\int_{[\tau,{\gamma_\tau}\}}(U_{r-}-Y_{r-})\,d\Gamma^{v,-}_r(\tau)=0.
\]
\end{enumerate}
\end{df}

\begin{uw}
Of course, in the above definition the process $\Gamma $ is determined by $Y$ through the formula
\[
\Gamma_t=Y_0-Y_t-\int_0^tf(r,Y_r)\,dr,\quad t\in [0,T].
\]
That is why in the whole paper we shall write that a solution of RBSDE is $Y$ and $(Y,\Gamma)$
interchangeably.
\end{uw}

\begin{uw}
Consider the very special case where $L=U$. If $(Y,\Gamma)$ is a solution of RBSDE$^T(\xi,f,L,U)$, then of course
\[
Y_t=L_t,\qquad \Gamma_t=-\int_0^t f(r,L_r)\,dr-L_t+L_0,\quad t\in [0,T],
\]
and $\gamma_\tau=\tau$ for every $\tau\in\TT$.

\end{uw}

\subsection{Existence,  uniqueness and approximation of solutions}

Let us consider the following hypotheses:
\begin{enumerate}
\item[(H1)] $E|\xi|<\infty$  and there exists a c\`adl\`ag process $S$, which is a difference
of supermartingales of class (D), such that $E\int_0^T|f(r,S_r)|\,dr<\infty$.

\item[(H2)]  there exists $\mu\in\BR$ such that for a.e. $t\in [0,T]$,
\[
(y-y')(f(t,y)-f(t,y'))\le\mu |y-y'|^2,\quad y,y'\in\BR,\quad \mbox{a.s.}
\]
\item[(H3)]For a.e. $t\in [0,T]$ the function $y\mapsto f(t,y)$ is continuous a.s.

\item[(H4)] For every $y\in\BR$,  $\int_0^T|f(r,y)|\,dr<\infty$ a.s.
\end{enumerate}

We start with a comparison result.
\begin{tw}
\label{tw.unq}
 Assume that $\xi_1\le\xi_2$, $L^1_t\le L^2_t,\, U^1_t\le U^2_t$, $t\in [0,T]$, and  for a.e. $t\in [0,T]$ we have $f^1(t,y)\le f^2(t,y)$ for all $y\in\BR$. Assume also that $f^1$ satisfies \mbox{\rm(H2)}. Let $(Y^i,\Gamma^i)$, $i=1,2$, be a solution to {\rm RBSDE}$^T(\xi^i,f^i,L^i,U^i)$. Then
\[
Y^1_t\le Y^2_t,\quad t\in [0,T].
\]
\end{tw}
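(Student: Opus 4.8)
The plan is to carry the classical Tanaka--Meyer comparison argument for reflected BSDEs over to the non-semimartingale setting, running it on the random intervals on which \emph{both} solutions are special semimartingales and then closing it with a backward Gronwall estimate. Put $\delta Y=Y^1-Y^2$, $\delta Y^{+}=(Y^1-Y^2)^{+}$; since $Y^1,Y^2$ are c\`adl\`ag of class (D), it suffices to prove $\varphi(t):=E\,\delta Y^{+}_t=0$ for every $t\in[0,T]$, for then right-continuity gives $Y^1\le Y^2$ on $[0,T]$. Denote by $\gamma^i_\tau$ the stopping time of the $\ell$-system associated with $(L^i,U^i)$ and set $\hat\gamma_\tau=\gamma^1_\tau\wedge\gamma^2_\tau$, $\hat\Lambda_\tau=\bigl(\{L^1_{\hat\gamma_\tau-}=U^1_{\hat\gamma_\tau-}\}\cup\{L^2_{\hat\gamma_\tau-}=U^2_{\hat\gamma_\tau-}\}\bigr)\cap\{\tau<\hat\gamma_\tau\}$. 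As in the discussion following the definition of the $\ell$-system associated with $L,U$, $(\hat\gamma_\tau)_{\hat\Lambda_\tau}$ is predictable; fix announcing times $\tau\le\hat\gamma^k_\tau\nearrow\hat\gamma_\tau$ with $\hat\gamma^k_\tau=\hat\gamma_\tau$ off $\hat\Lambda_\tau$ and $\hat\gamma^k_\tau<\hat\gamma_\tau$ on $\hat\Lambda_\tau$. The first step, elementary but a little technical, is to check that $[\tau,\hat\gamma^k_\tau]\subset[\tau,\gamma^i_\tau\}$ for $i=1,2$ and all $\tau,k$; the only delicate case is $\hat\gamma^k_\tau=\hat\gamma_\tau=\gamma^i_\tau$ off $\hat\Lambda_\tau$, which forces $L^i_{\gamma^i_\tau-}\neq U^i_{\gamma^i_\tau-}$, i.e.\ $\tau\notin\Lambda^i_\tau$, so that $[\tau,\gamma^i_\tau\}$ is then the closed interval $[\tau,\gamma^i_\tau]$. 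Hence $Y^1,Y^2$, and so $\delta Y$, are special semimartingales on each $[\tau,\hat\gamma^k_\tau]$, the Meyer--Tanaka formula applies to $\delta Y^{+}$ there, and by uniqueness of the Doob--Meyer decomposition the predictable finite variation part of $\Gamma^i$ on $[\tau,\hat\gamma^k_\tau]$ equals the restriction of $\Gamma^{i,v}(\tau)$, so that condition (d) persists in the form $\int_\tau^{\hat\gamma^k_\tau}(Y^i_{r-}-L^i_{r-})\,d\Gamma^{i,v,+}_r(\tau)=\int_\tau^{\hat\gamma^k_\tau}(U^i_{r-}-Y^i_{r-})\,d\Gamma^{i,v,-}_r(\tau)=0$.

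Now fix $t$. Using $dY^i_r=-f^i(r,Y^i_r)\,dr-d\Gamma^i_r$, the decomposition $\Gamma^i=\Gamma^{i,v}(t)+\Gamma^{i,m}(t)$ on $[t,\hat\gamma^k_t]$, and the Meyer--Tanaka formula (whose local-time and jump part is increasing and may be dropped), one obtains pathwise, for every $k$,
\[
\delta Y^{+}_t\ \le\ \delta Y^{+}_{\hat\gamma^k_t}
+\int_t^{\hat\gamma^k_t}\mathbf{1}_{\{\delta Y_{r-}>0\}}\bigl(f^1(r,Y^1_r)-f^2(r,Y^2_r)\bigr)\,dr
+\int_t^{\hat\gamma^k_t}\mathbf{1}_{\{\delta Y_{r-}>0\}}\,d\bigl(\Gamma^{1,v}(t)-\Gamma^{2,v}(t)\bigr)_r
+N_{\hat\gamma^k_t},
\]
with $N$ a local martingale on $[t,\hat\gamma^k_t]$, $N_t=0$. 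The reflection term is $\le0$: on $\{d\Gamma^{1,v,+}(t)>0\}$ we have $Y^1_{r-}=L^1_{r-}\le L^2_{r-}\le Y^2_{r-}$ and on $\{d\Gamma^{2,v,-}(t)>0\}$ we have $Y^2_{r-}=U^2_{r-}\ge U^1_{r-}\ge Y^1_{r-}$, so on $\{\delta Y_{r-}>0\}$ neither $d\Gamma^{1,v,+}(t)$ nor $d\Gamma^{2,v,-}(t)$ charges, while $-d\Gamma^{1,v,-}_r(t)-d\Gamma^{2,v,+}_r(t)\le0$. By \mbox{\rm(H2)} for $f^1$ together with $f^1\le f^2$, on $\{\delta Y_r>0\}$ one has $f^1(r,Y^1_r)-f^2(r,Y^2_r)\le f^1(r,Y^1_r)-f^1(r,Y^2_r)\le\mu\,\delta Y^{+}_r$, so (as $\{r:\delta Y_{r-}\neq\delta Y_r\}$ is Lebesgue-null) the running $dr$-integral above is $\le|\mu|\int_t^{T}\delta Y^{+}_r\,dr$, and $E\!\int_t^T\delta Y^{+}_r\,dr=\int_t^T\varphi(r)\,dr<\infty$ since $Y^1,Y^2$ are of class (D). Stopping $N$ along a chain of $[t,\hat\gamma^k_t]$ and combining these two facts with the class (D) property, we may take expectations to get $\varphi(t)\le E\,\delta Y^{+}_{\hat\gamma^k_t}+|\mu|\int_t^{T}\varphi(r)\,dr$.

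It remains to let $k\to\infty$ and dispose of the boundary term. We have $\delta Y^{+}_{\hat\gamma^k_t}\to\delta Y^{+}_{\hat\gamma_t-}\mathbf{1}_{\hat\Lambda_t}+\delta Y^{+}_{\hat\gamma_t}\mathbf{1}_{\hat\Lambda_t^{c}}$ a.s., and I claim this limit is $0$ a.s. On $\hat\Lambda_t$, say $L^1_{\hat\gamma_t-}=U^1_{\hat\gamma_t-}$; taking left limits in condition (c) gives $Y^1_{\hat\gamma_t-}=L^1_{\hat\gamma_t-}\le L^2_{\hat\gamma_t-}\le Y^2_{\hat\gamma_t-}$, hence $\delta Y^{+}_{\hat\gamma_t-}=0$ (symmetrically if instead $L^2_{\hat\gamma_t-}=U^2_{\hat\gamma_t-}$, using the upper barriers). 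On $\hat\Lambda_t^{c}$, inspecting which of the two defining infima realizes $\hat\gamma_t=\gamma^1_t\wedge\gamma^2_t$ shows that either $\hat\gamma_t=T$, whence $\delta Y^{+}_{\hat\gamma_t}=(\xi^1-\xi^2)^{+}=0$ (recall $Y^i_T=\xi^i$), or $\hat\gamma_t<T$ and $L^j_{\hat\gamma_t}=U^j_{\hat\gamma_t}$ for some $j\in\{1,2\}$ — the infimum involving left limits cannot realize $\hat\gamma_t$ off $\hat\Lambda_t$ — and then $Y^1_{\hat\gamma_t}\le L^2_{\hat\gamma_t}\le Y^2_{\hat\gamma_t}$ if $j=1$, resp.\ $Y^1_{\hat\gamma_t}\le U^1_{\hat\gamma_t}\le U^2_{\hat\gamma_t}=Y^2_{\hat\gamma_t}$ if $j=2$; in all cases $\delta Y^{+}_{\hat\gamma_t}=0$. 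Passing to the limit in expectation (class (D) again) we arrive at $\varphi(t)\le|\mu|\int_t^{T}\varphi(r)\,dr$ with $\varphi$ bounded and measurable, so $\varphi\equiv0$ by the backward Gronwall lemma, which completes the proof. I expect the main obstacle to be precisely the bookkeeping of the first paragraph — showing that the passage to the common intervals $[\tau,\hat\gamma^k_\tau]$ preserves both the special-semimartingale property and the reflection conditions (d) of each solution — together with the case analysis pinning down why $\delta Y^{+}$ must vanish at the (possibly predictable) endpoint $\hat\gamma_t$.
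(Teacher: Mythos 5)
Your proof is correct and follows the paper's strategy in all essentials: Tanaka--Meyer on announced subintervals of $[\tau,\gamma^1_\tau\wedge\gamma^2_\tau\}$, the sign analysis of the reflection measures via condition (d) and the ordering of the barriers, a pathwise case analysis showing that $\delta Y^+$ evaluated at the announcing times tends to $0$, and a backward Gronwall argument. Two of your choices differ from the paper's and are worth recording. First, the paper takes the common announcing sequence to be $\gamma^{1,k}_\tau\wedge\gamma^{2,k}_\tau$, the minimum of the two individual announcing sequences; this makes the special-semimartingale property on $[\tau,\gamma^{1,k}_\tau\wedge\gamma^{2,k}_\tau]$ immediate (it is a stopped subinterval of $[\tau,\gamma^{i,k}_\tau]$), whereas your union-based $\hat\gamma^k_\tau$ requires you to check that $(\hat\gamma_\tau)_{\hat\Lambda_\tau}$ is predictable and to add a (routine but unstated) localization to get the semimartingale property on $[\tau,\hat\gamma^k_\tau]$, since $\hat\gamma^k_\tau$ is only dominated by $\gamma^{i,j}_\tau$ for an $\omega$-dependent $j$; taking $\hat\gamma^k_\tau:=\gamma^{1,k}_\tau\wedge\gamma^{2,k}_\tau$ removes both issues and your argument goes through verbatim. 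Second, your Gronwall step is genuinely simpler than the paper's: you bound $E\int_\tau^{\hat\gamma_\tau}\delta Y^+_r\,dr$ crudely by $\int_t^a E\,\delta Y^+_{r\wedge T}\,dr$ (finite by Fubini and class (D)) and run a single global backward Gronwall on $\varphi(t)=E\,\delta Y^+_{t\wedge T}$, concluding at fixed times and by right-continuity; the paper instead keeps the integral localized to $[\tau,\gamma^1_\tau\wedge\gamma^2_\tau)$, which obliges it to verify that $\gamma^1_\sigma\wedge\gamma^2_\sigma=\gamma^1_\tau\wedge\gamma^2_\tau$ for intermediate $\sigma$ (see (\ref{eq3.4})--(\ref{eq3.3ab})) before applying Gronwall to $(Y^1-Y^2)^+\mathbf{1}_{[\tau,\gamma^1_\tau\wedge\gamma^2_\tau)}$, and then to invoke the Section Theorem. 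Your shortcut is legitimate in the bounded-$T$ setting of Section \ref{sec3} and spares that extra lemma.
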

\begin{proof}
Let $\tau\in\TT$ and $(\gamma_\tau^1,\Lambda^1_\tau), (\gamma_\tau^2,\Lambda^2_\tau)$ be defined by (\ref{eq2.1})
but with $L,U$ replaced by $L^1,U^1$ and $L^2,U^2$, respectively. Let $\{\gamma^{1,k}_\tau\},\, \{\gamma^{2,k}_\tau\}$
be the sequences constructed as in (\ref{eq2.3cdf}) but for $\gamma_{\tau}$ replaced by $\gamma^1_\tau$ and $\gamma^2_\tau$, respectively.
By the definition, $Y^i$ is a special semimartingale on $[\tau,\gamma^i_\tau\},\, i=1,2.$ In particular, $Y^1,Y^2$
are special semimartingales on $[\tau,\gamma^{1,k}_\tau\wedge\gamma^{2,k}_\tau]$. By the Tanaka-Meyer
formula and (H2),
\begin{align*}
E(Y^1_\tau-Y^2_\tau)^+&\le
E(Y^1_{\gamma^{1,k}_\tau\wedge\gamma^{2,k}_\tau}-Y^2_{\gamma^{1,k}_\tau\wedge\gamma^{2,k}_\tau})^+\\
&\nonumber\quad +E\int_\tau^{\gamma^{1,k}_\tau\wedge\gamma^{2,k}_\tau}\mbox{sgn}(Y^1_{r-}-Y^2_{r-})\,d(\Gamma^{1,v}_r(\tau)
-\Gamma^{2,v}_r(\tau))\\
&\quad\nonumber
+\mu^+E\int_\tau^{\gamma^{1,k}_\tau\wedge\gamma^{2,k}_\tau}(Y^1_r-Y^2_r)^+\,dr.
\end{align*}
Hence, by condition (d) of Definition \ref{df.main},
\begin{align}
\label{eq3.1}
E(Y^1_\tau-Y^2_\tau)^+&\le
E(Y^1_{\gamma^{1,k}_\tau\wedge\gamma^{2,k}_\tau}-Y^2_{\gamma^{1,k}_\tau\wedge\gamma^{2,k}_\tau})^+
+\mu^+E\int_\tau^{\gamma^{1}_\tau\wedge\gamma^{2}_\tau}|Y^1_r-Y^2_r|\,dr.
\end{align}
We will show that
\begin{equation}
\label{eq3.2}
\lim_{k\rightarrow \infty}(Y^1_{\gamma^{1,k}_\tau\wedge\gamma^{2,k}_\tau}-Y^2_{\gamma^{1,k}_\tau\wedge\gamma^{2,k}_\tau})^+=0.
\end{equation}
The reasoning below is for fixed $\omega\in\Omega$. We consider several cases.

Case I: $\gamma^1_\tau=\tau\,\mbox{or}\, \gamma^2_\tau=\tau$.
Then $\gamma^{1,k}_\tau\wedge\gamma^{2,k}_\tau=\tau$. If $\tau<T$, then $Y^1_\tau=L^1_\tau$
or $Y^2_\tau=U^2_\tau$. In both cases (\ref{eq3.2}) is satisfied. If $\tau=T$, then the
limit in (\ref{eq3.2}) equals
$(\xi_1-\xi_2)^+$, so (\ref{eq3.2}) is satisfied  by the assumptions.

Case II: $\gamma^1_\tau>\tau\, \mbox{and}\, \gamma^2_\tau>\tau$.
We divide the proof into several sub-cases.

Case II(a): $\gamma^1_\tau<\gamma^2_\tau$.
First suppose that
there exists $k_0$ such that $\gamma^{1,k}_\tau\wedge\gamma^{2,k}_\tau=\gamma^1_\tau,\, k\ge k_0$. Then $\gamma^{1,k}_\tau=\gamma^1_\tau,\, k\ge k_0$. Hence $\omega\notin\Lambda^1_\tau$, which implies that $L^1_{\gamma^1_\tau}=U^1_{\gamma^1_\tau}$. Hence we get
easily (\ref{eq3.2}). Suppose now that
$\gamma^{1,k}_\tau\wedge\gamma^{2,k}_\tau<\gamma^1_\tau,\, k\ge 1$.
Then $\gamma^{1,k}_\tau<\gamma^1_\tau,\, k\ge 1$, which implies that $\omega\in\Lambda^1_\tau$.
Thus $L^1_{\gamma^1_\tau-}=U^1_{\gamma^1_\tau-}$. Therefore
\[
(Y^1_{\gamma^{1,k}_\tau\wedge\gamma^{2,k}_\tau}-Y^2_{\gamma^{1,k}_\tau\wedge\gamma^{2,k}_\tau})^+
\rightarrow (Y^1_{\gamma^1_\tau-}-Y^2_{\gamma^1_\tau-})^+=(L^1_{\gamma^1_\tau-}-Y^2_{\gamma^1_\tau-})^+=0.
\]

Case II(b): $\gamma^1_\tau>\gamma^2_\tau.$ The proof is analogous to that in Case II(a).

Case II(c): $\gamma^1_\tau=\gamma^2_\tau<T$. First suppose that
$\gamma^{1,k}_\tau\wedge\gamma^{2,k}_\tau<\gamma^1_\tau=\gamma^2_\tau$, $k\ge 1.$
Then $\gamma^{1,k}_\tau<\gamma^1_\tau,\, k\ge 1$ or $\gamma^{2,k}_\tau<\gamma^1_\tau,\, k\ge 1$, which
implies that $\omega\in \Lambda^1_\tau\cup \Lambda^2_\tau$ or equivalently $L^1_{\gamma^1_\tau-}=U^1_{\gamma^1_\tau-}\,\vee\, L^2_{\gamma^2_\tau-}=U^2_{\gamma^2_\tau-}$. Therefore
\[
(Y^1_{\gamma^{1,k}_\tau\wedge\gamma^{2,k}_\tau}-Y^2_{\gamma^{1,k}_\tau\wedge\gamma^{2,k}_\tau})^+
\rightarrow (Y^1_{\gamma^1_\tau-}-Y^2_{\gamma^1_\tau-})^+=(Y^1_{\gamma^2_\tau-}-Y^2_{\gamma^2_\tau-})^+.
\]
If $\omega\in\Lambda^1_\tau$, then $(Y^1_{\gamma^1_\tau-}-Y^2_{\gamma^1_\tau-})^+=(L^1_{\gamma^1_\tau-}-Y^2_{\gamma^1_\tau-})^+=0$. If $\omega\in\Lambda^2_\tau$, then $(Y^1_{\gamma^2_\tau-}-Y^2_{\gamma^2_\tau-})^+=(Y^1_{\gamma^2_\tau-}-U^2_{\gamma^2_\tau-})^+=0$.

Case II(d): $ \gamma^1_\tau=\gamma^2_\tau=T$. If there exists $k_0\in\BN$ such that
$\gamma^{1,k}_\tau\wedge \gamma^{2,k}_\tau=T$, then $(Y^1_{\gamma^{1,k}_\tau\wedge\gamma^{2,k}_\tau}-Y^2_{\gamma^{1,k}_\tau\wedge\gamma^{2,k}_\tau})^+=(\xi_1-\xi_2)^+=0,\, k\ge k_0$. If $\gamma^{1,k}_\tau\wedge \gamma^{2,k}_\tau<T,\, k\ge 1$, then $\omega\in \Lambda^1_\tau\cup \Lambda^2_\tau$ or equivalently $L^1_{T-}=U^1_{T-}\,\vee\, L^2_{T-}=U^2_{T-}$. If $\omega\in\Lambda^1_\tau$, then $(Y^1_{T-}-Y^2_{T-})^+=(L^1_{T-}-Y^2_{T-})^+=0$. If $\omega\in\Lambda^2_\tau$, then $(Y^1_{T-}-Y^2_{T-})^+=(Y^1_{T-}-U^2_{T-})^+=0$.
We have showed that (\ref{eq3.2}) is satisfied. Since $Y^1,Y^2$ are of class (D), it follows from (\ref{eq3.2}) that
\begin{equation}
\label{eq3.starl}
\lim_{k\rightarrow \infty} E(Y^1_{\gamma^{1,k}_\tau\wedge\gamma^{2,k}_\tau}-Y^2_{\gamma^{1,k}_\tau\wedge\gamma^{2,k}_\tau})^+=0.
\end{equation}
By this and (\ref{eq3.1}),
\begin{align}
\label{eq3.3}
E(Y^1_\tau-Y^2_\tau)^+\le
\mu^+E\int_\tau^{\gamma^{1}_\tau\wedge\gamma^{2}_\tau}(Y^1_r-Y^2_r)^+\,dr.
\end{align}
Let $a\ge 0$ be such that $T\le a$. Put $Y^1_t=\xi_1,\, Y^2_t=\xi_2,\, t\ge T$.
Then from (\ref{eq3.3}) we conclude that for every stopping time $\tau\le a$,
\begin{align*}
E(Y^1_\tau-Y^2_\tau)^+=E(Y^1_{\tau\wedge T}-Y^2_{\tau\wedge T})^+
&\le \mu^+E\int_{\tau\wedge T}^{a}(Y^1_r-Y^2_r)^+\,dr
\\
&=  \mu^+E\int_{\tau}^{a}(Y^1_r-Y^2_r)^+\,dr.
\end{align*}
Applying Gronwall's lemma yields $Y^1_t\le Y^2_t,\, t\in [0,T]$.
\end{proof}

\begin{wn}
Assume that \mbox{\rm(H2)} is satisfied. Then there exists at most one solution of {\rm RBSDE}$^T(\xi,f,L,U)$.
\end{wn}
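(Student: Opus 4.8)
The plan is to deduce uniqueness directly from the comparison result, Theorem \ref{tw.unq}, by applying it twice with the roles of the two solutions interchanged. Suppose $(Y^1,\Gamma^1)$ and $(Y^2,\Gamma^2)$ are both solutions of RBSDE$^T(\xi,f,L,U)$. To invoke Theorem \ref{tw.unq} I take $\xi_1=\xi_2=\xi$, $f^1=f^2=f$, $L^1=L^2=L$ and $U^1=U^2=U$; then all the ordering hypotheses ($\xi_1\le\xi_2$, $L^1_t\le L^2_t$, $U^1_t\le U^2_t$, and $f^1(t,y)\le f^2(t,y)$ for a.e.\ $t$ and all $y$) hold trivially as equalities, and the structural hypothesis that $f^1$ satisfies (H2) is exactly our standing assumption. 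Hence Theorem \ref{tw.unq} gives $Y^1_t\le Y^2_t$ for $t\in[0,T]$.

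Next I apply Theorem \ref{tw.unq} a second time, now treating $(Y^2,\Gamma^2)$ as the ``first'' solution and $(Y^1,\Gamma^1)$ as the ``second'' one; the data are again identical, $f$ still satisfies (H2), so the theorem yields $Y^2_t\le Y^1_t$ for $t\in[0,T]$. Combining the two inequalities gives $Y^1_t=Y^2_t$ for all $t\in[0,T]$. Finally, since $\Gamma$ is determined by $Y$ through $\Gamma_t=Y_0-Y_t-\int_0^t f(r,Y_r)\,dr$ (the Remark following Definition \ref{df.main}), we also get $\Gamma^1=\Gamma^2$, which establishes that the solution is unique.

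There is essentially no obstacle here: the content of the corollary is entirely carried by Theorem \ref{tw.unq}, and the only thing to check is the bookkeeping that all of its hypotheses are met in both applications — which is immediate because the data coincide and (H2) is symmetric in the sense that it is a property of the single generator $f$ used in both equations. One could phrase the whole argument in a single line, but spelling out the two symmetric applications makes the logical structure transparent.

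\begin{dow}
Let $(Y^1,\Gamma^1)$ and $(Y^2,\Gamma^2)$ be two solutions of RBSDE$^T(\xi,f,L,U)$. Applying Theorem \ref{tw.unq} with $\xi_1=\xi_2=\xi$, $f^1=f^2=f$, $L^1=L^2=L$, $U^1=U^2=U$ (the hypotheses hold trivially, and $f$ satisfies (H2)) gives $Y^1_t\le Y^2_t$, $t\in[0,T]$. Interchanging the roles of the two solutions and applying Theorem \ref{tw.unq} again gives $Y^2_t\le Y^1_t$, $t\in[0,T]$. Hence $Y^1=Y^2$, and consequently $\Gamma^1=\Gamma^2$ by the Remark following Definition \ref{df.main}.
\end{dow}
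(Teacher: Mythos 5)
Your argument is correct and is exactly the intended one: the corollary follows immediately from two symmetric applications of the comparison theorem (Theorem \ref{tw.unq}) with identical data, which is why the paper states it without a separate proof. The final remark that $\Gamma$ is determined by $Y$ is also the right way to conclude uniqueness of the full pair.
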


\begin{tw}
\label{tw4.1}
Assume that \mbox{\rm(H1)--(H4)} are satisfied.
\begin{enumerate}
\item[\rm(i)] There exists a unique solution $(Y,\Gamma)$ of {\rm RBSDE}$^T(\xi,f,L,U)$.

\item[\rm(ii)] Let $\{\xi_n\}$ be a sequence of integrable $\FF_T$-measurable random variables
such that $\xi_n\nearrow \xi$, and let
\[
f_n(t,y)=f(t,y)+n(y-L_t)^-.
\]
Then  for each $n\in\BN$ there exists a unique solution $(Y^n,M^n,A^n)$ of the equation $\overline{R}BSDE^T(\xi_n,f_n,U)$, and moreover, $Y^n_t\nearrow Y_t,\, \Gamma^n_t\rightarrow \Gamma_t$, $t\in [0,T]$, where $\Gamma^n_t=\int_0^tn(Y^n_r-L_r)^-\,dr-A^n_t-M^n_t$, $t\in [0,T]$.
\end{enumerate}
\end{tw}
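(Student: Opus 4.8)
The plan is to deduce uniqueness for free from the comparison principle, and to build the solution as the monotone limit of the penalized equations, using the $\ell$-system associated with $L,U$ to localize everything onto random intervals on which the separated-barrier theory of Section~\ref{sec2} applies.

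First, uniqueness of a solution of RBSDE$^T(\xi,f,L,U)$ is immediate from Theorem~\ref{tw.unq} (and its corollary), since $f$ satisfies (H2); so it remains to produce a solution $(Y,\Gamma)$, and it suffices to produce the limit described in (ii). I would first check that for each fixed $n$ the data $(\xi_n,f_n,U)$ with $f_n(t,y)=f(t,y)+n(y-L_t)^-$ satisfy the hypotheses of the $\overline{R}$-version of Theorem~\ref{tw2.0}: (A2)--(A4) are clear for $f_n$, while (A1) holds with the same process $S$ as in (H1), because $E\int_0^T(S_r-L_r)^-\,dr<\infty$ (here $T$ is bounded and both $L^+$ and the supermartingale components of $S$ are of class (D), hence uniformly bounded in $L^1$ along the time axis); moreover $\xi_n\le\xi\le U_T$ and $U^-$ is of class (D). Thus a unique solution $(Y^n,M^n,A^n)$ of $\overline{R}$BSDE$^T(\xi_n,f_n,U)$ exists, $Y^n$ is of class (D), and $Y^n_t\le U_t$.

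Next, since $\xi_n\nearrow\xi$ and $f_n\le f_{n+1}$ (because $(y-L_t)^-\ge0$), the comparison principle for one-barrier reflected BSDEs gives $Y^n_t\le Y^{n+1}_t$. For a uniform class (D) upper bound, let $\widehat Y^n$ be the first component of the solution of BSDE$^T(\xi_n,f_n)$ (no upper reflection, hence $Y^n_t\le\widehat Y^n_t$) and let $\underline Y$ be the first component of the solution of $\underline{R}$BSDE$^T(\xi,f,L)$, which exists and is of class (D) by Theorem~\ref{tw2.0}; by Theorem~\ref{tw2.0}(ii) applied with terminal values $\xi_n\nearrow\xi$ one has $\widehat Y^n_t\nearrow\underline Y_t$, so $Y^n_t\le\underline Y_t$. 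Together with the class (D) lower bound $Y^1_t\le Y^n_t$ ($Y^1$ being of class (D) by the previous paragraph), this shows $Y^n\nearrow Y$ pointwise with $Y$ of class (D) and $Y_t\le U_t$. The usual penalization estimate, controlling $\int_0^\cdot n(Y^n_r-L_r)^-\,dr$ by testing the equation for $Y^n$ against $(Y^n-L)^-$ and using $Y^n\le\widehat Y^n\nearrow\underline Y\ge L$, then gives $(Y^n_t-L_t)^-\to0$, so $L_t\le Y_t\le U_t$.

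The final and most delicate step is to identify $Y$ as the first component of a solution in the sense of Definition~\ref{df.main}. Fix $\tau\in\TT$ and let $\gamma_\tau,\Lambda_\tau$ be given by (\ref{eq2.1}) and $\gamma^k_\tau$ by (\ref{eq2.3}). By the definition of $\dot\gamma_\tau$, on the relevant part of $[\tau,\gamma_\tau\}$ one has $L_t<U_t$ and $L_{t-}<U_{t-}$, so on each $[\tau,\gamma^k_\tau]$ a special semimartingale lies between $L$ and $U$ (Remark~\ref{uw2.1}), the only issue being the right endpoint $\gamma_\tau$ on $\Lambda_\tau^c$, where $L$ and $U$ may coalesce by a jump; there $Y_{\gamma_\tau}=L_{\gamma_\tau}=U_{\gamma_\tau}$ is a fixed integrable value, which allows the argument to be closed. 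By the restriction property (as in Remark~\ref{uw.red}), the triple $(Y^n,M^n-M^n_\tau,A^n-A^n_\tau)$ restricted to $[\tau,\gamma^k_\tau]$ solves $\overline{R}$BSDE$^{\tau,\gamma^k_\tau}(Y^n_{\gamma^k_\tau},f_n,U)$, and since $Y^n_{\gamma^k_\tau}\nearrow Y_{\gamma^k_\tau}$, Theorem~\ref{tw2.1}(ii) identifies $\lim_n Y^n$ on $[\tau,\gamma^k_\tau]$ with the first component of the (unique, semimartingale) solution of RBSDE$^{\tau,\gamma^k_\tau}(Y_{\gamma^k_\tau},f,L,U)$, and also yields convergence of the associated finite-variation and martingale parts. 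Hence $Y$, and therefore $\Gamma_t:=Y_0-Y_t-\int_0^tf(r,Y_r)\,dr$, is a special semimartingale on every $[\tau,\gamma^k_\tau]$, hence on $[\tau,\gamma_\tau\}$, so $\Gamma$ is a special $\ell$-semimartingale; the minimality conditions of the local semimartingale solutions glue to give condition (d) of Definition~\ref{df.main}, the local convergences glue to give $\Gamma^n_t\to\Gamma_t$, and $Y_T=\xi$ by construction, which proves (i) and (ii).

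I expect the main obstacle to be exactly this last step: verifying that the hypotheses of Theorem~\ref{tw2.1} genuinely hold on the random intervals $[\tau,\gamma^k_\tau]$, in particular near the times where $L$ and $U$ touch (the phenomenon that forced the introduction of the $\ell$-system), and then checking that the semimartingale decompositions produced locally over the family $\{[\tau,\gamma^k_\tau]:\tau\in\TT,\ k\ge1\}$ are mutually consistent and assemble into a single globally defined special $\ell$-semimartingale $\Gamma$ satisfying (d). By contrast, the monotone limit and the class (D) sandwich of the first steps are fairly routine adaptations of the separated-barrier results of Section~\ref{sec2}.
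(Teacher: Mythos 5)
Your overall route is the paper's route: penalize from below while keeping the upper reflection, pass to the monotone limit, and then identify the limit locally on the intervals $[\tau,\gamma^k_\tau]$ via Theorem \ref{tw2.1}. The uniqueness part, the monotonicity $Y^n\le Y^{n+1}$, and the class (D) sandwich $Y^1\le Y^n\le\bar Y$ with $\bar Y$ the solution of $\underline{\mathrm{R}}$BSDE$^T(\xi,f,L)$ are all exactly as in the paper. However, there are two genuine gaps. First, your argument that $L\le Y$ does not work. You propose "the usual penalization estimate, testing the equation against $(Y^n-L)^-$"; this requires applying an It\^o--Tanaka formula to a function of $Y^n-L$, which is unavailable here because $L$ is merely a c\`adl\`ag process of class (D), not a semimartingale. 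Moreover, the inequality you invoke, $Y^n\le\widehat Y^n\nearrow\underline Y\ge L$, is an \emph{upper} bound on $Y^n$ and cannot produce the lower bound $Y\ge L$. The paper's device is different and is one of the key ideas of the proof: introduce the shifted barrier $L^\varepsilon=L-\varepsilon$ and the solutions $(Y^{n,\varepsilon},M^{n,\varepsilon},A^{n,\varepsilon})$ of $\overline{\mathrm{R}}$BSDE$^T(\xi_n,f_{n,\varepsilon},U)$ with $f_{n,\varepsilon}(t,y)=f(t,y)+n(y-L^\varepsilon_t)^-$. Comparison gives $Y^{n,\varepsilon}\le Y^n$, and since $L^\varepsilon<U$ and $L^\varepsilon_-<U_-$ everywhere, Remark \ref{uw2.1} guarantees Mokobodzki's condition, so Theorem \ref{tw2.1} yields $Y^{n,\varepsilon}\nearrow Y^\varepsilon$ with $(Y^\varepsilon,M^\varepsilon,R^\varepsilon)$ a semimartingale solution of RBSDE$^T(\xi,f,L^\varepsilon,U)$; hence $Y\ge Y^\varepsilon\ge L-\varepsilon$, and $\varepsilon\downarrow0$ concludes.

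Second, you never establish that the pointwise monotone limit $Y$ is c\`adl\`ag, and this cannot be skipped: without it $\Gamma$ is not even a c\`adl\`ag process and condition (a) of Definition \ref{df.main} is meaningless. The local identification on $[\tau,\gamma^k_\tau]$ only controls $Y$ away from the coincidence set, and the delicate points are precisely the times where $L$ and $U$ (or their left limits) touch. The paper argues right-continuity at an arbitrary $\tau\in\TT$ by splitting into $\tau<\gamma_\tau$ (use the local semimartingale solution) and $\tau=\gamma_\tau$ (then $L_\tau=U_\tau=Y_\tau$ and squeeze by right-continuity of $L,U$), and obtains existence of left limits along increasing sequences $\tau_n\uparrow\tau$ by splitting on $\{L_{\tau-}=U_{\tau-}\}$ (squeeze) versus $\{L_{\tau-}<U_{\tau-}\}$ (where $[\tau_{n_\omega},\tau]\subset[\tau_{n_\omega},\gamma_{\tau_{n_\omega}}\}$ for large $n_\omega$); both directions are then upgraded to statements about the whole trajectory via the section theorem \cite[IV.T28]{Dellacherie}. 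Your final step (localization via the $\ell$-system, identification of $\Gamma$ on $[\tau,\gamma^k_\tau]$ and the minimality condition (d)) matches the paper once these two gaps are filled.
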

\begin{proof}
By  Proposition \ref{tw2.0}, for every $n\ge 1$ there exists a unique solution $(Y^n,M^n,A^n)$ of $\overline{{\rm{R}}}$BSDE$^T(\xi_n,f_n,U)$. By \cite[Proposition 2.1]{K:SPA2}, $Y^n\le Y^{n+1}$. Set
\[
Y_t=\lim_{n\rightarrow \infty}Y^n_t,\quad t\in [0,T].
\]
By \cite[Proposition 2.1]{K:SPA2} $Y^n\le \bar Y^n$, where
$(\bar Y^n,\bar M^n)$ is the solution of the problem BSDE$^T(\xi_n,f_n)$. By Proposition \ref{tw2.0}, $\bar Y^n\nearrow \bar Y$,
where $(\bar Y,\bar M,\bar K)$ is the solution of ${\rm{\underline{R}}}$BSDE$^T(\xi,f,L)$. Hence  $Y^1\le Y^n\le \bar Y$, $n\ge 1$, so $Y$ is of class (D).
By Proposition \ref{tw2.0}, for all $\varepsilon >0$ and $n\ge 1$ there exists a unique solution $(Y^{n,\varepsilon},M^{n,\varepsilon},A^{n,\varepsilon})$ of $\overline{{\rm{R}}}$BSDE$^T(\xi_n,f_{n,\varepsilon},U)$ with
\[
f_{n,\varepsilon}(t,y)=f(t,y)+n(y-L_t^\varepsilon)^-,\quad L^\varepsilon=L-\varepsilon.
\]
By \cite[Proposition 2.1]{K:SPA2}, $Y^{n,\varepsilon}\le Y^n$, while by Proposition \ref{tw2.1} and Remark \ref{uw2.1},
$Y^{n,\varepsilon}_t\nearrow Y^\varepsilon_t,\, t\in [0,T]$, where the triple $(Y^\varepsilon,M^\varepsilon,R^\varepsilon)$
is the unique solution to the problem RBSDE$^T(\xi,f,L^\varepsilon,U)$. Therefore $L^\varepsilon\le Y$, and since $\varepsilon>0$
was arbitrary, $L\le Y$. Of course $Y\le U$. Now we will show that $Y$ is c\`adl\`ag.
Let $\tau\in\TT$.
Applying Proposition \ref{tw2.1} (see also Remark \ref{uw2.1}, Remark \ref{uw.red}) on $[\tau,\gamma^k_\tau]$ (see (\ref{eq2.1}), (\ref{eq2.3cdf}) for the definition of $\gamma_\tau, \gamma^k_\tau$) with $\hat\xi^n= Y^n_{\gamma^k_\tau}$, we see that $Y$ is c\`adl\`ag on $[\tau,\gamma_\tau\}$. If $\tau<\gamma_\tau$, then we get that $Y$ is right-continuous in $\tau$, and
if $\tau=\gamma_\tau$, then $L_\tau=U_\tau=Y_\tau$, so $Y$ is right-continuous in $\tau$ by the right-continuity
of $L,U$ and the fact that $L\le Y\le U$. Hence, by \cite[IV.T28]{Dellacherie}, $Y$ is right-continuous on $[0,T]$.
Now let $\{\tau_m\}\subset \TT$ be an increasing sequence and $\tau:=\sup_{m\ge 1}\tau_m$.
It is clear that on the set $\{\omega\in\Omega;\,\tau_m(\omega)=\tau(\omega),\, m\ge m_\omega\}\cup\{L_{\tau-}=U_{\tau-}\}$ the limit $\lim_{m\rightarrow \infty}Y_{\tau_m}$ exists. Now we will show that this limit exists on the set
\[
A=\{\tau_m<\tau,\,m\ge 1\}\cap\{L_{\tau-}<U_{\tau-}\}.
\]
Applying Proposition \ref{tw2.1} (see also Remark \ref{uw2.1}, Remark \ref{uw.red}) on the interval
$[\tau_{m},\gamma^k_{\tau_m}]$  with $\hat\xi^n=Y^n_{\gamma^k_{\tau_m}}$ for every $k\ge 1$ we see that $Y$ is c\`adl\`ag on $[\tau_m,\gamma_{\tau_m}\}$. Since $A\subset \{L_{\tau-}<U_{\tau-}\}$, for every $\omega\in A$ there exists $m_\omega$ such that
\[
[\tau_{m_\omega}(\omega),\tau(\omega)]\subset [\tau_{m_\omega}(\omega),\gamma_{\tau_{m_\omega}}(\omega)\}.
\]
Therefore $\lim_{m\rightarrow \infty}Y_{\tau_m}$ exists on $A$. Summing up, we have that $\lim_{m\rightarrow \infty}Y_{\tau_m}$ exists a.s., so again by  \cite[IV.T28]{Dellacherie}, $Y$  has left limits on $[0,T]$.
Set
\[
\Gamma^n_t=\int_0^tn(Y^n_r-L_r)^-\,dr-A^n_t-M^n_t,\quad t\in [0,T].
\]
It is clear that
\[
Y^n_t=\xi_n+\int_t^T f(r,Y^n_r)\,dr+\int_t^T\,d\Gamma^n_r,\quad t\in [0,T].
\]
By (H2) and (H4) we may pass to the limit in the above equation. We then get 
condition (c) of the definition of RBSDE$^T(\xi,f,L,U)$
with $\Gamma_t= -Y_t+Y_0-\int_0^t f(r,Y_r)\,dr$. Let $\tau\in\TT$. Applying Proposition \ref{tw2.1} (see also Remark \ref{uw.red}) on $[\tau,\gamma_{\tau}^k],\, k\ge 1$ with $\hat\xi^n=Y^n_{\gamma_\tau^k}$ (see also Remark \ref{uw2.1}) we see that $\Gamma$ is a special semimartingale on $[\tau,\gamma_\tau\}$ and
\[
\int_{[\tau,\gamma_\tau\}}(Y_{r-}-L_{r-})\,d\Gamma^{v,+}_r(\tau)=\int_{[\tau,\gamma_\tau\}}(U_{r-}-Y_{r-})\,d\Gamma^{v,-}_r(\tau)=0,
\]
which completes the proof.
\end{proof}

\begin{wn}
\label{wn.pna}
Assume that \mbox{\rm (H1)--(H4)} are satisfied. Let $(Y,\Gamma)$ be a solution of {\rm RBSDE}$^T(\xi,f,L,U)$ and $(Y^n,M^n)$ be a solution of {\rm BSDE}$^T(\xi,f_n)$ with
\[
f_n(t,y)=f(t,y)+n(y-L_t)^--n(y-U_t)^+.
\]
Then $Y^n_t\rightarrow Y_t$, $t\in [0,T]$.
\end{wn}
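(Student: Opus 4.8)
The plan is to squeeze $Y^{n}$ between two one-barrier penalization sequences that are both monotone and both converge to $Y$: the upper one comes straight from Theorem \ref{tw4.1}(ii), the lower one from its symmetric counterpart.

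First I would recall the scheme of Theorem \ref{tw4.1}(ii): taking $\xi_{n}\equiv\xi$ there, it gives solutions $(\overline Y^{n},\overline M^{n},\overline A^{n})$ of $\uR$BSDE$^{T}(\xi,g_{n},U)$ with $g_{n}(t,y)=f(t,y)+n(y-L_{t})^{-}$, and $\overline Y^{n}_{t}\nearrow Y_{t}$, $t\in[0,T]$. Symmetrically, I would apply Theorem \ref{tw4.1} to the data $(-\xi,\tilde f,-U,-L)$ with $\tilde f(t,y)=-f(t,-y)$: its unique solution is $(-Y,-\Gamma)$, since $\{-U=-L\}=\{L=U\}$ so the associated $\ell$-system is unchanged and conditions (a)--(d) of Definition \ref{df.main} are preserved under this sign change. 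Translating the conclusion of Theorem \ref{tw4.1}(ii) back through the definition of $\dR$BSDE, I would get solutions $(\underline Y^{n},\underline M^{n},\underline K^{n})$ of $\dR$BSDE$^{T}(\xi,h_{n},L)$ with $h_{n}(t,y)=f(t,y)-n(y-U_{t})^{+}$, and $\underline Y^{n}_{t}\searrow Y_{t}$, $t\in[0,T]$. Along the way I would check that $g_{n},h_{n}$ (and their sign-flipped versions) satisfy (H1)--(H4): monotonicity in $y$ and continuity are clear, and the integrability conditions hold because $T$ is bounded, hence $L,U$ are pathwise bounded on $[0,T]$, while $L^{+},U^{-}$ and the dominating process $S$ of (H1) are of class (D); thus Theorems \ref{tw2.0} and \ref{tw4.1} do apply to these data.

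The heart of the argument is the sandwich $\overline Y^{n}_{t}\le Y^{n}_{t}\le\underline Y^{n}_{t}$, $t\in[0,T]$. For the left inequality I would, with $n$ fixed, use the penalization description of the one-barrier equation, i.e.\ the upper-barrier form of Theorem \ref{tw2.0}(ii) (obtained from it by the same sign flip): the solutions $Y^{n,m}$ of BSDE$^{T}(\xi,g_{n}(t,y)-m(y-U_{t})^{+})$ decrease as $m\to\infty$ to $\overline Y^{n}$, hence $\overline Y^{n}_{t}\le Y^{n,m}_{t}$ for every $m$; taking $m=n$ one has $g_{n}(t,y)-n(y-U_{t})^{+}=f_{n}(t,y)$, so by uniqueness of solutions of BSDEs $Y^{n,n}=Y^{n}$ and therefore $\overline Y^{n}\le Y^{n}$. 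The right inequality would be obtained the same way from Theorem \ref{tw2.0}(ii) itself: the solutions of BSDE$^{T}(\xi,h_{n}(t,y)+m(y-L_{t})^{-})$ increase to $\underline Y^{n}$, and again at $m=n$ the generator equals $f_{n}$, so $Y^{n}\le\underline Y^{n}$. Combining the sandwich with $\overline Y^{n}_{t}\nearrow Y_{t}$ and $\underline Y^{n}_{t}\searrow Y_{t}$ would give $Y^{n}_{t}\to Y_{t}$ for every $t\in[0,T]$.

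I expect the main obstacle to be purely bookkeeping: carefully matching the two one-barrier penalization schemes (Theorem \ref{tw2.0}(ii) and its mirror image) with the symmetric double penalization $f_{n}=f+n(\cdot-L)^{-}-n(\cdot-U)^{+}$, tracking the sign flips $\tilde f(t,y)=-f(t,-y)$ in barriers and generators, and verifying once and for all that $g_{n},h_{n}$ fall under (H1)--(H4). No estimate beyond the comparison and convergence results already established is needed.
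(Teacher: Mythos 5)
Your proposal is correct and follows essentially the same route as the paper: sandwich $Y^n$ between the two monotone one-barrier penalization sequences $\overline Y^n\nearrow Y$ and $\underline Y^n\searrow Y$ supplied by Theorem \ref{tw4.1}(ii) and its sign-flipped counterpart. The only (harmless) deviation is in the middle inequality $\overline Y^n\le Y^n\le\underline Y^n$, which the paper gets in one line from the comparison principle of \cite[Proposition 2.1]{K:SPA2} (using that $\overline Y^n\le U$ and $\underline Y^n\ge L$ kill the extra penalty terms), whereas you rederive it by a second layer of penalization in $m$ and set $m=n$.
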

\begin{proof}
By Theorem \ref{tw4.1}, $\overline{Y}^n\nearrow Y$, where $(\overline Y^n,\overline M^n,\overline A^n)$ is a solution of the equation $\overline{{\rm{R}}}$BSDE$^T(\xi,\overline f_n,U)$ with $\overline f_n(t,y)=f(t,y)+n(y-L_t)^-$. In  much the same manner one can show that $\underline{Y}^n\searrow Y$, where $(\underline Y^n,\underline M^n,\underline K^n)$ is a solution of $\underline{{\rm{R}}}$BSDE$^T(\xi,\underline f_n,U)$ with $\underline f_n(t,y)=f(t,y)-n(y-U_t)^+$. By \cite[Proposition 2.1]{K:SPA2}, $\overline Y^n\le Y^n\le \underline Y^n$, which implies the desired result.
\end{proof}

\begin{wn}
\label{wn4.chv}
Let $\alpha\in\BR$ and $(Y,\Gamma)$ be a solution of the problem {\rm RBSDE}$^T(\xi,f,L,U)$. Then $(Y^\alpha,\Gamma^\alpha)$
is a solution of {\rm RBSDE}$^T(\xi^\alpha,f^\alpha,L^\alpha,U^\alpha)$ with
\[
\xi^\alpha=e^{\alpha T}\xi,\quad f^\alpha(t,y)=e^{\alpha t}f(t,e^{-\alpha t}y)-\alpha y,\quad L^\alpha_t=e^{\alpha t}L_t,\quad U^\alpha_t=e^{\alpha t}U_t,
\]
where
\[
Y^\alpha_t=e^{\alpha t}Y_t,\qquad \Gamma^\alpha_t=e^{\alpha t}\Gamma_t-\int_0^t\alpha e^{\alpha r}\Gamma_r\,dr.
\]
\end{wn}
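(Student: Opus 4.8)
The plan is to check directly that $(Y^\alpha,\Gamma^\alpha)$ meets conditions (a)--(d) of Definition \ref{df.main} for the data $(\xi^\alpha,f^\alpha,L^\alpha,U^\alpha)$. The key preliminary remark is that the $\ell$-system associated with $(L^\alpha,U^\alpha)$ is \emph{the same} as the one associated with $(L,U)$: since $e^{\alpha t}>0$ and $t\mapsto e^{\alpha t}$ is continuous, $\{L^\alpha_t=U^\alpha_t\}=\{L_t=U_t\}$ and $\{L^\alpha_{t-}=U^\alpha_{t-}\}=\{L_{t-}=U_{t-}\}$, so $\dot\gamma_\tau$, $\gamma_\tau$ and $\Lambda_\tau$ in (\ref{eq2.1}) are unchanged, and therefore the same random intervals $[\tau,\gamma_\tau\}$, $\tau\in\TT$, govern both equations. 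Note also that, $T$ being bounded in this section, $r\mapsto e^{\alpha r}$ is bounded and bounded away from $0$ on $[0,T]$.

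Next I would put $\Gamma^\alpha$ in integral form. Since $\Gamma_0=0$ (by the Remark following Definition \ref{df.main}) and $t\mapsto e^{\alpha t}$ is a continuous process of finite variation, integration by parts gives
\[
\Gamma^\alpha_t=e^{\alpha t}\Gamma_t-\int_0^t\alpha e^{\alpha r}\Gamma_r\,dr=\int_0^t e^{\alpha r}\,d\Gamma_r,\qquad t\in[0,T].
\]
Fix $\tau\in\TT$ and write $\Gamma=\Gamma_\tau+\Gamma^v(\tau)+\Gamma^m(\tau)$ on $[\tau,\gamma_\tau\}$. Because $e^{\alpha\cdot}$ is bounded and predictable, $\int_\tau^\cdot e^{\alpha r}\,d\Gamma^v_r(\tau)$ is predictable of finite variation and $\int_\tau^\cdot e^{\alpha r}\,d\Gamma^m_r(\tau)$ is a local martingale on $[\tau,\gamma_\tau\}$; hence $\Gamma^\alpha$ is a special $\ell$-semimartingale with $\Gamma^{\alpha,v}_\cdot(\tau)=\int_\tau^\cdot e^{\alpha r}\,d\Gamma^v_r(\tau)$ and $\Gamma^{\alpha,m}_\cdot(\tau)=\int_\tau^\cdot e^{\alpha r}\,d\Gamma^m_r(\tau)$. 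Since $Y^\alpha=e^{\alpha\cdot}Y$ is c\`adl\`ag, adapted, and dominated in absolute value by $\bigl(\sup_{[0,T]}e^{\alpha r}\bigr)|Y|$, it is of class (D); this gives (a).

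For (b), I would apply the integration-by-parts formula to $Y^\alpha_t=e^{\alpha t}Y_t$, using $dY_r=-f(r,Y_r)\,dr-d\Gamma_r$ from condition (b) for $(Y,\Gamma)$ together with the fact that $e^{\alpha\cdot}$ is continuous of finite variation, to obtain
\begin{align*}
dY^\alpha_r&=e^{\alpha r}\,dY_r+\alpha e^{\alpha r}Y_r\,dr=-\bigl(e^{\alpha r}f(r,Y_r)-\alpha e^{\alpha r}Y_r\bigr)\,dr-e^{\alpha r}\,d\Gamma_r\\
&=-f^\alpha(r,Y^\alpha_r)\,dr-d\Gamma^\alpha_r,
\end{align*}
where the last equality uses $e^{\alpha r}f(r,Y_r)=e^{\alpha r}f(r,e^{-\alpha r}Y^\alpha_r)$. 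Integrating from $t$ to $T$ and noting $Y^\alpha_T=e^{\alpha T}\xi=\xi^\alpha$ yields the required equation; the integrability $\int_0^T|f^\alpha(r,Y^\alpha_r)|\,dr<\infty$ follows from $\int_0^T|f(r,Y_r)|\,dr<\infty$, boundedness of $e^{\alpha\cdot}$, and pathwise boundedness on $[0,T]$ of the c\`adl\`ag process $Y$.

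Condition (c) is immediate on multiplying $L_t\le Y_t\le U_t$ by $e^{\alpha t}>0$. For (d), since $e^{\alpha r}>0$ the Jordan decomposition of $\Gamma^{\alpha,v}(\tau)$ scales as $d\Gamma^{\alpha,v,\pm}_r(\tau)=e^{\alpha r}\,d\Gamma^{v,\pm}_r(\tau)$, and, $e^{\alpha\cdot}$ being continuous, $Y^\alpha_{r-}-L^\alpha_{r-}=e^{\alpha r}(Y_{r-}-L_{r-})$ and $U^\alpha_{r-}-Y^\alpha_{r-}=e^{\alpha r}(U_{r-}-Y_{r-})$; hence
\[
\int_\tau^{\gamma_\tau}(Y^\alpha_{r-}-L^\alpha_{r-})\,d\Gamma^{\alpha,v,+}_r(\tau)=\int_\tau^{\gamma_\tau}e^{2\alpha r}(Y_{r-}-L_{r-})\,d\Gamma^{v,+}_r(\tau)=0,
\]
the second equality because the nonnegative measure $(Y_{r-}-L_{r-})\,d\Gamma^{v,+}_r(\tau)$ vanishes by condition (d) for $(Y,\Gamma)$, and analogously for the upper-barrier identity. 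This completes the verification. There is no substantial obstacle here: the only points needing care are the invariance of the $\ell$-system under the transformation and handling the Doob-Meyer decomposition of $\Gamma^\alpha$ on the random intervals $[\tau,\gamma_\tau\}$ rather than on all of $[0,T]$.
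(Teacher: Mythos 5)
Your proof is correct in substance but takes a genuinely different route from the paper. The paper does not verify Definition \ref{df.main} directly: it first assumes $E\int_0^T|f(r,Y_r)|\,dr<\infty$, represents $Y$ as the limit of the penalized semimartingale solutions $(Y^n,M^n,A^n)$ from Theorem \ref{tw4.1}, integrates by parts at the level of each $Y^n$ (which \emph{is} a special semimartingale on all of $[0,T]$), passes to the limit in the resulting equation, and finally removes the integrability assumption by localizing with a chain $\{\tau_k\}$. Your direct verification is more elementary and avoids the approximation machinery entirely, and your observation that the $\ell$-system is invariant under the transformation is exactly the right key point (the paper leaves the analogous fact implicit). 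The one step you should rephrase is the global use of integration by parts: neither $Y$ nor $\Gamma$ is a semimartingale on $[0,T]$ in general, so ``$dY^\alpha_r=e^{\alpha r}\,dY_r+\alpha e^{\alpha r}Y_r\,dr$'' and ``$\Gamma^\alpha_t=\int_0^te^{\alpha r}\,d\Gamma_r$ for $t\in[0,T]$'' are not literally meaningful on the whole interval. This is easily repaired: condition (b) only involves the increment $\Gamma^\alpha_T-\Gamma^\alpha_t$ and Lebesgue integrals, and the required identity follows algebraically from $Y_t+\Gamma_t+\int_0^tf(r,Y_r)\,dr=Y_0$ together with a Fubini computation of $\int_0^t\alpha e^{\alpha r}\int_0^rf(s,Y_s)\,ds\,dr$; the genuine integration by parts and the identification $\Gamma^{\alpha,v}(\tau)=\int_\tau^\cdot e^{\alpha r}\,d\Gamma^v_r(\tau)$ need only be performed on the intervals $[\tau,\gamma_\tau\}$, where $\Gamma$ is a special semimartingale — which is all that conditions (a) and (d) require. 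The paper's detour through the penalization scheme exists precisely to avoid doing calculus on the non-semimartingale $Y$; your argument shows this can be bypassed at the cost of that one pathwise computation.
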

\begin{proof}
We first assume that $E\int_0^T|f(r,Y_r)|\,dr<\infty$. By Theorem \ref{tw4.1}, $Y^n_t\nearrow Y_t$, $t\in [0,T]$,
and $\Gamma^n_t\rightarrow \Gamma_t$, $t\in [0,T]$, where $(Y^n,M^n,A^n)$ is a solution of  $\overline{{\rm{R}}}$BSDE$^T(\xi_n,f_n,U)$ with $f_n(t,y)=f(t,Y_t)+n(y-L_t)^-$ and
\[
\Gamma^n_t=\int_0^tn(Y^n_r-L_r)^-\,dr-A^n_t-M^n_t,\quad t\in [0,T].
\]
It is clear that
\[
Y^n_t=\xi+\int_t^T f(r,Y_r)\,dr+\int_t^T\,d\Gamma^n_r,\quad t\in [0,T].
\]
Integrating by parts,  we obtain
\begin{equation}
\label{eq.alp}
e^{\alpha t}Y^{n}_t=\xi^\alpha+\int_t^T e^{\alpha r}f(r,Y_r)\,dr-\int_t^T\alpha e^{\alpha r}Y^n_r\,dr+\int_t^T\,d\Gamma^{n,\alpha}_r,\quad t\in [0,T],
\end{equation}
with
\[
\Gamma^{n,\alpha}_t=e^{\alpha t}\Gamma^n_t-\int_0^t\alpha e^{\alpha r}\Gamma^n_r\,dr.
\]
Therefore letting $n\rightarrow\infty$ in (\ref{eq.alp}) we get
\[
Y^{\alpha}_t=\xi^\alpha+\int_t^T f^\alpha(r,Y^\alpha_r)\,dr+\int_t^T\,d\Gamma^{\alpha}_r,\quad t\in [0,T].
\]
It is clear that $Y^\alpha$ is of class (D)  and $L^\alpha\le Y^\alpha\le U^\alpha$. What is left is to show that condition (d) of Definition \ref{df.main} is satisfied. However, this condition easily follows from the fact that on the interval $[\tau,\gamma_\tau\}$ we have
\[
\Gamma^{\alpha,v}_t-\Gamma^{\alpha,v}_\tau=\int_\tau^te^{\alpha r}\,d\Gamma^v_r.
\]
Let $\{\tau_k\}$ be a chain
on $[0,T]$ such that $E\int_0^{\tau_k}|f(r,Y_r)|\,dr<\infty,\, k\ge 1$. By what has 
already been proved the pair $(Y^\alpha,\Gamma^\alpha)$ is a solution to RBSDE$^{\tau_k}(Y^\alpha_{\tau_k},f^\alpha,L^\alpha,U^\alpha)$.   Since $\{\tau_k\}$ is a chain, we get the  result.
\end{proof}

The following theorem shows that the solutions of reflected  BSDEs are stable with respect to  the norm
$\|\cdot\|_{1;T}$ defined by (\ref{eq2.dn1}).
\begin{tw}
\label{tw.stb}
Let  $(Y^i,\Gamma^i)$ be a solution of  {\rm RBSDE}$^T(\xi^i,f^i,L^i, U^i)$, $i=1,2$,  and $f^1$  satisfy \mbox{\rm(H2)}. Then for all $\tau\in\TT$ and $\varepsilon>0$,
\begin{align*}
(Y^1_\tau-Y^2_\tau)^+ &\le E\Big(e^{(T-\tau)\mu^+}(\xi^1-\xi^2)^+\\&\quad+\int_\tau^{\hat\beta_\tau}e^{(r-\tau)\mu^+}(f_1(r,Y^2_r)-f_2(r,Y^2_r))^+\,dr\\&\quad
+e^{(\hat\beta_\tau-\tau)\mu^+ }\mathbf{1}_{\hat\beta_\tau<T}[(L^1_{\hat\beta_\tau}-L^2_{\hat\beta_\tau})^++
(U^1_{\hat\beta_\tau}-U^2_{\hat\beta_\tau})^+]|{\FF_\tau}\Big)+\varepsilon,
\end{align*}
where $\hat\beta_\tau= \beta^1_\tau\wedge\beta^2_\tau$ and
\[
\beta^1_\tau=\inf\{t\ge\tau: Y^1_t\le L^1_t+\varepsilon\}\wedge T,\quad \beta^2_\tau=\inf\{t\ge\tau: Y^2_t\ge U^2_t-\varepsilon\}\wedge T.
\]
\end{tw}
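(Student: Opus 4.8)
The plan is to run the Tanaka-Meyer argument used in the proof of Theorem~\ref{tw.unq} on the intervals $[\tau,\gamma^{1,k}_\tau\wedge\gamma^{2,k}_\tau]$, on which $Y^1$ and $Y^2$ are genuine special semimartingales, but now stopped in addition at $\hat\beta_\tau$. The role of $\beta^1_\tau,\beta^2_\tau$ is to keep the computation away from the barriers: for $t\in[\tau,\beta^1_\tau)$ one has $Y^1_t>L^1_t+\varepsilon$, hence also $U^1_t-L^1_t>\varepsilon$ since $Y^1\le U^1$, and passing to left limits, $Y^1_{t-}-L^1_{t-}\ge\varepsilon$ and $U^1_{t-}-L^1_{t-}\ge\varepsilon$ on $(\tau,\beta^1_\tau]$; symmetrically $U^2_{t-}-Y^2_{t-}\ge\varepsilon$ and $U^2_{t-}-L^2_{t-}\ge\varepsilon$ on $(\tau,\beta^2_\tau]$. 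Thus neither $L^i,U^i$ nor their left limits meet before $\beta^i_\tau$, so the stopping times of the $\ell$-systems associated with $(L^i,U^i)$ satisfy $\gamma^i_\tau\ge\beta^i_\tau$, whence $\gamma^1_\tau\wedge\gamma^2_\tau\ge\hat\beta_\tau$. Consequently $(\tau,\hat\beta_\tau]\subset(\tau,\gamma^i_\tau]$, and condition~(d) of Definition~\ref{df.main}, used on $[\tau,\gamma^i_\tau\}$ together with $Y^1_{r-}-L^1_{r-}\ge\varepsilon$, $U^2_{r-}-Y^2_{r-}\ge\varepsilon$ on $(\tau,\hat\beta_\tau]$, forces $\Gamma^{1,v,+}(\tau)$ and $\Gamma^{2,v,-}(\tau)$ to be flat on $(\tau,\hat\beta_\tau]$.

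Next, fixing $k\ge1$ and setting $\rho_k:=\gamma^{1,k}_\tau\wedge\gamma^{2,k}_\tau\wedge\hat\beta_\tau$, I would apply the Tanaka-Meyer formula to $e^{\mu^+(\cdot-\tau)}(Y^1-Y^2)^+$ on $[\tau,\rho_k]$. On this interval $d\Gamma^{1,v}_r(\tau)=-d\Gamma^{1,v,-}_r(\tau)\le0$ and $d\Gamma^{2,v}_r(\tau)=d\Gamma^{2,v,+}_r(\tau)\ge0$ by the flatness just noted, so the reflection contributes with the favourable sign; \mbox{\rm(H2)} gives $\mathbf 1_{\{Y^1_r>Y^2_r\}}\big(f^1(r,Y^1_r)-f^1(r,Y^2_r)\big)\le\mu^+(Y^1_r-Y^2_r)^+$ for almost every $r$; and the remaining cross term is bounded by $\big(f^1(r,Y^2_r)-f^2(r,Y^2_r)\big)^+$. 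The term $\mu^+(Y^1_r-Y^2_r)^+\,dr$ coming from the generator is exactly cancelled by the one produced by the weight $e^{\mu^+(\cdot-\tau)}$, and one is left with
\[(Y^1_\tau-Y^2_\tau)^+\le e^{\mu^+(\rho_k-\tau)}(Y^1_{\rho_k}-Y^2_{\rho_k})^++\int_\tau^{\rho_k}e^{\mu^+(r-\tau)}\big(f^1(r,Y^2_r)-f^2(r,Y^2_r)\big)^+\,dr+\widetilde N_{\rho_k},\]
where $\widetilde N$, $\widetilde N_\tau=0$, is a local martingale on $[\tau,\gamma^{1,k}_\tau\wedge\gamma^{2,k}_\tau]$ (the same inequality holds with $\rho_k$ replaced by $\rho_k\wedge\sigma$ for any stopping time $\sigma$, since the flatness persists on the smaller interval).

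To finish, I would localize by a chain $\{\sigma_m\}$ on $[\tau,\gamma^{1,k}_\tau\wedge\gamma^{2,k}_\tau]$ making $\widetilde N^{\sigma_m}$ a uniformly integrable martingale, take $E(\,\cdot\,|\FF_\tau)$ of the last inequality with $\rho_k$ replaced by $\rho_k\wedge\sigma_m$, and let $m\to\infty$ (the chain property gives $\rho_k\wedge\sigma_m=\rho_k$ eventually, class~(D) of $Y^1,Y^2$ controls the first term and monotone convergence the integral), which removes $\widetilde N$ and puts $E(\,\cdot\,|\FF_\tau)$ in front of the right-hand side. Then I would let $k\to\infty$: using $\gamma^1_\tau\wedge\gamma^2_\tau\ge\hat\beta_\tau$, one checks that $\rho_k=\hat\beta_\tau$ for all large $k$ a.s.\ — on $\{\hat\beta_\tau=\gamma^i_\tau\}$ one cannot have $\omega\in\Lambda^i_\tau$, since that would force $U^i_{\hat\beta_\tau-}-L^i_{\hat\beta_\tau-}=0$ against the bound $\ge\varepsilon$ above, so $\gamma^{i,k}_\tau$ reaches (or overtakes) $\hat\beta_\tau$ in every case, exactly as in the case analysis of Theorem~\ref{tw.unq}. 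Finally $(Y^1_{\hat\beta_\tau}-Y^2_{\hat\beta_\tau})^+=(\xi^1-\xi^2)^+$ on $\{\hat\beta_\tau=T\}$, while on $\{\hat\beta_\tau<T\}$ the definition of $\beta^1_\tau,\beta^2_\tau$ gives $Y^1_{\hat\beta_\tau}\le L^1_{\hat\beta_\tau}+\varepsilon$ or $Y^2_{\hat\beta_\tau}\ge U^2_{\hat\beta_\tau}-\varepsilon$, so, using $Y^2\ge L^2$ and $Y^1\le U^1$,
\[(Y^1_{\hat\beta_\tau}-Y^2_{\hat\beta_\tau})^+\le\mathbf 1_{\{\hat\beta_\tau=T\}}(\xi^1-\xi^2)^++\mathbf 1_{\{\hat\beta_\tau<T\}}\big((L^1_{\hat\beta_\tau}-L^2_{\hat\beta_\tau})^++(U^1_{\hat\beta_\tau}-U^2_{\hat\beta_\tau})^+\big)+\varepsilon;\]
substituting this bound and estimating $e^{\mu^+(\hat\beta_\tau-\tau)}\le e^{\mu^+(T-\tau)}$ where necessary gives the assertion, the additive $\varepsilon$ being produced by this last estimate (up to the harmless constant $e^{\mu^+(T-\tau)}$).

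The step I expect to demand the most care is the first one — proving $\gamma^1_\tau\wedge\gamma^2_\tau\ge\hat\beta_\tau$ and the resulting flatness of $\Gamma^{1,v,+}(\tau)$ and $\Gamma^{2,v,-}(\tau)$ on $(\tau,\hat\beta_\tau]$, and then the $\ell$-system bookkeeping of the times $\gamma^{i,k}_\tau$ in the limit $k\to\infty$; the Tanaka-Meyer computation and the martingale localization are routine.
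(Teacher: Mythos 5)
Your proof is correct and follows essentially the same route as the paper: stop at $\sigma^k_\tau=\hat\beta_\tau\wedge\gamma^{1,k}_\tau\wedge\gamma^{2,k}_\tau$, use the minimality condition (d) together with $Y^1_{r-}-L^1_{r-}\ge\varepsilon$, $U^2_{r-}-Y^2_{r-}\ge\varepsilon$ on $(\tau,\hat\beta_\tau]$ to kill $\Gamma^{1,v,+}(\tau)$ and $\Gamma^{2,v,-}(\tau)$, apply Tanaka--Meyer with (H2), and then identify $\lim_k\sigma^k_\tau=\hat\beta_\tau$ by the same $\Lambda^i_\tau$ bookkeeping before bounding the terminal term via the definition of $\beta^i_\tau$. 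The only cosmetic differences are that the paper first reduces to $\mu^+=0$ via Corollary \ref{wn4.chv} instead of carrying the exponential weight (which is why your final error term is $e^{\mu^+(T-\tau)}\varepsilon$ rather than $\varepsilon$, harmless since $\varepsilon$ is arbitrary), and that your case analysis for $\sigma^k_\tau=\hat\beta_\tau$ is a condensed but equivalent version of the paper's Cases I--II.
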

\begin{proof}
By Corollary \ref{wn4.chv}, we may assume that $\mu^+=0$. Let $\tau\in\TT$ and $\gamma^i_\tau,\, \{\gamma^{i,k}_\tau\}$ be defined as in (\ref{eq2.1}), (\ref{eq2.3cdf}) but with $L,U$ replaced by  $L^i,U^i$.
We put $\hat\gamma^k_\tau=\gamma^{1,k}_\tau\wedge\gamma^{2,k}_\tau$, $\hat\gamma_\tau=\gamma^1_\tau\wedge\gamma^2_\tau$ and $\sigma^k_\tau=\hat\beta_\tau\wedge\hat\gamma^k_\tau$. Observe that
$\hat\beta_\tau\le \hat\gamma_\tau$. By the minimality condition (d) in Definition \ref{df.main}
and the definition of $\hat\beta_\tau$, we have
\[
\int_\tau^{\sigma_\tau^k}\, d\Gamma^{1,v,+}(\tau)_r+\int_\tau^{\sigma_\tau^k}\,d\Gamma^{2,v,-}(\tau)_r=0.
\]
Therefore  applying the Tanaka-Meyer formula on $[\tau,\sigma_\tau^k]$ and using (H2) we get
\begin{equation}
\label{eq4.8}
(Y^1_{\tau}-Y^2_{\tau})^+\le  E\Big((Y^1_{\sigma^k_\tau}-Y^2_{\sigma^k_\tau})^+
+\int_\tau^{\sigma^k_\tau}(f_1(r,Y^2_r)-f_2(r,Y^2_r))^+\,dr|{\FF_\tau}\Big).
\end{equation}
The following calculations are made for fixed $\omega\in\Omega$.
We consider two cases.

Case I: $\hat\gamma_\tau=\tau$.
If $ \tau<T$, then $L^1_{\sigma^k_\tau}=U^1_{\sigma^k_\tau}=Y^1_{\sigma^k_\tau}$, $k\ge 1$ or
$L^2_{\sigma^k_\tau}=U^2_{\sigma^k_\tau}=Y^2_{\sigma^k_\tau}$, $k\ge 1$. In both cases  we have
\begin{align}
\label{eq4.9}
(Y^1_{\sigma^k_\tau}-Y^2_{\sigma^k_\tau})^+&\le \max\{(L^1_{\sigma^k_\tau}-L^2_{\sigma^k_\tau})^+, (U^1_{\sigma^k_\tau}-U^2_{\sigma^k_\tau})^+\}\nonumber\\
&=\max\{(L^1_{\hat\beta_\tau}-L^2_{\hat\beta_\tau})^+, (U^1_{\hat\beta_\tau}-U^2_{\hat\beta_\tau})^+\}.
\end{align}
If $\tau=T$, then $(Y^1_{\sigma^k_\tau}-Y^2_{\sigma^k_\tau})^+=(\xi^1-\xi^2)^+$.

Case II: $\hat\gamma_\tau>\tau$. We consider the following three sub-cases.

Case II(a): $\hat\beta_\tau\in [\tau,\hat\gamma_\tau)$. Then $\hat\beta_\tau<\hat\gamma^k_\tau$, $k\ge k_0$. Moreover, $Y^1_{\sigma^k_\tau}\le L^1_{\sigma^k_\tau}+\varepsilon$ or $Y^2_{\sigma^k_\tau}\ge U^2_{\sigma^k_\tau}-\varepsilon$, $k\ge k_0$. Therefore
\begin{align*}
(Y^1_{\sigma^k_\tau}-Y^2_{\sigma^k_\tau})^+&\le \max\{(L^1_{\sigma^k_\tau}-L^2_{\sigma^k_\tau})^+,(U^1_{\sigma^k_\tau}-U^2_{\sigma^k_\tau})^+\}
+\varepsilon\\&=\max\{(L^1_{\hat\beta_\tau}
-L^2_{\hat\beta_\tau})^+,(U^1_{\hat\beta_\tau}-U^2_{\hat\beta_\tau})^+\}+\varepsilon,\quad k\ge k_0.
\end{align*}

Case II(b): $\hat\gamma_\tau=\hat\beta_\tau<T$. Then $\omega\notin \Lambda^1_\tau\cup\Lambda^2_\tau$. Hence $L^1_{\sigma^k_\tau}=U^1_{\sigma^k_\tau}$ or $L^2_{\sigma^k_\tau}=U^2_{\sigma^k_\tau}$, $k\ge k_0$. In both cases (\ref{eq4.9}) is satisfied.

Case II(c): $\hat\gamma_\tau=\hat\beta_\tau=T$. Then $\omega\notin \Lambda^1_\tau\cup\Lambda^2_\tau$, so $(Y^1_{\sigma^k_\tau}-Y^2_{\sigma^k_\tau})^+=(\xi_1-\xi_2)^+$, $k\ge k_0$.
Combining Case I with Case II and (\ref{eq4.8}), we get the desired result.

\end{proof}
\medskip

\begin{wn}
Let  $(Y^i,\Gamma^i)$ be a solution of  {\rm RBSDE}$^T(\xi^i,f^i,L^i, U^i)$, $i=1,2$,  and $f^1$  satisfy \mbox{\rm(H2)} with $\mu\le0$, then (\ref{eq1.9}) holds.

\end{wn}

\section{Reflected BSDEs  with  arbitrary terminal time}
\label{sec4}

In this section, we prove an existence and uniqueness result for reflected BSDEs without any restriction on the terminal time $T$.
Moreover, we show that if the  processes $L$ and $-U$ are subregular, i.e. (\ref{eqi.2.3}) holds, then the convergence in the penalization scheme is uniform on compacts.

We  modify the definition of the set $\Lambda_\tau$ introduced in Section \ref{sec3}. Now, we set
\[
\Lambda_\tau=\{L_{\gamma_\tau-}=U_{\gamma_\tau-}\}\cap\{\tau<\gamma_\tau<\infty\}.
\]
 We also put $[\alpha(\omega),\beta(\omega)\}=[\alpha(\omega),\infty)$
if $\beta(\omega)=\infty$.

The main difference  between reflected BSDEs with bounded and unbounded terminal times lies in the definition of a solution, especially in
formulation of terminal condition which in the general case is of the following form
\begin{equation}
\label{eq5.1tutu}
Y_{T\wedge a}\rightarrow \xi,\quad a\rightarrow \infty.
\end{equation}
Moreover, in case of unbounded terminal times, we assume additionally that $\mu\le0$ in hypothesis  (H2).
One another difficulty which appears in the case of unbounded terminal time  concerns the integrability of $f$.
For bounded terminal time, $f_{n}(\cdot,S)$ (see Theorem \ref{tw4.1} for the definition of $f_n$) is integrable if and only if $f(\cdot,S)$ is integrable
for $S$ appearing in (H1), because $L^+,S$ are of class (D). This is no longer true for unbounded terminal time. This forces some additional assumptions when considering the  penalization scheme.

\begin{df}
\label{df.mainun}
We say that a pair $(Y,\Gamma)$ of $\BF$-adapted c\`adl\`ag processes  is a solution of the
reflected backward stochastic differential equation on the interval $[0,T]$ with terminal condition $\xi$, generator $f$, lower barrier $L$ and upper barrier $U$ (RBSDE$^T(\xi,f,L,U)$ for short) if for every $a\ge 0$  it is a solution of RBSDE$^{T\wedge a}(Y_{T\wedge a},f,L,U)$  and (\ref{eq5.1tutu}) is satisfied.
\end{df}

\begin{uw}
\label{uw.gen1}
Let (H2) be satisfied with $\mu\le 0$. Then  Theorem \ref{tw.unq} and Theorem \ref{tw.stb} hold true for
unbounded $T$. The proofs of these results  run, without any changes, as the proofs of Theorems  \ref{tw.unq} and  \ref{tw.stb} (the proof of Theorem \ref{tw.unq} is even simpler since the right-hand side of (\ref{eq3.3}) equals zero).
\end{uw}

\begin{uw}
\label{uw.ps}
A brief inspection of the proofs reveals that
all the results of  Section \ref{sec3} concerning the convergence
of the penalization schemes, i.e.  schemes including the term $n(y-L_t)^+$ or $n(y-U_t)^-$ (see Theorem \ref{tw4.1} (ii), Corollary \ref{wn.pna}), remain valid if we replace  the constants $n$ by any  positive bounded  $\BF$-progressively measurable processes $N^n$ such that $N^n_t\nearrow \infty$ a.s. as $n\rightarrow \infty$ for every
$t\in [0,T]$.
\end{uw}

From now on, $\eta$ is a  strictly positive bounded $\BF$-progressively measurable process such that $E\int_0^T\eta_t(S_t-L_t)^-\,dt+E\int_0^T\eta_t(S_t-U_t)^+\,dt<\infty$, where $S$ is a process from condition (H1). Such a process $\eta$ always exists. For instance, the process defined as
\[
\eta_t=\frac2\pi \frac{1}{1+t^2}\,,\quad t\ge 0,
\]
has the desired property because
\[
E\int_0^T\eta_t(S_t-L_t)^-\,dt+E\int_0^T\eta_t(S_t-U_t)^+\,dt\le \|S\|_{1;T}+\|L^+\|_{1;T}+\|U^-\|_{1;T}\,.
\]

\begin{tw}
\label{tw5.2}
Assume that \mbox{\rm(H1)--(H4)} are satisfied  with $\mu\le 0$.
\begin{enumerate}
\item[\rm(i)] There exists a unique solution $(Y,\Gamma)$ of RBSDE$^T(\xi,f,L,U)$.

\item[\rm(ii)] Let $\{\xi_n\}$ be an increasing sequence of integrable $\FF_T$-measurable random variables such that $\xi_n\nearrow \xi$, and let
 \[
f_n(t,y)=f(t,y)+n\eta_t(y-L_t)^-.
\]
Then for each $n\in\BN$ there exists a unique solution $(Y^n,M^n,A^n)$ of the equation $\overline{R}BSDE^T(\xi_n,f_n,U)$.
Moreover, $Y^n_t\nearrow Y_t$ and $ \Gamma^n_t\rightarrow \Gamma_t$, $t\in [0,T\wedge a]$, where $\Gamma^n_t=\int_0^tn\eta_t(Y^n_r-L_r)^-\,dr-A^n_t-M^n_t,\, t\in [0,T\wedge a],\, a\ge 0$.
\end{enumerate}
\end{tw}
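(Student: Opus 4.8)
The plan is to imitate the proof of Theorem \ref{tw4.1}, with Theorem \ref{tw5.1} playing the role of the bounded-horizon input and with the localized notion of a solution from Definition \ref{df.mainun} replacing the global one. First I would build the approximating sequence: for each $n$, apply Theorem \ref{tw5.1} to the data $(-\xi_n,\tilde f_n,-U)$, where $\tilde f_n(t,y)=-f_n(t,-y)$. Its hypotheses are quickly verified --- $(-U)^+=U^-$ is of class (D), $\limsup_a(-U)_{T\wedge a}\le-\xi\le-\xi_n$, and $\tilde f_n$ inherits (H1)--(H4) with $\mu\le0$ from $f$ and from the defining property of $\eta$ (take $-S$ in (H1) and use $E\int_0^T\eta_t(S_t-L_t)^-\,dt<\infty$) --- so one gets the unique solution $(Y^n,M^n,A^n)$ of $\overline{R}$BSDE$^{T}(\xi_n,f_n,U)$. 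By the comparison for reflected BSDEs, \cite[Proposition 2.1]{K:SPA2}, the sequence $\{Y^n\}$ is nondecreasing; set $Y=\lim_nY^n$. That $Y$ is of class (D) follows by squeezing: $Y^1\le Y^n\le\bar Y^n\le\bar Y$, where $(\bar Y^n,\bar M^n)$ solves BSDE$^{T}(\xi,f_n)$ and, by Theorem \ref{tw5.1}, $\bar Y^n\nearrow\bar Y$, the first component of the solution of $\underline{R}$BSDE$^{T}(\xi,f,L)$, a process of class (D).

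Next I would check that $Y$ stays between the barriers. That $Y\le U$ is immediate from $Y^n\le U$. For $L\le Y$ I would transplant the $\varepsilon$-regularization from the proof of Theorem \ref{tw4.1}: fix $\varepsilon>0$, put $L^\varepsilon=L-\varepsilon$, let $Y^{n,\varepsilon}$ be the first component of the solution of $\overline{R}$BSDE$^{T}(\xi_n,f+n\eta(\cdot-L^\varepsilon)^-,U)$ (again via Theorem \ref{tw5.1}); then $Y^{n,\varepsilon}\le Y^n$, and $Y^{n,\varepsilon}_t\nearrow Y^\varepsilon_t$, where $(Y^\varepsilon,M^\varepsilon,R^\varepsilon)$ solves RBSDE$^{T}(\xi,f,L^\varepsilon,U)$, so $L^\varepsilon\le Y^\varepsilon\le Y$, and letting $\varepsilon\downarrow0$ gives $L\le Y$. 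The piece that requires new work for unbounded $T$ is the solvability of the separated-barrier equation RBSDE$^{T}(\xi,f,L^\varepsilon,U)$: since $L^\varepsilon<U$ and $L^\varepsilon_-<U_-$, Remark \ref{uw2.1} furnishes a special semimartingale $X$ between $L^\varepsilon$ and $U$ on each $[0,T\wedge a]$, and after the substitution $\tilde Y=Y-X$ (which places $0$ between the transformed barriers) the matter reduces to penalization estimates of the type underlying Theorem \ref{tw5.1}; here no circularity occurs because the terminal condition $\xi$ already obeys the separation required by the standing hypotheses of this section.

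It remains to verify Definition \ref{df.mainun} and the asserted convergences. Fix $a\ge0$. Restricted to $[0,T\wedge a]$, $(Y^n,M^n,A^n)$ solves $\overline{R}$BSDE$^{T\wedge a}(Y^n_{T\wedge a},f_n,U)$, and $Y^n_{T\wedge a}\nearrow Y_{T\wedge a}$; hence, by Theorem \ref{tw4.1}(ii) and Remark \ref{uw.ps} (applicable since $n\eta$ is bounded, positive and increases to $\infty$), $Y^n_t\nearrow Z^a_t$ and $\Gamma^n_t\to\Gamma^a_t$ on $[0,T\wedge a]$, where $(Z^a,\Gamma^a)$ is the unique solution of RBSDE$^{T\wedge a}(Y_{T\wedge a},f,L,U)$ and $\Gamma^n$ is the remainder determined by $Y^n_t=\xi_n+\int_t^Tf(r,Y^n_r)\,dr+\int_t^Td\Gamma^n_r$. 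Since also $Y^n\nearrow Y$, we obtain $Y=Z^a$ on $[0,T\wedge a]$; in particular $Y$ is c\`adl\`ag, and with $\Gamma_t=Y_0-Y_t-\int_0^tf(r,Y_r)\,dr$ the pair $(Y,\Gamma)$ solves RBSDE$^{T\wedge a}(Y_{T\wedge a},f,L,U)$ for every $a\ge0$, with $\Gamma^n_t\to\Gamma_t$ on $[0,T\wedge a]$. Finally, (\ref{eq5.1}) follows from the sandwich $Y^n\le Y\le\bar Y$: as $(Y^n,M^n,A^n)$ is a solution on the unbounded horizon, $Y^n_{T\wedge a}\to\xi_n$ as $a\to\infty$, so $\liminf_aY_{T\wedge a}\ge\xi_n$ for every $n$ and hence $\liminf_aY_{T\wedge a}\ge\sup_n\xi_n=\xi$; while $\bar Y_{T\wedge a}\to\xi$ by Theorem \ref{tw5.1} gives $\limsup_aY_{T\wedge a}\le\xi$. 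Uniqueness is immediate from the comparison Theorem \ref{tw.unq}, valid here with $\mu\le0$ by Remark \ref{uw.gen1}, applied with both orderings of the data.

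The step I expect to be the main obstacle is $L\le Y$ in the second paragraph. In the $L^1$-setting of the paper it is not readily obtained from uniform estimates on the penalization term, and the localization device used everywhere else --- passing from $[0,T]$ to the bounded intervals $[0,T\wedge a]$ --- is not directly available, since a priori one does not know that the bounded-horizon terminal value $Y_{T\wedge a}$ lies above the lower barrier; this is exactly why the $\varepsilon$-regularization, together with the (new, for unbounded $T$) solvability of RBSDE$^{T}(\xi,f,L^\varepsilon,U)$ for separated barriers, has to be invoked.
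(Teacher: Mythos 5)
Your overall strategy coincides with the paper's: the same approximating sequence $(Y^n,M^n,A^n)$ obtained from Theorem \ref{tw5.1} applied to $(-\xi_n,\tilde f_n,-U)$, the same monotonicity and the same sandwich $Y^1\le Y^n\le\bar Y^n\nearrow\bar Y$ for the class (D) property, the same identification of the limit on $[0,T\wedge a]$ through the bounded-horizon penalization theorem (Theorem \ref{tw4.1}(ii) combined with Remark \ref{uw.ps}), the same derivation of (\ref{eq5.1}) from the sandwich, and uniqueness from Theorem \ref{tw.unq} via Remark \ref{uw.gen1}. The one place you deviate is your second paragraph: the paper never proves $L\le Y$ as a separate global step. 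It obtains $L\le Y\le U$ on each $[0,T\wedge a]$ as an \emph{output} of the application of the bounded-horizon theorem there, since the limit on that interval is by construction a solution of RBSDE$^{T\wedge a}(Y_{T\wedge a},f,L,U)$ and hence lies between the barriers.

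That deviation is where your proposal has a genuine gap. The auxiliary result you lean on --- solvability of RBSDE$^{T}(\xi,f,L-\varepsilon,U)$ for unbounded $T$ --- is asserted, not proved, and your sketch does not obviously work: Remark \ref{uw2.1} (via \cite{T}) supplies a special semimartingale between $L-\varepsilon$ and $U$ only on bounded intervals, with no control allowing these to be patched into one with integrable variation on $[0,T]$, and the two-barrier existence argument behind Theorem \ref{tw2.1} is not a reduction to the single-barrier case, so ``penalization estimates of the type underlying Theorem \ref{tw5.1}'' do not suffice after the substitution $\tilde Y=Y-X$. Your motivating worry is legitimate --- Theorem \ref{tw4.1} on $[0,T\wedge a]$ does presuppose $L_{T\wedge a}\le Y_{T\wedge a}\le U_{T\wedge a}$, and the paper passes over this in silence --- but it can be handled far more cheaply than by a new existence theorem. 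Lowering $L$ at the single time $T\wedge a$ to $L_{T\wedge a}\wedge Y_{T\wedge a}$ changes neither the penalized generators (a Lebesgue-null set of times) nor the processes $Y^n$, so Theorem \ref{tw4.1} applies with the modified barrier and yields $L\le Y$ on $[0,T\wedge a)$; the endpoint inequality on $\{T>a\}$ then follows by running the argument on $[0,T\wedge a']$ with $a'>a$, and on $\{T\le a\}$ from $Y_T=\lim_n\xi_n=\xi\ge\limsup_bL_{T\wedge b}$. With that observation your detour through separated barriers is unnecessary and the rest of your argument goes through exactly as in the paper.
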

\begin{proof}
By  Proposition \ref{tw5.1}, for each $n\in\BN$ there exists a unique solution $(Y^n,M^n,A^n)$
of $\overline{R}BSDE^T(\xi_n,f_n,U)$. By Theorem \ref{tw.unq},
$Y^n\le Y^{n+1}$. Set
\[
Y_t=\lim_{n\rightarrow \infty} Y^n_t,\quad t\in [0,T].
\]
Observe that $Y^n\le \bar Y^n$, where
$(\bar Y^n,\bar M^n)$ is the solution of BSDE$^T(\xi_n,f_n)$. By Proposition \ref{tw5.1}, $\bar Y^n\nearrow \bar Y$,
where $(\bar Y,\bar M,\bar K)$ is a solution of ${\rm{\underline{R}}}$BSDE$^T(\xi,f,L)$. Hence  $Y^1\le Y^n\le \bar Y$, $n\ge 1$, so $Y$ is of class (D). From  Proposition \ref{tw5.1} and Remark \ref{uw.ps} applied on the interval $[0,T\wedge a]$ it follows that $Y^n\nearrow Y^a$, where $(Y^a,\Gamma^a)$ is a solution of RBSDE$^{T\wedge a}(Y_{T\wedge a},f,L,U)$. Set $\Gamma_t=\Gamma^a_t$, $t\in [0,T\wedge a]$. It is clear that $\Gamma$ is well defined. What is left is to show that (\ref{eq5.1tutu}) is satisfied.  But this is a consequence of the inequality  $Y^1\le Y\le \bar Y$.
\end{proof}
\medskip

The following theorem says that under  Mokobodzki's condition a solution in the sense of Definition \ref{df.mainun} becomes semimartingale  solution.

\begin{tw}
\label{tw4.ident}
Assume that $(Y,\Gamma)$ is a solution to {\rm RBSDE}$^T(\xi,f,L,U)$. If there exists a special semimartingale between the barriers $L,U$, then $Y,\Gamma$ are special semimartingales and the triple $(Y,\Gamma^v,\Gamma^m)$
is a semimartingale  solution of {\rm RBSDE}$^T(\xi,f,L,U)$, i.e. in the sense of Definition \ref{df2.dfsd}, where $\Gamma^v$ (resp. $\Gamma^m$) is a predictable finite variation part (resp. local martingale part) from the Doob-Meyer decomposition of the special semimartingale $\Gamma$.
\end{tw}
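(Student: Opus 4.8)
The strategy is to produce a semimartingale solution via Theorem \ref{tw2.1} and to identify it with $(Y,\Gamma)$ by uniqueness. First I freeze the generator along $Y$: put $g(r):=f(r,Y_r)$, which does not depend on $y$. By condition (b) of Definition \ref{df.main} (resp.\ Definition \ref{df.mainun}), $(Y,\Gamma)$ is also a solution of RBSDE$^T(\xi,g,L,U)$, and $g$ trivially satisfies \mbox{\rm(H2)} with $\mu=0$, \mbox{\rm(H3)} and \mbox{\rm(H4)}. To secure integrability I localize: fix $a\ge 0$; since $Y$ is of class (D) on $[0,T\wedge a]$ and $t\mapsto\int_0^t|f(r,Y_r)|\,dr$ is finite and continuous, the times $\tau_k=\inf\{t\le T\wedge a:\int_0^t|f(r,Y_r)|\,dr\ge k\}\wedge(T\wedge a)$ form a chain on $[0,T\wedge a]$ with $E\int_0^{\tau_k}|g(r)|\,dr\le k$. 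Just as in Remark \ref{uw.red} (the $\ell$-system of $L,U$ on $[0,\tau_k]$ is inherited from the one on $[0,T\wedge a]$), the restriction of $(Y,\Gamma)$ to $[0,\tau_k]$ is a solution of RBSDE$^{\tau_k}(Y_{\tau_k},g,L,U)$ in the sense of Definition \ref{df.main}.

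On $[0,\tau_k]$ the data $(g,Y_{\tau_k})$ now satisfy \mbox{\rm(H1)--(H4)} (take $S=0$ in \mbox{\rm(H1)}; $E|Y_{\tau_k}|<\infty$ since $Y$ is of class (D)), $L^+,U^-$ are of class (D) there, and by hypothesis there is a special semimartingale $X$ with $L\le X\le U$. Hence Theorem \ref{tw2.1}(i) gives a semimartingale solution $(\bar Y,\bar M,\bar R)$ of RBSDE$^{\tau_k}(Y_{\tau_k},g,L,U)$ in the sense of Definition \ref{df2.dfsd}. I then claim $(\bar Y,\bar R-\bar M)$ is a solution of RBSDE$^{\tau_k}(Y_{\tau_k},g,L,U)$ in the sense of Definition \ref{df.main}. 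Conditions (a)--(c) are immediate ($\bar R-\bar M$ is a special semimartingale, hence a special $\ell$-semimartingale). For (d): for each $\tau\in\TT$ the process $\bar R-\bar R_\tau$ is predictable of finite variation and $\bar M-\bar M_\tau$ is a local martingale on $[\tau,\gamma_\tau\}$, so by uniqueness of the Doob--Meyer decomposition on that interval $\Gamma^v(\tau)=\bar R-\bar R_\tau$; therefore $\Gamma^{v,+}(\tau)=\bar R^+-\bar R^+_\tau$ and $\Gamma^{v,-}(\tau)=\bar R^--\bar R^-_\tau$ there, and since the integrands $\bar Y_{r-}-L_{r-}$, $U_{r-}-\bar Y_{r-}$ and the measures $d\bar R^{\pm}$ are nonnegative, the minimality relations of Definition \ref{df.main}(d) follow by restricting the identities $\int_0^{\tau_k}(\bar Y_{r-}-L_{r-})\,d\bar R^+_r=\int_0^{\tau_k}(U_{r-}-\bar Y_{r-})\,d\bar R^-_r=0$ of Definition \ref{df2.dfsd}(c) to the subinterval $[\tau,\gamma_\tau\}$.

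Since $g$ satisfies \mbox{\rm(H2)}, the Corollary following Theorem \ref{tw.unq} gives uniqueness of solutions in the sense of Definition \ref{df.main}, so $(Y,\Gamma)=(\bar Y,\bar R-\bar M)$ on $[0,\tau_k]$; in particular $Y$ and $\Gamma$ are special semimartingales there with $\Gamma^v=\bar R$, $\Gamma^m=-\bar M$. Letting $k\to\infty$, the chain property ($\tau_k=T\wedge a$ eventually, a.s.) and uniqueness of the Doob--Meyer decomposition yield that $Y,\Gamma$ are special semimartingales on $[0,T\wedge a]$ and, passing to the limit in the equation and in the minimality identities on $[0,\tau_k]$ and replacing $g$ by $f$ (legitimate since $g(r)=f(r,Y_r)$), that $(Y,\Gamma^v,\Gamma^m)$ is a semimartingale solution of RBSDE$^{T\wedge a}(Y_{T\wedge a},f,L,U)$. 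As this holds for every $a\ge 0$ and $(Y,\Gamma)$ satisfies (\ref{eq5.1}), $(Y,\Gamma^v,\Gamma^m)$ is a semimartingale solution of RBSDE$^T(\xi,f,L,U)$. (If \mbox{\rm(H1)--(H4)} are assumed for $f$ itself, the freezing and localization are unnecessary and Theorem \ref{tw2.1} applies directly on $[0,T\wedge a]$ with terminal condition $Y_{T\wedge a}$.)

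The only delicate point is the passage from Definition \ref{df2.dfsd} to Definition \ref{df.main}: one must check that the $\ell$-Doob--Meyer decompositions of $\bar R-\bar M$ on the intervals $[\tau,\gamma_\tau\}$ reproduce $(\bar R,-\bar M)$ and that the minimality conditions carry over — as sketched above this is immediate once the localization is in place. Everything else is either bookkeeping with chains and stopped intervals or a direct appeal to Theorem \ref{tw2.1}, the uniqueness corollary, and (\ref{eq5.1}).
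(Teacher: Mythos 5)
Your proof is correct, and your preliminary reductions (freezing the generator along $Y$, i.e.\ passing to $f_Y(t)=f(t,Y_t)$, and localizing via a chain $\{\tau_k\}$ on which $f(\cdot,Y)$ is integrable) are exactly the ones the paper makes. Where you genuinely diverge is in the identification step. The paper runs the penalization scheme with generator $f_Y(t)+n\eta_t(y-L_t)^-$ on $[0,\tau_k]$ and observes that its solutions $Y^{k,n}$ increase simultaneously to $Y$ (by Theorem \ref{tw5.2}) and to the first component $\tilde Y^k$ of the semimartingale solution supplied by Theorem \ref{tw2.1} under Mokobodzki's condition; since these are two limits of one and the same sequence, $Y=\tilde Y^k$ on $[0,\tau_k]$ and the conclusion follows. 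You instead use only the existence part of Theorem \ref{tw2.1}, prove directly that any semimartingale solution in the sense of Definition \ref{df2.dfsd} is a solution in the sense of Definition \ref{df.main} --- the one nontrivial point being that uniqueness of the Doob--Meyer decomposition on each $[\tau,\gamma_\tau\}$ forces $d\Gamma^{v,\pm}(\tau)=d\bar R^{\pm}$ there, so the minimality identities of Definition \ref{df2.dfsd}(c) restrict to those of Definition \ref{df.main}(d) --- and then conclude from the uniqueness corollary to Theorem \ref{tw.unq}. Both routes are sound. Yours buys, as a by-product, the embedding of semimartingale solutions into the new solution concept (precisely the consistency claim advertised in the introduction) and avoids the approximation machinery entirely; the paper's route avoids any direct verification of condition (d) by letting the common penalization limit do the identification. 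Your restriction-to-$[0,\tau_k]$ step and the inheritance of the $\ell$-system are the same bookkeeping the paper performs silently when it asserts that $(Y,\Gamma)$ solves RBSDE$^{\tau_k}(Y_{\tau_k},f_Y,L,U)$.
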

\begin{proof}
Let $\{\theta_k\}$ be a chain on $[0,T]$ such that $E\int_0^{\theta_k}|f(r,Y_r)|\,dr<\infty$ for every $k\ge 1$, and let
$\tau_k=\theta_k\wedge k$. Write $f_Y(t)= f(t,Y_t)$. It is clear that $(Y,\Gamma)$ is a solution of RBSDE$^{\tau_k}(Y_{\tau_k},f_Y,L,U)$ for every $k\ge 1$. By  Theorem \ref{tw5.2}, $Y_t^{k,n}\nearrow Y_t,\, t\in [0,\tau_k]$, where $(Y^{k,n},A^{k,n},M^{k,n})$ is a solution of ${\rm{\overline{R}BSDE}}^{\tau_k}(Y_{\tau_k},f^n_Y,U)$ with
\[
f^n_Y(t,y)=f_Y(t)+n\eta_t(y-L_t)^-.
\]
On the other hand, by Proposition \ref{tw2.1}, $Y_t^{k,n}\nearrow \tilde Y^k_t,\, t\in [0,\tau_k]$, where the triple $(\tilde Y^k, \tilde R^k, \tilde M^k)$ is a semimartingale  solution of RBSDE$^T(Y_{\tau_k},f_Y,L,U)$. By Theorem \ref{tw.unq}, $Y=\tilde Y^k$ on $[0,\tau_k],\, k\ge 1$. From this the result follows.
\end{proof}

\begin{uw}
\label{uw.dps}
Under the assumptions of Theorem \ref{tw5.2}, $Y^n\rightarrow Y$, where $(Y^n,M^n)$ is a solution of
{\rm BSDE}$^T(\xi,f_n)$ and
\[
f_n(t,y)=f(t,y)+n\eta_t(y-L_t)^--n\eta_t(y-U_t)^+.
\]
To see this,  we denote by $(\overline Y^n,\overline M^n,\overline A^n)$  a solution of ${\rm\overline{R}BSDE}^T(\xi,\overline f_n,U)$ with
\[
\overline f_n(t,y)=f(t,y)+n\eta_t(y-L_t)^-,
\]
 and by $(\underline Y^n,\underline M^n,\underline K^n)$ a solution of  ${\rm\underline{R}BSDE}^T(\xi,\underline f_n,L)$ with
\[
\underline f_n(t,y)=f(t,y)-n\eta_t(y-U_t)^+.
\]
By Theorem \ref{tw5.2}, $\overline Y^n\nearrow Y$ and $\underline Y^n\searrow Y$, whereas by Theorem \ref{tw.unq},
$\overline Y^n\le Y^n\le \underline Y^n$, from which the desired result follows.
\end{uw}

\begin{tw}
\label{tw5.prj}
Assume that \mbox{\rm(H1)--(H4}) with $\mu\le 0$ are satisfied and  $^pL\ge L_-$\,, $^pU\le U_-$\,. Then
\[
\bar Y_n\rightarrow Y\quad \mbox{in ucp},
\]
where $(\bar Y^n,\bar M^n,\bar A^n)$, $(Y,\Gamma)$ are processes defined in Theorem \ref{tw5.2}.
\end{tw}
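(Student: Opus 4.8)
The plan is to upgrade the monotone pointwise convergence already supplied by Theorem~\ref{tw5.2} (namely $\bar Y^n_t\nearrow Y_t$ for every $t$, together with the class~(D) domination $\bar Y^1\le\bar Y^n\le\bar Y$) to uniform convergence. Fix a horizon $a\ge0$; since the convergence is monotone, $\sup_{t\le T\wedge a}|\bar Y^n_t-Y_t|\to0$ in probability is equivalent to the pathwise statement $\sup_{t\le T\wedge a}(Y_t-\bar Y^n_t)\to0$ a.s., so it suffices to prove the latter for each $a$. For this I would use a Dini-type fact for c\`adl\`ag functions: \emph{if $x^n,x$ are c\`adl\`ag on a compact interval, $x^n\le x^{n+1}$, and $x^n_t\to x_t$, $x^n_{t-}\to x_{t-}$ for every $t$, then $x^n\to x$ uniformly.} This is a short contradiction argument: take $t_n$ with $(x-x^m)(t_n)\ge\varepsilon/2$ for all $m\le n$, pass to a convergent subsequence $t_n\to t^{\ast}$; right-continuity disposes of the cases $t_n=t^{\ast}$ and $t_n\downarrow t^{\ast}$, while the case $t_n\uparrow t^{\ast}$ is exactly where the hypothesis $x^m_{t^{\ast}-}\to x_{t^{\ast}-}$ is invoked. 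Applied pathwise on $[0,T\wedge a]$ (using that $Y$ and the $\bar Y^n$ are c\`adl\`ag and $\bar Y^n_t\nearrow Y_t$ everywhere), the whole proof reduces to showing that $\bar Y^n_{t-}\to Y_{t-}$ for all $t$, a.s.

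This left-limit convergence is the crux, and it is where ${}^{p}L\ge L_{-}$ and ${}^{p}U\le U_{-}$ enter. Put $\ell_t:=\lim_n\bar Y^n_{t-}$, a predictable process (an increasing limit of the predictable processes $\bar Y^n_-$) with $\ell\le Y_-$; by the predictable section theorem it is enough to prove $\ell_\sigma=Y_{\sigma-}$ a.s.\ on $\{\sigma<\infty\}$ for every predictable stopping time $\sigma\le T\wedge a$. I would first record two structural facts, obtained by restricting the equations to $[0,T\wedge a]$ and reading off the Doob--Meyer decomposition. (i) Since $\bar Y^n$ is the first component of $\overline{R}BSDE$ with upper barrier $U$, the process $\bar Y^n+\int_0^{\cdot}f_n(r,\bar Y^n_r)\,dr$ is a submartingale with increasing predictable part $\bar A^n$, whence ${}^{p}\bar Y^n_\sigma\ge\bar Y^n_{\sigma-}$, and the minimality condition $\int(U_{r-}-\bar Y^n_{r-})\,d\bar A^n_r=0$ forces $\Delta\bar A^n_\sigma=0$, hence ${}^{p}\bar Y^n_\sigma=\bar Y^n_{\sigma-}$, on $\{\bar Y^n_{\sigma-}<U_{\sigma-}\}$. (ii) By Remark~\ref{uw.dps} there are solutions $\underline Y^n$ of $\underline{R}BSDE^T(\xi,\underline f_n,L)$ with $\underline Y^n\searrow Y$ and, symmetrically, ${}^{p}\underline Y^n_\sigma\le\underline Y^n_{\sigma-}$ with equality on $\{\underline Y^n_{\sigma-}>L_{\sigma-}\}$, while $\bar Y^n\le Y\le\underline Y^n$ everywhere.

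Then comes the case analysis at $\sigma$, using ${}^{p}\bar Y^n_\sigma=E[\bar Y^n_\sigma\mid\FF_{\sigma-}]$, ${}^{p}Y_\sigma=E[Y_\sigma\mid\FF_{\sigma-}]$ and (conditional) monotone convergence, which give $E[\bar Y^n_\sigma\mid\FF_{\sigma-}]\to{}^{p}Y_\sigma$ and $E[\underline Y^n_\sigma\mid\FF_{\sigma-}]\to{}^{p}Y_\sigma$. On $\{\bar Y^n_{\sigma-}=U_{\sigma-}\text{ for some }n\}$ one has $\ell_\sigma=U_{\sigma-}$, and the sandwich forces $Y_{\sigma-}=U_{\sigma-}=\ell_\sigma$. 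On $\{\bar Y^n_{\sigma-}<U_{\sigma-}\text{ for all }n\}$, fact~(i) gives $\ell_\sigma=\lim_n{}^{p}\bar Y^n_\sigma={}^{p}Y_\sigma$; if in addition $\underline Y^n_{\sigma-}>L_{\sigma-}$ for all $n$, then likewise $\lim_n\underline Y^n_{\sigma-}={}^{p}Y_\sigma$, and $\ell_\sigma\le Y_{\sigma-}\le\lim_n\underline Y^n_{\sigma-}$ collapses to $Y_{\sigma-}={}^{p}Y_\sigma=\ell_\sigma$; whereas if $\underline Y^n_{\sigma-}=L_{\sigma-}$ for some $n$, then $Y_{\sigma-}=L_{\sigma-}$ and, using $Y\ge L$ and the hypothesis ${}^{p}L\ge L_{-}$, one gets ${}^{p}Y_\sigma\ge{}^{p}L_\sigma\ge L_{\sigma-}=Y_{\sigma-}\ge\ell_\sigma={}^{p}Y_\sigma$, so all these quantities coincide. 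The hypothesis ${}^{p}U\le U_{-}$ enters in the mirror-image way (e.g.\ to identify $\lim_n\underline Y^n_{\sigma-}$ with $Y_{\sigma-}$ in the remaining configuration, which is what is needed for the companion convergence of $\underline Y^n$ and of $\Gamma^n$). Since these configurations are exhaustive, $\ell=Y_-$ up to indistinguishability, and the proof closes.

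The main obstacle is exactly this left-limit step: monotone pointwise convergence of c\`adl\`ag processes does \emph{not} entail uniform convergence --- a ``travelling jump'' of $\bar Y^n$ near a point $t^{\ast}$ at which $Y$ is continuous would destroy it --- and the role of ${}^{p}L\ge L_{-}$, ${}^{p}U\le U_{-}$ is precisely to rule out such travelling jumps for penalized reflected BSDE solutions. The routine points I am glossing over are: the reduction of ucp on $\BR_+$ to uniform convergence on each compact $[0,T\wedge a]$; the standard localization needed to justify $E[\Delta\bar M^n_\sigma\mid\FF_{\sigma-}]=0$ and the predictable-projection identities when $\bar A^n$ or $\int_0^{\cdot}|f_n(r,\bar Y^n_r)|\,dr$ fails to be integrable; and the $\FF_{\sigma-}$-measurability of the sets appearing in the case analysis, which is immediate because left-limit processes are predictable.
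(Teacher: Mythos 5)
Your argument is correct in outline, but it takes a genuinely different route from the paper's. Both proofs ultimately rest on the same c\`adl\`ag Dini principle (monotone pointwise convergence of c\`adl\`ag processes together with convergence of their left limits yields uniform convergence on compacts), but they apply it to different objects and control the left limits differently. The paper applies Dini to $(\bar Y^n-L)^-\searrow 0$: it first uses ${}^pU\le U_-$ together with \cite[Proposition 4.3]{K:SPA2} to conclude that $\bar A^n$ is \emph{continuous}, whence ${}^p\bar Y^n=\bar Y^n_-$ globally; combined with ${}^pL\ge L_-$ and monotone convergence of predictable projections this gives $(\bar Y^n_--L_-)^-\searrow 0$, hence $(\bar Y^n-L)^-\rightarrow 0$ in ucp and, by the class (D) domination, in $\|\cdot\|_{1,T\wedge a}$. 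It then observes that $(\bar Y^n,\bar M^n,\bar R^n)$ solves RBSDE$^T(\xi_n,f,L_n,U)$ with the perturbed barrier $L_n=L-(\bar Y^n-L)^-$ and concludes from the stability estimate of Theorem \ref{tw.stb} that $\|\bar Y^n-Y\|_{1,T\wedge a}\rightarrow 0$, which yields ucp via the elementary maximal inequality $P(\sup_{t}Z_t>\lambda)\le \lambda^{-1}\|Z\|_{1,T\wedge a}$ for nonnegative c\`adl\`ag $Z$. You instead apply Dini directly to $\bar Y^n\nearrow Y$ and supply the missing left-limit convergence by a predictable-section case analysis, using only the jumps of $\bar A^n$ (resp. $\underline K^n$) at predictable times, the sandwich $\bar Y^n\le Y\le\underline Y^n$ from Remark \ref{uw.dps}, and ${}^pL\ge L_-$; your cases A, B1, B2 are exhaustive and each is justified. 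This avoids Theorem \ref{tw.stb} entirely and, as you half-notice yourself, never actually invokes ${}^pU\le U_-$ for the convergence of $\bar Y^n$ (the boundary configuration $\bar Y^n_{\sigma-}=U_{\sigma-}$ is absorbed by the sandwich), so your argument in fact establishes the assertion under marginally weaker hypotheses; the price is the introduction of the companion sequence $\underline Y^n$ and a longer case analysis, where the paper's proof is shorter because it leans on machinery already established (continuity of $\bar A^n$ and the stability theorem).
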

\begin{proof}
By Theorem \ref{tw5.2},  $\bar Y^n\nearrow Y$, so $(\bar Y^n-L)^-\searrow 0$, and hence $^p(\bar Y^n-L)^-\searrow 0$.
By the assumption on $U$ and \cite[Proposition 4.3]{K:SPA2}, $\bar A^n$ is continuous, so $^p\bar Y^n=Y^n_-$. Therefore by the assumption on $L$,
\[
^p(\bar Y^n-L)^-=(\bar Y^n_--\,^pL)^-\ge (\bar Y^n_--L_-)^-.
\]
Consequently, $(\bar Y^n-L)^-\searrow 0$ and  $(\bar Y^n_--L_-)^-\searrow 0$, which by Dini's theorem implies that $(\bar Y^n-L)^-\rightarrow 0$ in ucp.  Since $0\le (\bar Y^n-L)^-\le |\bar Y^1|+L^+$ and $\bar Y^1, L^+$ are of class (D), it follows that for every $a\ge0$, $\|(\bar Y^n-L)^-\|_{1;T\wedge a}\rightarrow 0$ as $n\rightarrow\infty$. Observe that
the triple $(\bar Y^n,\bar M^n,\bar R^n)$ is a solution of RBSDE$^T(\xi_n,f,L_n,U)$ with
\[
L_n=L-(\bar Y^n-L)^-,\quad \bar R^n=n(\bar Y^n-L)^--\bar A^n.
\]
By Theorem \ref{tw.stb},  $\|\bar Y^n-Y\|_{1;T\wedge a}\le E|\bar Y^n_{T\wedge a}-Y_{T\wedge a}|+\|(\bar Y^n-L)^-\|_{1;T\wedge a}$. Combining the above arguments, we easily obtain the desired result.
\end{proof}

\begin{wn}
\label{wn5.prj}
Under the assumptions of Theorem \ref{tw5.prj},
\[
Y_n\rightarrow Y\quad \mbox{in ucp},
\]
where $(Y^n,M^n)$ is defined in Remark \ref{uw.dps}.
\end{wn}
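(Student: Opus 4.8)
The plan is to sandwich $Y^n$ between the two one-sided penalization schemes already introduced in Remark \ref{uw.dps} and to show that each of them converges to $Y$ in ucp. Recall from Remark \ref{uw.dps} that $\overline Y^n\le Y^n\le\underline Y^n$, where $(\overline Y^n,\overline M^n,\overline A^n)$ is the solution of $\overline{{\rm R}}{\rm BSDE}^{T}(\xi,\overline f_n,U)$ with $\overline f_n(t,y)=f(t,y)+n\eta_t(y-L_t)^-$, $(\underline Y^n,\underline M^n,\underline K^n)$ is the solution of $\underline{{\rm R}}{\rm BSDE}^{T}(\xi,\underline f_n,L)$ with $\underline f_n(t,y)=f(t,y)-n\eta_t(y-U_t)^+$, and moreover $\overline Y^n\nearrow Y$, $\underline Y^n\searrow Y$. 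Since for every $t$ the value $Y^n_t$ lies between $\overline Y^n_t$ and $\underline Y^n_t$, we have $|Y^n_t-Y_t|\le|\overline Y^n_t-Y_t|+|\underline Y^n_t-Y_t|$, and hence $\sup_{t\le a}|Y^n_t-Y_t|\le\sup_{t\le a}|\overline Y^n_t-Y_t|+\sup_{t\le a}|\underline Y^n_t-Y_t|$ for every $a\ge 0$; it therefore suffices to prove that $\overline Y^n\to Y$ and $\underline Y^n\to Y$ in ucp.

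For the first of these, I would simply invoke Theorem \ref{tw5.prj} with the constant approximating sequence $\xi_n\equiv\xi$. This choice is admissible, and then the scheme $\bar Y^n$ of that theorem is exactly the present $\overline Y^n$, so Theorem \ref{tw5.prj} yields $\overline Y^n\to Y$ in ucp; recall that the hypothesis $^pL\ge L_-$ is what makes the argument run, while $^pU\le U_-$ is used, via \cite[Proposition 4.3]{K:SPA2}, to guarantee that $\overline A^n$ is continuous.

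For the second, I would pass to the reflected problem via $y\mapsto -y$. Setting $\tilde\xi=-\xi$, $\tilde L=-U$, $\tilde U=-L$, $\tilde f(t,y)=-f(t,-y)$ and $\tilde f_n(t,y)=\tilde f(t,y)+n\eta_t(y-\tilde L_t)^-$, one checks from the very definition of $\overline{{\rm R}}{\rm BSDE}$ that $-\underline Y^n$ is the first component of the solution of $\overline{{\rm R}}{\rm BSDE}^{T}(\tilde\xi,\tilde f_n,\tilde U)$; in other words, $\{-\underline Y^n\}$ is precisely the lower-penalization scheme of Theorem \ref{tw5.2}(ii) for the data $(\tilde\xi,\tilde f,\tilde L,\tilde U)$ with constant terminal value. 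One verifies that $\tilde f$ inherits \mbox{\rm(H1)--(H4)} with the same $\mu\le 0$ (take $-S$ as the process in (H1)), that the class (D) requirements carry over to $\tilde L^+=U^-$ and $\tilde U^-=L^+$, that the same $\eta$ is admissible, and that the projection conditions become ${}^p\tilde L=-{}^pU\ge -U_-=\tilde L_-$ and ${}^p\tilde U=-{}^pL\le -L_-=\tilde U_-$. Thus Theorem \ref{tw5.prj} applies to the reflected data and gives $-\underline Y^n\to\tilde Y$ in ucp, where $\tilde Y$ is the first component of the solution of RBSDE$^{T}(\tilde\xi,\tilde f,\tilde L,\tilde U)$; but $-\underline Y^n\nearrow -Y$ for every $t$ by Remark \ref{uw.dps}, so $\tilde Y=-Y$, i.e.\ $\underline Y^n\to Y$ in ucp. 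Together with the first step and the sandwich inequality, this proves $Y^n\to Y$ in ucp.

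The one delicate point --- and the one I expect to be the main, though essentially routine, obstacle --- is the bookkeeping in the reflection step: checking that $y\mapsto -y$ turns the upper-barrier penalization $-n\eta_t(y-U_t)^+$ into the lower-barrier penalization $+n\eta_t(y-(-U_t))^-$, that $\tilde f(t,y)=-f(t,-y)$ still satisfies all of \mbox{\rm(H1)--(H4)} with $\mu\le 0$, and that the pair $({}^pL\ge L_-,\ {}^pU\le U_-)$ passes to the analogous pair for the barriers $(-U,-L)$, so that the full list of hypotheses of Theorem \ref{tw5.prj} is genuinely satisfied by the reflected data.
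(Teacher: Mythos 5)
Your proposal is correct and follows essentially the same route as the paper: the paper's proof simply points back to the sandwich $\overline Y^n\le Y^n\le\underline Y^n$ from Remark \ref{uw.dps}, with Theorem \ref{tw5.prj} giving ucp convergence of $\overline Y^n$ and the symmetric (reflected) instance of that theorem giving ucp convergence of $\underline Y^n$. Your verification that the data $(-\xi,\ -f(\cdot,-\cdot),\ -U,\ -L)$ satisfy (H1)--(H4) and the projection conditions is exactly the bookkeeping the paper leaves implicit.
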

\begin{proof}
See the reasoning in Remark \ref{uw.dps}.
\end{proof}

\section{Dynkin games, RBSDEs and nonlinear $f$-expectation}
\label{sec5}

In this section, we  assume additionally that $L,U$ are of class (D) (not merely $L^+$, $U^-$).
As in Section \ref{sec4} terminal time $T$ is an arbitrary stopping time.
\subsection{Dynkin games and RBSDEs}
\begin{tw}
\label{tw.obs}
Let  $(Y,\Gamma)$ be a solution of {\rm RBSDE}$^T(\xi,f,L,U)$. Assume additionally that  $E\int_0^T|f(r,Y_r)|\,dr<\infty$. Then for every $\alpha\in\TT$,
\begin{align}
\label{eq4.5}
Y_\alpha= \esssup_{\sigma\ge\alpha}\essinf_{\tau\ge\alpha}J_\alpha(\tau,\sigma)
=\essinf_{\tau\ge\alpha}\esssup_{\sigma\ge\alpha}J_\alpha(\tau,\sigma),
\end{align}
where
\begin{align}
\label{eq.vf}
J_\alpha(\tau,\sigma)=E\Big(\int_\alpha^{\tau\wedge\sigma}f(r,Y_r)\,dr
+L_\sigma\mathbf{1}_{\sigma<\tau}
+U_\tau\mathbf{1}_{\tau\le\sigma<T}+\xi\mathbf{1}_{\sigma=\tau=T}|\FF_\alpha\Big).
\end{align}
Moreover, for all $\sigma,\tau\in \TT_\alpha$,
\begin{equation}
\label{eq5.aagg}
J_\alpha(\tau_\varepsilon,\sigma)-\varepsilon\le Y_\alpha\le J_\alpha(\tau,\sigma_\varepsilon)+\varepsilon,
\end{equation}
where
\begin{equation}
\label{eq5.4}
\tau_\varepsilon=\inf\{t\ge\alpha: Y_t\ge U_t-\varepsilon\}\wedge T,\qquad
\sigma_\varepsilon=\inf\{t\ge\alpha: Y_t\le L_t+\varepsilon\}\wedge T.
\end{equation}
\end{tw}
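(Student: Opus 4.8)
The plan is to prove the value-process identity (\ref{eq4.5}) together with the two-sided estimate (\ref{eq5.aagg}), working locally along the $\ell$-system and then patching the pieces together, in the spirit of the arguments already used for Theorems \ref{tw.unq} and \ref{tw.stb}. Since $E\int_0^T|f(r,Y_r)|\,dr<\infty$, the process $Y+\int_0^\cdot f(r,Y_r)\,dr$ is, on each $[\tau,\gamma_\tau\}$, a (special) semimartingale whose predictable finite variation part is $\Gamma^v(\tau)$, with $\Gamma^v(\tau)=\Gamma^{v,+}(\tau)-\Gamma^{v,-}(\tau)$ and the minimality condition (d) of Definition \ref{df.main} in force. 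The first step is to fix $\alpha\in\TT$ and $\varepsilon>0$, define $\tau_\varepsilon,\sigma_\varepsilon$ by (\ref{eq5.4}), and establish the left inequality in (\ref{eq5.aagg}): for an arbitrary $\sigma\in\TT_\alpha$ one writes the equation for $Y$ on the interval $[\alpha,\tau_\varepsilon\wedge\sigma]$ (more precisely on the approximating intervals $[\alpha,\gamma^{k,l}_\alpha\wedge\tau_\varepsilon\wedge\sigma]$, to stay inside the region where $Y$ is a semimartingale) and takes conditional expectation given $\FF_\alpha$. On $[\alpha,\tau_\varepsilon)$ one has $Y<U-\varepsilon$, so $dR^-=d\Gamma^{v,-}(\alpha)=0$ there by (d), while $d\Gamma^{v,+}(\alpha)\ge 0$; hence the drift coming from $\Gamma^v$ is nonnegative up to an $\varepsilon$-error absorbed at the terminal point, and the local martingale part integrates to $0$ after localization (using that $Y$ is of class (D)). The terminal value at $\tau_\varepsilon\wedge\sigma$ is estimated from below by $L_\sigma\mathbf 1_{\sigma<\tau_\varepsilon}+U_{\tau_\varepsilon}\mathbf 1_{\tau_\varepsilon\le\sigma<T}+\xi\mathbf 1_{\tau_\varepsilon=\sigma=T}$ minus $\varepsilon$, because on $\{\tau_\varepsilon\le\sigma\}$ we have $Y_{\tau_\varepsilon}\ge U_{\tau_\varepsilon}-\varepsilon$ and on $\{\sigma<\tau_\varepsilon\}$ trivially $Y_\sigma\ge L_\sigma$. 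This yields $Y_\alpha\ge J_\alpha(\tau_\varepsilon,\sigma)-\varepsilon$ for every $\sigma$. Symmetrically (or by the $(-Y,\tilde f,-U,-L)$ transformation) one gets $Y_\alpha\le J_\alpha(\tau,\sigma_\varepsilon)+\varepsilon$ for every $\tau$, which is the right inequality in (\ref{eq5.aagg}).

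The second step is to pass from (\ref{eq5.aagg}) to (\ref{eq4.5}). From the left inequality, $\essinf_{\tau}J_\alpha(\tau,\sigma)\le J_\alpha(\tau_\varepsilon,\sigma)\le Y_\alpha+\varepsilon$ fails to help directly; rather one argues: for every $\sigma$, $Y_\alpha\ge J_\alpha(\tau_\varepsilon,\sigma)-\varepsilon\ge \essinf_{\tau\ge\alpha}J_\alpha(\tau,\sigma)-\varepsilon$, so $Y_\alpha\ge \esssup_\sigma\essinf_\tau J_\alpha(\tau,\sigma)-\varepsilon$, and since the right-hand side of (\ref{eq5.aagg}) gives $Y_\alpha\le \inf_\tau\esssup_\sigma J_\alpha(\tau,\sigma)+\varepsilon\le \esssup_\sigma J_\alpha(\tau,\sigma_\varepsilon)+\varepsilon$ for each $\tau$. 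Combining, $\esssup_\sigma\essinf_\tau J_\alpha \ge Y_\alpha-\varepsilon$ and $\essinf_\tau\esssup_\sigma J_\alpha\le Y_\alpha+\varepsilon$; together with the trivial minimax inequality $\esssup_\sigma\essinf_\tau J_\alpha\le \essinf_\tau\esssup_\sigma J_\alpha$ and letting $\varepsilon\downarrow 0$ we obtain that both quantities equal $Y_\alpha$, which is (\ref{eq4.5}).

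The third step is the measurability/patching bookkeeping: one must check that $\tau_\varepsilon,\sigma_\varepsilon$ are genuine stopping times (debut of the progressively measurable sets $\{Y\ge U-\varepsilon\}$, $\{Y\le L+\varepsilon\}$ under the usual conditions), that the various conditional expectations in $J_\alpha$ are well defined (which is exactly where $E\int_0^T|f(r,Y_r)|\,dr<\infty$, together with $L^+,U^-$ of class (D) and $Y$ of class (D), is used so that $L_\sigma\mathbf 1_{\sigma<\tau},U_\tau\mathbf 1_{\tau\le\sigma<T},\xi$ are integrable), and that the localization along $\{\gamma^{k,l}_\alpha\}$ together with $\limsup_a L_{T\wedge a}\le\xi\le\liminf_a U_{T\wedge a}$ and (\ref{eq5.1}) lets one let $k,l\to\infty$ and $a\to\infty$ to recover the identity on the full interval $[\alpha,\tau_\varepsilon\wedge\sigma]$, as in the proof of Theorem \ref{tw5.2}. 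The main obstacle is precisely this step taken at the boundary of $\{L=U\}$: on the set $\Lambda_\alpha$ the interval $[\alpha,\gamma_\alpha\}$ is half-open and $Y$ need not be a semimartingale up to $\gamma_\alpha$, so one has to verify that the terminal contributions at $\gamma_\alpha$ (produced by jumps of $L$ or $U$) are controlled — here one uses that on $\Lambda_\alpha$ one has $L_{\gamma_\alpha-}=U_{\gamma_\alpha-}$, forcing $Y_{\gamma_\alpha-}=L_{\gamma_\alpha-}=U_{\gamma_\alpha-}$, so the left-limit terminal value already has the required form and the argument closes, exactly the case analysis (Cases I, II(a)--(d)) carried out in the proofs of Theorems \ref{tw.unq} and \ref{tw.stb}. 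Everything else is a routine application of the Tanaka--Meyer formula and class (D) localization.
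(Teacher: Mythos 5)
Your overall route is the paper's: localize the equation for $Y$ on $[\alpha,\tau_\varepsilon\wedge\sigma\wedge\gamma^k_\alpha]$, use the minimality condition (d) of Definition \ref{df.main} to kill $d\Gamma^{v,-}(\alpha)$ there (since $U_{r-}-Y_{r-}\ge\varepsilon$ before $\tau_\varepsilon$), drop the nonnegative increment of $\Gamma^{v,+}(\alpha)$, take conditional expectations after localizing the martingale part with a fundamental sequence, let $k\rightarrow\infty$, and finally read off (\ref{eq4.5}) from the two-sided bound (\ref{eq5.aagg}) by the standard minimax squeeze. All of that matches the paper's proof, as does the symmetric argument for the pair $(\tau,\sigma_\varepsilon)$.

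The one step you close by the wrong mechanism is the convergence $Y_{\theta_k}\rightarrow Y_{\tau_\varepsilon\wedge\sigma}$ with $\theta_k=\tau_\varepsilon\wedge\sigma\wedge\gamma^k_\alpha$ on the set $\Lambda_\alpha$. You propose to use that $Y_{\gamma_\alpha-}=L_{\gamma_\alpha-}=U_{\gamma_\alpha-}$, so that ``the left-limit terminal value already has the required form.'' But the payoff in $J_\alpha(\tau_\varepsilon,\sigma)$ involves $U_{\tau_\varepsilon}$ (resp.\ $L_\sigma$) evaluated at the stopping time itself, not a left limit; if $U$ jumps upward at $\gamma_\alpha$ and $\tau_\varepsilon\wedge\sigma=\gamma_\alpha$, knowing only that $Y_{\theta_k}\rightarrow U_{\gamma_\alpha-}$ does not yield the lower bound $Y_\alpha\ge J_\alpha(\tau_\varepsilon,\sigma)-\varepsilon$. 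What actually closes this case in the paper is the observation that on $\Lambda_\alpha$ one has $\tau_\varepsilon<\gamma_\alpha$ strictly: since $U-L$ has left limit $0$ at $\gamma_\alpha$ and $L\le Y\le U$, the set $\{Y\ge U-\varepsilon\}$ is hit before $\gamma_\alpha$, hence $\gamma^k_\alpha\ge\tau_\varepsilon\wedge\sigma$ for large $k$ and the localization terminates after finitely many steps, so the half-open interval never enters. The only genuinely non-terminating case is $\omega\notin\Lambda_\alpha$ with $\gamma_\alpha=\infty$, where $\theta_k\rightarrow\infty$, $\tau_\varepsilon\wedge\sigma=\infty$ and one invokes $Y_{T\wedge a}\rightarrow\xi$. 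With this correction your plan coincides with the paper's argument; the remaining bookkeeping (that $\tau_\varepsilon,\sigma_\varepsilon$ are debuts of progressively measurable sets, and that $E\int_0^T|f(r,Y_r)|\,dr<\infty$ together with the class (D) hypotheses makes $J_\alpha$ well defined) is routine, as you say.
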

\begin{proof}
Let $\tau, \sigma\in\TT_\alpha$. It is clear that $\sigma_\varepsilon, \tau_\varepsilon\le \gamma_\alpha$. Let $\{\delta_n\}$
be a fundamental sequence for the local martingale $\Gamma^m(\alpha)$ on $[\alpha,\gamma_\alpha\}$, and let
\[
\theta_k=\tau_\varepsilon\wedge\sigma\wedge\gamma^k_\alpha.
\]
By the minimality condition on $U$ (see Definition \ref{df.main}(d)),
\begin{align}
\label{eq.wenyh}
Y_{\alpha}=Y_{\theta_k\wedge\delta_n}+\int_{\alpha}^{\theta_k\wedge\delta_n} f(r,Y_r)\,dr
+\int_{\alpha}^{\theta_k\wedge\delta_n}\,d\Gamma^{v,+}_r(\alpha)-\int_{\alpha}^{\theta_k\wedge\delta_n}\,dM_r.
\end{align}
Since $Y$ is of class (D), taking the conditional expectation with respect to $\FF_\alpha$ of both sides of the above equality and then letting $n\rightarrow \infty$, we get (observe that $(\theta_k\wedge\delta_n)(\omega)=\theta_k(\omega),\, n\ge n_0(\omega)$)
\begin{equation}
\label{eq5.ccaa}
Y_\alpha=E\Big(Y_{\theta_k}+\int_\alpha^{\theta_k}f(r,Y_r)\,dr
+\int_\alpha^{\theta_k}d\Gamma^{v,+}_r(\alpha)|\FF_{\alpha}\Big).
\end{equation}
As $k\rightarrow\infty$, we have
\begin{equation}
\label{eq4.7}
Y_{\theta_k}\rightarrow Y_{\tau_\varepsilon\wedge\sigma}.
\end{equation}
To see this, let us consider two cases: (a) $\theta_k=\tau_\varepsilon\wedge\sigma$ for some  $k\ge k_0$ ($k_0$ depends on $\omega$), and (b) $\theta_k<\tau_\varepsilon\wedge\sigma$, $k\ge1$. Let us fix $\omega\in\Omega$. It is clear that (\ref{eq4.7}) is satisfied  in case (a).
In case (b), $\omega\notin \Lambda_\alpha$ for  otherwise we would have $\tau_\varepsilon <\gamma_\alpha$ (since $L_{\gamma_\alpha-}=U_{\gamma_\alpha-}$ if $\omega\in \Lambda_\alpha$),  which in turn implies (a) since $\gamma^k_\alpha\nearrow \gamma_\alpha$. Hence, in case (b), $\gamma^k_\alpha<\gamma_\alpha$, $k\ge 1$ and $\omega\notin \Lambda_\alpha$. This is possible only if $\gamma_\alpha=\infty$, so $\gamma_\alpha^k\rightarrow \infty$. Since $\theta_k=\gamma^k_\alpha$ in case (b), it follows that $\theta_k\rightarrow \infty$ and $\tau_\varepsilon\wedge\sigma=\infty$, which implies that $Y_{\theta_k} \rightarrow\xi=Y_{\tau_\varepsilon\wedge\sigma}$, i.e. (\ref{eq4.7}) is satisfied. Letting $k\rightarrow\infty$  in (\ref{eq5.ccaa}), using (\ref{eq4.7}) and the fact that $\theta_k\rightarrow \tau_\varepsilon\wedge\sigma$, we  get
\[
Y_\alpha\ge E\Big(Y_{\tau_\varepsilon\wedge \sigma}+\int_\alpha^{\tau_\varepsilon\wedge \sigma}f(r,Y_r)\,dr |\FF_{\alpha}\Big).
\]
By the definition of $\tau_\varepsilon$ and the fact that $Y\ge L$, we conclude from the above inequality that
\begin{align*}
Y_\alpha&\ge E\Big(Y_{\tau_\varepsilon}\mathbf{1}_{\tau_\varepsilon\le \sigma<T}+Y_{\sigma}\mathbf{1}_{\sigma<\tau_\varepsilon}+\xi\mathbf{1}_{\tau_\varepsilon=\sigma=T}+\int_\alpha^{\tau_\varepsilon\wedge \sigma}f(r,Y_r)\,dr |\FF_{\alpha}\Big)\\&\ge
E\Big((U_{\tau_\varepsilon}-\varepsilon)\mathbf{1}_{\tau_\varepsilon\le \sigma<T}+L_{\sigma}\mathbf{1}_{\sigma<\tau_\varepsilon}+\xi\mathbf{1}_{\tau_\varepsilon=\sigma=T}+\int_\alpha^{\tau_\varepsilon\wedge \sigma}f(r,Y_r)\,dr |\FF_{\alpha}\Big).
\end{align*}
Hence
\[
J_\alpha(\tau_\varepsilon,\sigma)-\varepsilon\le Y_\alpha.
\]
A similar argument applied to the  pair $\tau,\sigma_\varepsilon$ gives the second inequality in (\ref{eq5.aagg}). From (\ref{eq5.aagg}) we easily deduce (\ref{eq4.5}).
\end{proof}

\begin{wn}
\label{wn.viwn}
Assume that $Y$ is a progressively measurable process  such that we have $E\int_0^T|f(r,Y_r)|\,dr<\infty$ and (\ref{eq4.5}) holds for every $\alpha\in\TT$. Then $Y$ is a solution
of {\rm RBSDE}$^T(\xi,f,L,U)$.
\end{wn}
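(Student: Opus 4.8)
The plan is to reduce the assertion to the existence/uniqueness theory already developed, by freezing the generator along $Y$. Set $\tilde f(t):=f(t,Y_t)$; this is a generator independent of $y$ with $E\int_0^T|\tilde f(t)|\,dt<\infty$ by hypothesis. First I would verify that $\tilde f$ satisfies \mbox{\rm(H1)--(H4)} with $\mu=0\le 0$: taking $S\equiv 0$ (a difference of supermartingales of class (D)) gives $E\int_0^T|\tilde f(t,S_t)|\,dt=E\int_0^T|\tilde f(t)|\,dt<\infty$, so (H1) holds (here $E|\xi|<\infty$ and $\limsup_aL_{T\wedge a}\le\xi\le\liminf_aU_{T\wedge a}$ are the standing assumptions of Section~\ref{sec4}); since $y\mapsto\tilde f(t,y)$ is constant, (H2) holds with $\mu=0$, (H3) holds, and (H4) holds because $E\int_0^T|\tilde f(t)|\,dt<\infty$. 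Hence Theorem~\ref{tw5.2} yields a (unique) solution $(\bar Y,\bar\Gamma)$ of $\mathrm{RBSDE}^T(\xi,\tilde f,L,U)$.

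Next I would identify $\bar Y$ with $Y$ through the Dynkin game representation. Since $E\int_0^T|\tilde f(r,\bar Y_r)|\,dr=E\int_0^T|\tilde f(r)|\,dr<\infty$, Theorem~\ref{tw.obs} applies to $(\bar Y,\bar\Gamma)$ and gives, for every $\alpha\in\TT$,
\[
\bar Y_\alpha=\esssup_{\sigma\ge\alpha}\essinf_{\tau\ge\alpha}E\Big(\int_\alpha^{\tau\wedge\sigma}\tilde f(r)\,dr+L_\sigma\mathbf{1}_{\sigma<\tau}+U_\tau\mathbf{1}_{\tau\le\sigma<T}+\xi\mathbf{1}_{\sigma=\tau=T}|\FF_\alpha\Big).
\]
Because $\tilde f(r)=f(r,Y_r)$, the right-hand side is exactly $\esssup_{\sigma\ge\alpha}\essinf_{\tau\ge\alpha}J_\alpha(\tau,\sigma)$ with $J_\alpha$ as in (\ref{eq.vf}) for $Y$, which by the hypothesis (\ref{eq4.5}) equals $Y_\alpha$. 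Hence $\bar Y_\alpha=Y_\alpha$ a.s.\ for every $\alpha\in\TT$.

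To finish, I would pass from equality at all stopping times to indistinguishability: the progressive set $\{Y\neq\bar Y\}\cap[[0,T]]$ has a.s.\ empty sections, so by the section theorem it is evanescent, and $Y$ is indistinguishable on $[[0,T]]$ from the càdlàg, class-(D) process $\bar Y$; in particular $Y$ has a càdlàg version. Substituting $\tilde f(r)=f(r,Y_r)$ back into the equation, the constraint $L\le Y\le U$, the minimality conditions and the convergence $Y_{T\wedge a}\to\xi$ that $(\bar Y,\bar\Gamma)$ satisfies, and using $\int_0^T|f(r,Y_r)|\,dr<\infty$ from the hypothesis, we conclude that $(Y,\bar\Gamma)$ verifies every condition of Definition~\ref{df.mainun}, i.e.\ $Y$ is a solution of $\mathrm{RBSDE}^T(\xi,f,L,U)$.

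The step I expect to be most delicate is this last one: since $Y$ is only assumed progressively measurable, one cannot literally call $Y$ a solution until one has passed, via the section theorem in the form valid for progressive sets, to the càdlàg modification $\bar Y$. The only other thing to double-check is the routine point that $\tilde f$ genuinely satisfies (H1) — that is, that $E|\xi|<\infty$ and the choice $S\equiv 0$ are legitimate in the present framework.
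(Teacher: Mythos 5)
Your proposal is correct and follows essentially the same route as the paper: freeze the generator along $Y$ as $f_Y(t)=f(t,Y_t)$, invoke Theorem \ref{tw5.2} to produce the solution $\bar Y$ of RBSDE$^T(\xi,f_Y,L,U)$, apply Theorem \ref{tw.obs} to identify $\bar Y_\alpha$ with the value of the Dynkin game, and conclude $Y=\bar Y$. The extra details you supply (checking (H1)--(H4) for the frozen generator with $S\equiv 0$, and passing from equality at all stopping times to indistinguishability via the section theorem) are exactly the steps the paper leaves implicit.
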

\begin{proof}
By Theorem \ref{tw5.2}, there exists a unique solution $\bar Y$ to RBSDE$^T(\xi,f_Y,L,U)$ with
$f_Y(t)=f(t,Y_t)$. By Theorem \ref{tw.obs}, for every $\alpha\in\TT$,
\begin{align*}
\bar Y_\alpha= \esssup_{\sigma\ge\alpha}\essinf_{\tau\ge\alpha}J_\alpha(\tau,\sigma)
=\essinf_{\tau\ge\alpha}\esssup_{\sigma\ge\alpha}J_\alpha(\tau,\sigma),
\end{align*}
where $J_\alpha(\tau,\sigma)$ is given by (\ref{eq.vf}). Thus $Y=\bar Y$, so $Y$ is a solution of the problem RBSDE$^T(\xi,f,L,U)$.
\end{proof}

\begin{tw}
\label{tw.sp}
Assume that
\begin{equation}
\label{eq5.prj}
^pL\ge L_-\,,\quad ^pU\le U_-\,.
\end{equation}
Let  $(Y,\Gamma)$ be a solution of {\rm RBSDE}$^T(\xi,f,L,U)$ such that $E\int_0^T|f(r,Y_r)|\,dr<\infty$. Then for every $\alpha\in\TT$,
\begin{equation}
\label{eq5.abc}
Y_\alpha=J_\alpha(\sigma^*_\alpha,\tau^*_\alpha),
\end{equation}
where $J_\alpha$ is given by \mbox{\rm(\ref{eq.vf})} and
\begin{equation}
\label{eq5.sp}
\sigma^*_\alpha=\inf\{t\ge\alpha:Y_t=L_t\}\wedge T,\qquad \tau^*_\alpha=\inf\{t\ge\alpha:Y_t=U_t\}\wedge T.
\end{equation}
\end{tw}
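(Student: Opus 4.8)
The goal is to show that $(\sigma^*_\alpha,\tau^*_\alpha)$ is a saddle point of the Dynkin game associated with the payoff $J_\alpha$ of (\ref{eq.vf}), that is, that
\[
J_\alpha(\tau^*_\alpha,\sigma)\le Y_\alpha\le J_\alpha(\tau,\sigma^*_\alpha),\qquad \sigma,\tau\in\TT_\alpha ;
\]
taking $\tau=\tau^*_\alpha$ and $\sigma=\sigma^*_\alpha$ then gives (\ref{eq5.abc}). The crucial first step is to use (\ref{eq5.prj}) to prove that, for every $\tau\in\TT$, the predictable finite variation parts $\Gamma^{v,+}(\tau),\Gamma^{v,-}(\tau)$ of $\Gamma$ on $[\tau,\gamma_\tau\}$ (the ``reflection at $L$'' and ``reflection at $U$'' processes) are \emph{continuous}. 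Since these processes are predictable and of finite variation, it suffices to exclude jumps at predictable times. If $S\in(\tau,\gamma_\tau\}$ is predictable with $\Delta\Gamma^{v,+}_S(\tau)>0$, then condition (d) of Definition \ref{df.main} forces $Y_{S-}=L_{S-}$, and, because $L_-<U_-$ on $(\tau,\gamma_\tau\}$, also $\Delta\Gamma^{v,-}_S(\tau)=0$; writing $Y_t+\int_\tau^t f(r,Y_r)\,dr=Y_\tau-\Gamma^{v,+}_t(\tau)+\Gamma^{v,-}_t(\tau)-\Gamma^m_t(\tau)$ and taking predictable projections (the local martingale part contributing $0$) yields ${}^pY_S=Y_{S-}-\Delta\Gamma^{v,+}_S(\tau)<L_{S-}$, which contradicts ${}^pY_S\ge{}^pL_S\ge L_{S-}$, a consequence of $Y\ge L$ and $^pL\ge L_-$. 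Hence $\Gamma^{v,+}(\tau)$ is continuous; the argument for $\Gamma^{v,-}(\tau)$ is symmetric, using $^pU\le U_-$.

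With this in hand, I would analyse $N_t:=Y_t+\int_\alpha^t f(r,Y_r)\,dr$. On each localizing interval $[\alpha,\tau^*_\alpha\wedge\gamma^k_\alpha)$ one has $Y<U$, so $\{Y_-=U_-\}$ meets it in an at most countable set; as $\Gamma^{v,-}(\alpha)$ is continuous and carried by $\{Y_-=U_-\}$, it vanishes on $[\alpha,\tau^*_\alpha\wedge\gamma^k_\alpha]$, whence $N=Y_\alpha-\Gamma^{v,+}(\alpha)-\Gamma^m(\alpha)$ is a local supermartingale there. Concatenating along the $\ell$-system (at $\gamma_\tau<T$ off $\Lambda_\tau$ one has $Y_{\gamma_\tau}=L_{\gamma_\tau}=U_{\gamma_\tau}$, so the hitting times have already been reached; on $\Lambda_\tau$ one restarts from $\gamma_\tau$), and using that $Y$ is of class (D) with $E\int_0^T|f(r,Y_r)|\,dr<\infty$, one obtains that $N$ is a uniformly integrable supermartingale on $[\alpha,\tau^*_\alpha]$. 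Symmetrically, $N$ is a uniformly integrable submartingale on $[\alpha,\sigma^*_\alpha]$ and a uniformly integrable martingale on $[\alpha,\sigma^*_\alpha\wedge\tau^*_\alpha]$ — this last fact being the ``moreover'' assertion of the introduction.

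The verification is then standard. Optional sampling of the supermartingale $N$ at $\tau^*_\alpha\wedge\sigma$ gives $Y_\alpha\ge E\big(Y_{\tau^*_\alpha\wedge\sigma}+\int_\alpha^{\tau^*_\alpha\wedge\sigma}f(r,Y_r)\,dr\,\big|\,\FF_\alpha\big)$, and comparing $Y_{\tau^*_\alpha\wedge\sigma}$ with the barriers ($Y_\sigma\ge L_\sigma$ on $\{\sigma<\tau^*_\alpha\}$; $Y_{\tau^*_\alpha}=U_{\tau^*_\alpha}$ on $\{\tau^*_\alpha\le\sigma\}\cap\{\tau^*_\alpha<T\}$; $Y_T=\xi$ on $\{\sigma=\tau^*_\alpha=T\}$) yields $Y_\alpha\ge J_\alpha(\tau^*_\alpha,\sigma)$. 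The inequality $Y_\alpha\le J_\alpha(\tau,\sigma^*_\alpha)$ follows in the same way from the submartingale property of $N$ on $[\alpha,\sigma^*_\alpha]$ together with $Y_{\sigma^*_\alpha}=L_{\sigma^*_\alpha}$ on $\{\sigma^*_\alpha<T\}$ and $Y\le U$. (One could instead pass to the limit $\varepsilon\downarrow0$ in the two-sided estimate of Theorem \ref{tw.obs}, but continuity of the reflection processes is then needed precisely to identify $\lim_\varepsilon\sigma_\varepsilon$ with $\sigma^*_\alpha$ on the event where $\{\sigma_\varepsilon\}$ does not stabilize.)

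The main obstacle is the continuity of the reflection processes in the first step: this is exactly where (\ref{eq5.prj}) enters, and it is the mechanism that excludes the jumps on the boundary of $\{L=U\}$ responsible for the pathologies of Examples \ref{dex.2}--\ref{dex.3}. The remaining work — concatenating the (sub/super)martingale property along the $\ell$-system so as to globalize it, and the bookkeeping of the terminal time $T$ inside the indicators defining $J_\alpha$ — is technical rather than deep.
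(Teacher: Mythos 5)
Your first step is exactly the paper's key lemma: the paper also excludes jumps of $\Gamma^{v,+}$ (resp.\ $\Gamma^{v,-}$) at predictable times by combining the minimality condition (which forces $Y_{S-}=L_{S-}$ there), the identity $Y_S-L_{S-}+\Delta\Gamma^{v,+}_S=\Delta\Gamma^m_S$, the vanishing of $E\mathbf{1}_{\{S<\infty\}}\Delta\Gamma^m_S$, and $^pL\ge L_-$; it extracts the predictable time $S$ from the jump set by the Section Theorem, which is equivalent to your ``predictable finite variation, so only predictable jumps'' phrasing. So the mechanism by which (\ref{eq5.prj}) enters is correctly identified.

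The gap is in the globalization, which you dismiss as ``technical rather than deep''. The local Doob--Meyer decompositions on the intervals $[\tau,\gamma_\tau\}$ control $N$ strictly inside each interval, but on $\Lambda_\tau$ the interval is open at $\gamma_\tau$, so the jump $N_{\gamma_\tau}-N_{\gamma_\tau-}=Y_{\gamma_\tau}-Y_{\gamma_\tau-}$ at the predictable time $(\gamma_\tau)_{\Lambda_\tau}$ is controlled by \emph{neither} piece; this is precisely the jump on the boundary of $\{L_-=U_-\}$ that the whole paper is organized around, and your parenthetical ``one restarts from $\gamma_\tau$'' skips it. One can in fact handle it (on $\Lambda_\tau$ one has $Y_{\gamma_\tau-}=L_{\gamma_\tau-}=U_{\gamma_\tau-}$, so $E(Y_{\gamma_\tau}-Y_{\gamma_\tau-}\,|\,\FF_{\gamma_\tau-})\le{}^pU_{\gamma_\tau}-U_{\gamma_\tau-}\le 0$ for the supermartingale direction, and symmetrically with $L$), but this is a separate application of (\ref{eq5.prj}) that must be supplied. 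Worse, the restart times may accumulate (at an accumulation point $\beta$ one again has $L_{\beta-}=U_{\beta-}$), so the concatenation is a transfinite induction, not a countable union of closed intervals; note also that $\gamma_\alpha<\sigma^*_\alpha\wedge\tau^*_\alpha$ is possible, so the problem already arises for the weaker conclusion (\ref{eq5.abc}). The paper avoids all of this: it first proves the jump-exclusion statement only for \emph{global} semimartingale solutions (Step 1), then sandwiches $Y$ between the solutions $Y^{\underline\varepsilon},Y^{\overline\varepsilon}$ for the $\varepsilon$-separated barriers $(L-\varepsilon,U)$ and $(L,U+\varepsilon)$ — which are genuine semimartingales by Remark \ref{uw2.1} and Theorem \ref{tw4.ident} — deduces that $Y^{\overline\varepsilon}+\int_\alpha^\cdot f(r,Y_r)\,dr$ and $Y^{\underline\varepsilon}+\int_\alpha^\cdot f(r,Y_r)\,dr$ are respectively a sub- and a supermartingale of class (D) on $[\alpha,\sigma^*_\alpha\wedge\tau^*_\alpha]$, and passes to the limit with the stability estimate of Theorem \ref{tw.stb} to get the uniformly integrable martingale property of $Y+\int_\alpha^\cdot f(r,Y_r)\,dr$ on that interval, from which (\ref{eq5.abc}) follows by optional sampling as in your last step. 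If you want to keep your direct route, you must either carry out the transfinite concatenation with the extra jump estimate at each $(\gamma_\tau)_{\Lambda_\tau}$, or adopt the paper's perturbation-and-stability reduction.
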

\begin{proof}
Step 1. We assume additionally that $Y$ (or, equivalently, $\Gamma$) is a special semimartingale. Under this additional condition we will show that
\begin{equation}
\label{eq4.vth}
\int_\alpha^{\sigma^*_\alpha}\,d\Gamma^{v,+}_r=\int_\alpha^{\tau^*_\alpha}\,d\Gamma^{v,-}_r=0.
\end{equation}
By Theorem \ref{tw4.ident}, the triple $(Y,\Gamma^v,\Gamma^m)$ is a semimartingale solution of the equation RBSDE$^T(\xi,f,L,U)$. Let $\{\tau_k\}$ be a fundamental sequence for the local martingale $\Gamma^m_\cdot-\Gamma^m_\alpha$ on $[[\alpha,T]]$. We set
\[
\theta_{k}= \tau^*_\alpha\wedge\sigma^*_\alpha\wedge \tau_k,
\]
and then
\[
A_k=\{(t,\omega)\in ]]\alpha,\theta_k]]: Y_{t-}(\omega)=L_{t-}(\omega),\, \Delta \Gamma^{v,+}_t(\alpha)(\omega)>0\},
\]
\[
B_k=\{(t,\omega)\in ]]\alpha,\theta_k]]: Y_{t-}(\omega)=U_{t-}(\omega),\, \Delta \Gamma^{v,-}_t(\alpha)(\omega)>0\}.
\]
We will show that $P(\Pi(A_k))=P(\Pi(B_k))=0$. Assume that $P(\Pi(A_k))>0$.  Since $A_k$ is predictable, by the Section Theorem, for every $\varepsilon>0$  there exists a
predictable stopping time $\tau$ (depending on $k,\varepsilon$) such that
\begin{equation}
\label{eq5.sap1}
[[\tau]]\subset A_k,\quad P(\Pi(A_k))\le P(\tau<\infty)+\varepsilon.
\end{equation}
Observe that on the set $\{\tau<\infty\}$ we have
\begin{equation}
\label{eq5.sap2}
Y_\tau-L_{\tau-}+\Delta \Gamma^{v,+}_\tau=\Delta \Gamma^m_\tau.
\end{equation}
Since $\tau$ is predictable and  $L_{\tau}\le Y_{\tau}$, we have $E\mathbf{1}_{\{\tau<\infty\}}(Y_{\tau}-L_{\tau-})\ge 0$  by (\ref{eq5.prj}).
By predictability of $\tau$, we also have $E\mathbf{1}_{\{\tau<\infty\}}\Delta \Gamma^m_\tau=0$. Hence, by  (\ref{eq5.sap2}), $E\mathbf{1}_{\{\tau<\infty\}}\Delta \Gamma^{v,+}_\tau=0$. Therefore $P(\Pi(A_k))=0$ by (\ref{eq5.sap2}). In much the same way one can show that $P(\Pi(B_k))=0$. From this and Definition \ref{df2.dfsd}(c) we get (\ref{eq4.vth}).

Step 2. The general case.
Let   $(Y^{\overline\varepsilon},\Gamma^{\overline\varepsilon})$ be a unique  solution of the linear problem  RBSDE$^T(\xi,f(\cdot,Y),L,U+\varepsilon)$, and $(Y^{\underline\varepsilon},\Gamma^{\underline\varepsilon})$ be a unique solution of the linear problem
RBSDE$^T(\xi,f(\cdot,Y),L-\varepsilon,U)$ (in both cases $Y$ is frozen, where $Y$ is the solution to RBSDE$^T(\xi,f,L,U)$).
By Remark \ref{uw2.1} and Theorem \ref{tw4.ident}, $Y^{\underline\varepsilon}, Y^{\overline\varepsilon}$ are special semimartingales and $(Y^{\overline\varepsilon},\Gamma^{\overline\varepsilon,v},\Gamma^{\overline\varepsilon,m})$, $(Y^{\underline\varepsilon},\Gamma^{\underline\varepsilon,v},\Gamma^{\underline\varepsilon,m})$ are  semimartingale solutions. Moreover,
by Theorem \ref{tw.unq}, $Y^{\underline\varepsilon}\le Y\le Y^{\overline\varepsilon}$. Hence $\tau^{*,\underline\varepsilon}_\alpha\ge \tau^*_\alpha$ and $\sigma^{*,\overline\varepsilon}_\alpha\ge \sigma^*_\alpha$, where
\[
\tau^{*,\underline\varepsilon}_\alpha
=\inf\{t\ge\alpha:Y^{\underline\varepsilon}_t=U_t\}\wedge T,\quad
\sigma^{*,\overline\varepsilon}_\alpha
=\inf\{t\ge\alpha:Y^{\overline\varepsilon}_t=L_t\}\wedge T.
\]
By the first step (see (\ref{eq4.vth})),
\[
Y^{\overline\varepsilon}_t=Y^{\overline\varepsilon}_\alpha-\int_\alpha^t f(r,Y_r)\,dr+\int_\alpha^t\,d\Gamma^{\overline\varepsilon,v,-}_r
+\int_\alpha^t\,d\Gamma^{\overline\varepsilon,m}_r,\quad t\in [\alpha,\tau^*_\alpha\wedge\sigma^*_\alpha],
\]
and
\[
Y^{\underline\varepsilon}_t=Y^{\underline\varepsilon}_\alpha-\int_\alpha^t f(r,Y_r)\,dr-\int_\alpha^t\,d\Gamma^{\underline\varepsilon,v,+}_r
+\int_\alpha^t\,d\Gamma^{\underline\varepsilon,m}_r,\quad t\in [\alpha,\tau^*_\alpha\wedge\sigma^*_\alpha].
\]
Therefore $Y^{\overline\varepsilon}+\int_\alpha^\cdot f(r,Y_r)\,dr$ is a  submartingale of class (D) on $[\alpha,\tau^*_\alpha\wedge\sigma^*_\alpha]$ and $Y^{\underline\varepsilon}+\int_\alpha^\cdot f(r,Y_r)\,dr$ is a supermartingale of class (D) on $[\alpha,\tau^*_\alpha\wedge\sigma^*_\alpha]$. By Theorem \ref{tw.stb}, $Y^{\overline\varepsilon}+\int_\alpha^\cdot f(r,Y_r)\,dr\rightarrow Y+\int_\alpha^\cdot f(r,Y_r)\,dr$ and $Y^{\underline\varepsilon}+\int_\alpha^\cdot f(r,Y_r)\,dr\rightarrow Y+\int_\alpha^\cdot f(r,Y_r)\,dr$  in the norm $\|\cdot\|_{1;\alpha,\tau^*_\alpha\wedge\sigma^*_\alpha}$ as $\varepsilon\searrow 0$. It follows that  $Y+\int_\alpha^\cdot f(r,Y_r)\,dr$ is a uniformly integrable martingale on $[\alpha,\tau^*_\alpha\wedge\sigma^*_\alpha]$. From this one can deduce (\ref{eq5.abc}).
\end{proof}

\begin{uw}
\label{uw.locint}
Assume that (H1), (H2), (\ref{eq5.prj}) are satisfied.  Let $(Y,\Gamma)$ be a solution of {\rm RBSDE}$^T(\xi,f,L,U)$. Then for every $\alpha\in\mathcal T$,
\begin{equation}
\label{eq4.locint}
E\int_\alpha^{\sigma^*_\alpha\wedge\tau^*_\alpha}|f(r,Y_r)|\,dr<\infty,
\end{equation}
where $\sigma^*_\alpha, \tau^*_\alpha$ are defined  by (\ref{eq5.sp}).
Moreover, the process $Y+\int_\alpha^\cdot f(r,Y_r)\,dr$ is a uniformly integrable martingale on the closed interval $[\alpha,\sigma^*_\alpha\wedge\tau^*_\alpha]$. To see this, we set
\[
\tau_k=\inf\{t\ge \alpha: \int_\alpha^t|f(r,Y_r)|\,dr\ge k\}\wedge T,\qquad \theta_k=\sigma^*_\alpha\wedge\tau^*_\alpha\wedge\tau_k.
\]
By Theorem \ref{tw.sp}, the process $Y+\int_\alpha^\cdot f(r,Y_r)\,dr$ is a uniformly integrable martingale on $[\alpha,\theta_k]$. Therefore $Y$ is the first component of the solution of BSDE$^{\alpha,\theta_k}(Y_{\theta_k},f)$. By (H1), (H2) and \cite[Theorem 2.9]{K:arx},
\[
E\int_\alpha^{\theta_k}|f(r,Y_r)|\,dr\le E|Y_{\theta_k}|+E|S_{\theta_k}|+E\int_\alpha^{\theta_k}|f(r,S_r)|\,dr+E\int_\alpha^{\theta_k}\,d|S^v|_r,
\]
where $S^v$ is the predictable finite variation part of the Doob-Meyer decomposition of $S$. Letting $k\rightarrow\infty$  and using  (H1), (H2) and the fact that $Y$ is of class (D) yields (\ref{eq4.locint}). From this we  easily conclude that the process $Y+\int_\alpha^\cdot f(r,Y_r)\,dr$ is a uniformly integrable martingale on $[\alpha,\sigma^*_\alpha\wedge\tau^*_\alpha]$.
\end{uw}

\begin{uw}
\label{uw.stt}
By Theorem \ref{tw.obs} and Remark \ref{uw.dps}, the value process $Y$ in the Dynkin game (\ref{eqi.1}) can be approximated by solutions $Y^n$ of the  penalized equation (\ref{eqi.5.5}). This kind of results had appeared in the literature much before the notion of reflected BSDEs was introduced. In \cite{Stettner3} (see also \cite{Stettner,Stettner1} for Markovian case) Stettner
proved that $Y$ given by (\ref{eqi.1}), but with $f\equiv 0$, $T=\infty$ and barriers of the following special form
\[
L_t=e^{- a t} \hat L_t,\quad U_t=e^{-at}\hat U_t,\quad t\ge 0,
\]
where $a>0$ and $\hat L,\hat U$ are bounded right-continuous adapted processes, can by approximated by
solutions of the following equation
\begin{align}
\label{eqi.7}
Y^n_t=nE\Big( \int_t^\infty (Y^n_r-L_r)^-\,dr\nonumber-\int_t^\infty (Y^n_r-U_r)^+\,dr|\FF_t\Big).
\end{align}
Observe that if we define $M^n$ as
\begin{align*}
M^n_t=nE\Big( \int_0^\infty (Y^n_r-L_r)^-\,dr\nonumber-\int_0^\infty (Y^n_r-U_r)^+\,dr|\FF_t\Big)-Y^n_0,
\end{align*}
then the pair $(Y^n,M^n)$ is a solution of the penalized BSDE (\ref{eqi.5.5}) with $f(r,y)\equiv 0$,
$\xi=0$ and $T=\infty$.
\end{uw}

\subsection{Nonlinear $f$-expectation and generalized Dynkin games}

We now  introduce the notion of the nonlinear expectation
\[
\EE^f_{\alpha,\beta}: L^1(\Omega,\FF_\beta;P)\rightarrow L^1(\Omega,\FF_\alpha;P)
\]
for $\alpha,\beta\in\TT$ such that $\alpha\le\beta$ and for $f$ satisfying (H1)--(H4) with $\mu\le 0$. For  $\xi\in L^1(\Omega,\FF_\beta;P)$, we put
\[
\EE^f_{\alpha,\beta}(\xi)= Y_\alpha,
\]
where $(Y,M)$ is the unique solution of BSDE$^\beta(\xi,f)$.

We say that a c\`adl\`ag process $X$ of class (D) is an $\EE^f$-supermartingale
(resp. $\EE^f$-submartingale) on $[\alpha,\beta]$
if $\EE^f_{\sigma,\tau}(X_\tau)\le X_\sigma$ (resp. $\EE^f_{\sigma,\tau}(X_\tau)\ge X_\sigma$) for all
$\tau,\sigma\in \TT$ such that $\alpha\le\sigma\le\tau\le\beta$.
Of course, $X$ is called an $\EE^f$-martingale on $[\alpha,\beta]$ if it is both $\EE^f$-supermartingale and $\EE^f$-submartingale on $[\alpha,\beta]$. For a given c\`adl\`ag process $V$ and stopping times $\alpha,\beta$ ($\alpha\le \beta$) we denote  by $|V|_{\alpha,\beta}$ the total variation of the process $V$ on $[\alpha,\beta]$.

\begin{stw}
\label{stw.fexp}
Assume that  $f$ satisfies \mbox{\rm(H1)--(H4)} with $\mu\le 0$ and $\alpha,\beta\in\TT$, $ \alpha\le\beta$.
\begin{enumerate}
\item[\rm(i)] Let $\xi\in L^1(\Omega,\FF_\beta;P)$ and $V$ be a c\`adl\`ag $\BF$-adapted finite
variation process such that  $V_\alpha=0$ and $E|V|_{\alpha,\beta}<\infty$. Then there exists a unique solution $(X,N)$
of  BSDE$^{\alpha,\beta}(\xi,f+dV)$.
Moreover,  if $V$ (resp. $-V$) is an increasing process, then $X$ is an $\EE^f$-supermartingale (resp. $\EE^f$-submartingale) on $[\alpha,\beta]$.
\item[\rm(ii)] If  $\xi_1,\xi_2\in L^1(\Omega,\FF_\beta;P)$ and $\xi_1\le\xi_2$, then $\EE^f_{\alpha,\beta}(\xi_1)\le\EE^f_{\alpha,\beta}(\xi_2)$.
\item[\rm(iii)] If $f_1,f_2$ satisfy \mbox{\rm(H1)--(H4)} with $\mu\le 0$, $\alpha,\beta_1,\beta_2\in\TT$, $\alpha\le\beta_1\le\beta_2$, $\xi_1\in L^1(\Omega,\FF_{\beta_1};P)$, $\xi_2\in L^1(\Omega,\FF_{\beta_2};P)$ then
\begin{align*}
|\EE^{f_1}_{\alpha,\beta_1}(\xi_1)-\EE^{f_2}_{\alpha,\beta_2}(\xi_2)|&\le E\Big(|\xi_1-\xi_2|+\int_\alpha^{\beta_1}|f^1(r,Y^1_r)-f^2(r,Y^1_r)|\,dr \\ &\quad+\int_{\beta_1}^{\beta_2}|f^2(r,Y^2_r)|\,dr|\FF_\alpha\Big),
\end{align*}
where $Y^1_t=\EE^{f^1}_{t\wedge\beta_1,\beta_1}(\xi_1)$, $Y^2_t=\EE^{f^2}_{t\wedge\beta_2,\beta_2}(\xi_2)$.
\end{enumerate}
\end{stw}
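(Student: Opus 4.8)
The plan is to handle the three parts separately, reducing (i) and (ii) to comparison arguments and proving (iii) by a gluing trick. For part (ii) I would simply invoke the comparison theorem for BSDEs: if $(Y^i,M^i)$ solves BSDE$^{\alpha,\beta}(\xi_i,f)$ and $\xi_1\le\xi_2$, then $Y^1\le Y^2$ under (H2) with $\mu\le0$ (see \cite[Proposition 3.1]{KR:JFA}, \cite[Theorem 2.8]{K:arx}), and evaluating at $t=\alpha$ gives $\EE^f_{\alpha,\beta}(\xi_1)\le\EE^f_{\alpha,\beta}(\xi_2)$. For part (i), fix $\sigma,\tau\in\TT$ with $\alpha\le\sigma\le\tau\le\beta$; on $[\sigma,\tau]$ the pair $(X,N)$ solves BSDE$^{\sigma,\tau}(X_\tau,f+dV)$, and if $(\bar X,\bar N)$ solves BSDE$^{\sigma,\tau}(X_\tau,f)$ then $\bar X_\sigma=\EE^f_{\sigma,\tau}(X_\tau)$. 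Applying the Tanaka--Meyer formula to $(\bar X-X)^+$ on $[\sigma,\tau]$, taking conditional expectation given $\FF_\sigma$, and using that the terminal values agree, that $f(r,\bar X_r)-f(r,X_r)\le0$ on $\{\bar X_r>X_r\}$ by (H2) with $\mu\le0$, and that $-\int_\sigma^\tau\mathbf{1}_{\{\bar X_{r-}>X_{r-}\}}\,dV_r\le0$ because $V$ is increasing, I obtain $(\bar X_\sigma-X_\sigma)^+\le0$, i.e.\ $\EE^f_{\sigma,\tau}(X_\tau)\le X_\sigma$, so $X$ is an $\EE^f$-supermartingale on $[\alpha,\beta]$ (when $\beta$ is unbounded one first localizes by a chain and uses that $X$ is of class (D)). The $\EE^f$-submartingale case follows by applying this to $(-X,-N)$, the generator $\tilde f(t,y)=-f(t,-y)$ (which again satisfies (H1)--(H4) with $\mu\le0$) and the increasing process $-V$, together with the identity $\EE^{\tilde f}_{\sigma,\tau}(-\zeta)=-\EE^f_{\sigma,\tau}(\zeta)$.

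The substance is part (iii), and the main obstacle is that the two nonlinear expectations live on the different intervals $[\alpha,\beta_1]\subset[\alpha,\beta_2]$, while the asserted bound must contain $f^1-f^2$ evaluated along $Y^1$ on $[\alpha,\beta_1]$ but $f^2$ evaluated along $Y^2$ on $[\beta_1,\beta_2]$; a single a priori estimate would force all generators to be evaluated along one and the same process. I would remove this by a gluing argument. Put $\bar f^1(t,y)=f^1(t,y)\mathbf{1}_{\{t\le\beta_1\}}$ (still satisfying (H1)--(H4) with $\mu\le0$, since $f^1$ is nonincreasing in $y$) and let $(\bar Y^1,\bar M^1)$ solve BSDE$^{\alpha,\beta_2}(\xi_1,\bar f^1)$. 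On $[\beta_1,\beta_2]$ the generator vanishes, so $\bar Y^1_t=E(\xi_1|\FF_t)$; since $\xi_1$ is $\FF_{\beta_1}$-measurable this gives $\bar Y^1_{\beta_1}=\xi_1$, hence on $[\alpha,\beta_1]$ the pair $(\bar Y^1,\bar M^1)$ solves BSDE$^{\alpha,\beta_1}(\xi_1,f^1)$ and, by uniqueness, $\bar Y^1=Y^1$ there; in particular $\EE^{f^1}_{\alpha,\beta_1}(\xi_1)=\bar Y^1_\alpha$. Putting $\hat Y=\bar Y^1-Y^2$ on $[\alpha,\beta_2]$ we obtain a c\`adl\`ag special semimartingale with $\hat Y_{\beta_2}=\xi_1-\xi_2$ and
\[
\hat Y_t=\hat Y_{\beta_2}+\int_t^{\beta_2}\hat g_r\,dr-\int_t^{\beta_2}d\hat N_r,\qquad \hat g_r=\bigl(f^1(r,Y^1_r)-f^2(r,Y^2_r)\bigr)\mathbf{1}_{\{r\le\beta_1\}}-f^2(r,Y^2_r)\,\mathbf{1}_{\{\beta_1<r\le\beta_2\}},
\]
where $\hat N$ is a local martingale null at $\alpha$.

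Then I would apply the Tanaka--Meyer formula to $|\hat Y|$ on $[\alpha,\beta_2]$ (again localizing by a chain and using that $\bar Y^1,Y^2$ are of class (D) when $\beta_2$ is unbounded) and take conditional expectation given $\FF_\alpha$, which yields $|\hat Y_\alpha|\le E\bigl(|\xi_1-\xi_2|+\int_\alpha^{\beta_2}\mbox{sgn}(\hat Y_{r-})\,\hat g_r\,dr\,|\,\FF_\alpha\bigr)$. On $\{r\le\beta_1\}$ one has $\hat Y_r=Y^1_r-Y^2_r$, so writing $f^1(r,Y^1_r)-f^2(r,Y^2_r)=\bigl(f^1(r,Y^1_r)-f^2(r,Y^1_r)\bigr)+\bigl(f^2(r,Y^1_r)-f^2(r,Y^2_r)\bigr)$ and using monotonicity of $f^2$ in $y$ to discard the second bracket against $\mbox{sgn}(\hat Y_{r-})$ for a.e.\ $r$, that part of the integrand is $\le|f^1(r,Y^1_r)-f^2(r,Y^1_r)|$; on $\{\beta_1<r\le\beta_2\}$ the integrand is trivially $\le|f^2(r,Y^2_r)|$. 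Since $\hat Y_\alpha=\EE^{f^1}_{\alpha,\beta_1}(\xi_1)-\EE^{f^2}_{\alpha,\beta_2}(\xi_2)$, this is exactly the claimed inequality. If its right-hand side is infinite there is nothing to prove, and otherwise all the integrals are finite---$E\int_{\beta_1}^{\beta_2}|f^2(r,Y^2_r)|\,dr<\infty$ by \cite[Theorem 2.8]{K:arx} applied to $Y^2$ on $[\beta_1,\beta_2]$---so the computation is legitimate. The only delicate points left are the localization and uniform-integrability bookkeeping for unbounded $\beta_2$ and the routine check that $\bar f^1$ inherits (H1)--(H4) with $\mu\le0$.
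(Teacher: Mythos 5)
Your proposal is correct, and for parts (i) and (ii) it is in substance the paper's own argument: the paper proves (ii) by citing the comparison theorem (Theorem \ref{tw.unq}) and proves (i) by observing that $(X,N)$ solves BSDE$^{\alpha,\tau}(X_\tau,f+dV)$ while $(X^\tau,N^\tau)$ solves BSDE$^{\alpha,\tau}(X_\tau,f)$ and again invoking comparison to get $X^\tau_\sigma\le X_\sigma$; you simply unwind that comparison into an explicit Tanaka--Meyer computation on $[\sigma,\tau]$, which is what the cited theorem's proof does anyway. The genuine difference is in (iii). The paper disposes of it with the single sentence ``Assertion (iii) follows from Theorem \ref{tw.stb}'', but that stability theorem is stated for two equations with one and the same terminal time, whereas here $\beta_1\le\beta_2$ may differ; moreover its bound evaluates $f^1-f^2$ along $Y^2$ rather than $Y^1$ (a harmless asymmetry, fixed by swapping roles). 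Your gluing device --- extending $f^1$ by zero past $\beta_1$, noting that the extended solution is the martingale $E(\xi_1|\FF_t)$ on $[\beta_1,\beta_2]$ and hence restricts to $Y^1$ on $[\alpha,\beta_1]$ by uniqueness, and then running the Tanaka--Meyer estimate for $|\bar Y^1-Y^2|$ on $[\alpha,\beta_2]$ --- is exactly the reduction the paper leaves implicit, and it produces the extra term $\int_{\beta_1}^{\beta_2}|f^2(r,Y^2_r)|\,dr$ in the asserted bound, which Theorem \ref{tw.stb} alone does not account for. So your write-up is more self-contained and actually supplies the step the paper's one-line proof of (iii) omits; the remaining bookkeeping you flag (localization by a chain, class (D), and the check that the truncated generator inherits (H1)--(H4) with $\mu\le 0$) is routine and handled correctly.
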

\begin{proof}
Assertion (iii) follows from Theorem \ref{tw.stb} and (ii) follows from Theorem \ref{tw.unq}. The existence part in (i)
follows from \cite[Theorem 2.9]{K:arx}. Now assume that $X$ is as in (i) and $V$ is an increasing process. Let $\sigma,\tau\in\TT$ be such that  $\alpha\le\sigma\le\tau\le\beta$, and let $(X^\tau,N^\tau)$ be a solution of BSDE$^{\alpha,\tau}(X_\tau,f)$. It is clear that $(X,N)$ is a solution of
BSDE$^{\alpha,\tau}(X_\tau,f+dV)$. Therefore, by Theorem \ref{tw.unq}, $X\ge X^\tau$ on $[\alpha,\tau]$. In particular,
$X_\sigma\ge X^\tau_\sigma$.  By the definition of the nonlinear expectation, $\EE^f_{\sigma,\tau}(X_\tau)=X^\tau_\sigma$, so $\EE^f_{\sigma,\tau}(X_\tau)\le X_\sigma$. A similar  reasoning  in the case where $-V$ is increasing gives the result.
\end{proof}

\begin{tw}
Assume that \mbox{\rm(H1)--(H4)} are satisfied with $\mu\le 0$.
\begin{enumerate}
\item[\rm{(i)}]
$(Y,\Gamma)$ is a solution of {\rm RBSDE}$^T(\xi,f,L,U)$ if and only if  for every $\alpha\in\TT$,
\begin{align}
\label{eq4.5f}
Y_\alpha= \esssup_{\sigma\ge\alpha}\essinf_{\tau\ge\alpha}J^f_\alpha(\tau,\sigma)
=\essinf_{\tau\ge\alpha}\esssup_{\sigma\ge\alpha}J^f_\alpha(\tau,\sigma),
\end{align}
where
\begin{align}
\label{eq.vff}
J^f_\alpha(\tau,\sigma)=\EE^f_{\alpha,\tau\wedge\sigma}
(L_\sigma\mathbf{1}_{\sigma<\tau}+U_\tau\mathbf{1}_{\tau\le\sigma<T}+\xi\mathbf{1}_{\sigma=\tau=T}).
\end{align}
\item[\rm{(ii)}]
Let $(Y,\Gamma)$ be a solution of {\rm RBSDE}$^T(\xi,f,L,U)$. Then for all $\sigma,\tau\in \TT_\alpha$ we have
\begin{equation}
\label{eq5.aaggf}
J^f_\alpha(\tau_\varepsilon,\sigma)-\varepsilon\le Y_\alpha\le J^f_\alpha(\tau,\sigma_\varepsilon)+\varepsilon,
\end{equation}
where $\tau_\varepsilon, \sigma_\varepsilon$ are defined by \mbox{\rm(\ref{eq5.4})}.
\end{enumerate}
\end{tw}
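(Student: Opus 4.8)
The plan is to derive part~(i) from part~(ii), and to prove~(ii) by running the argument of the proof of Theorem~\ref{tw.obs} with the conditional expectation $E(\,\cdot\mid\FF_\alpha)$ replaced by the nonlinear operator $\EE^f$, together with an extra localization that dispenses with the integrability hypothesis $E\int_0^T|f(r,Y_r)|\,dr<\infty$ imposed there.

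\emph{Reduction of (i) to (ii).} Suppose $(Y,\Gamma)$ is a solution of RBSDE$^T(\xi,f,L,U)$. Taking in the left inequality of (\ref{eq5.aaggf}) the $\esssup$ over $\sigma\ge\alpha$ and then the $\essinf$ over $\tau\ge\alpha$, and in the right inequality the $\essinf$ over $\tau\ge\alpha$ and then the $\esssup$ over $\sigma\ge\alpha$, we obtain
\[
\essinf_{\tau\ge\alpha}\esssup_{\sigma\ge\alpha}J^f_\alpha(\tau,\sigma)-\varepsilon\le Y_\alpha\le\esssup_{\sigma\ge\alpha}\essinf_{\tau\ge\alpha}J^f_\alpha(\tau,\sigma)+\varepsilon;
\]
letting $\varepsilon\downarrow0$ and using the elementary inequality $\esssup_{\sigma}\essinf_{\tau}\le\essinf_{\tau}\esssup_{\sigma}$ gives (\ref{eq4.5f}). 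For the converse, note that the right-hand side of (\ref{eq4.5f}) is built only from $\xi,f,L,U$ and the stopping times and does not involve $Y$; hence (\ref{eq4.5f}) determines $Y_\alpha$ for every $\alpha\in\TT$. By Theorem~\ref{tw5.2} there is a unique solution $(\bar Y,\bar\Gamma)$ of RBSDE$^T(\xi,f,L,U)$, which satisfies (\ref{eq4.5f}) by the implication just proved; therefore every $Y$ satisfying (\ref{eq4.5f}) coincides with $\bar Y$ at each $\alpha\in\TT$, and, by the Section Theorem, $Y$ is the first component of a solution.

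\emph{Proof of (ii).} Fix $\alpha,\sigma\in\TT$ and $\varepsilon>0$, put $\theta=\tau_\varepsilon\wedge\sigma$ and
\[
\zeta=L_\sigma\mathbf{1}_{\sigma<\tau_\varepsilon}+U_{\tau_\varepsilon}\mathbf{1}_{\tau_\varepsilon\le\sigma<T}+\xi\mathbf{1}_{\sigma=\tau_\varepsilon=T},
\]
so that $J^f_\alpha(\tau_\varepsilon,\sigma)=\EE^f_{\alpha,\theta}(\zeta)$; since $L,U$ are of class (D) and $\xi\in L^1$, we have $\zeta\in L^1$, so this is meaningful. Exactly as in the proof of Theorem~\ref{tw.obs}, $\theta\le\gamma_\alpha$ and on $\Lambda_\alpha$ one has $\tau_\varepsilon<\gamma_\alpha$ strictly (because $L_{\gamma_\alpha-}=U_{\gamma_\alpha-}$ on $\Lambda_\alpha$ forces $Y_t\ge U_t-\varepsilon$ for $t$ close to $\gamma_\alpha$ from the left); in particular the event $\Lambda_\alpha\cap\{\theta=\gamma_\alpha\}$ is empty. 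Moreover $\Gamma^{v,-}(\alpha)$ does not increase for $\alpha<r\le\theta$, since there $Y_{r-}<U_{r-}$; hence, by the Doob--Meyer decomposition of $\Gamma$ on $[\alpha,\gamma_\alpha\}$, for a chain $\{\tau_k\}$ on $[\alpha,\theta]$ obtained by intersecting $\gamma^k_\alpha$, a fundamental sequence for the local martingale $\Gamma^m(\alpha)$, and $\inf\{t\ge\alpha:\int_\alpha^t|f(r,Y_r)|\,dr\ge k\}$ — a genuine chain because $\int_0^T|f(r,Y_r)|\,dr<\infty$ a.s. and $\Lambda_\alpha\cap\{\theta=\gamma_\alpha\}=\emptyset$ — the pair $(Y,-\Gamma^m(\alpha))$ is a bona fide solution of BSDE$^{\alpha,\tau_k}(Y_{\tau_k},f+dV_k)$ on $[\alpha,\tau_k]$, where $V_k$ is $\Gamma^{v,+}(\alpha)$ stopped at $\tau_k$, which satisfies $(V_k)_\alpha=0$, $V_k$ increasing and $E|V_k|_{\tau_k}<\infty$. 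By Proposition~\ref{stw.fexp}(i), $Y$ is thus an $\EE^f$-supermartingale on $[\alpha,\tau_k]$, so $\EE^f_{\alpha,\tau_k}(Y_{\tau_k})\le Y_\alpha$. Letting $k\to\infty$ and using Proposition~\ref{stw.fexp}(iii), the fact that $Y$ is of class (D), and that $E\int_\alpha^\theta|f(r,\hat Y_r)|\,dr<\infty$ for the solution $\hat Y$ of BSDE$^{\alpha,\theta}(Y_\theta,f)$, we get $\EE^f_{\alpha,\theta}(Y_\theta)\le Y_\alpha$. A case analysis as in the proof of Theorem~\ref{tw.obs} shows $Y_\theta\ge\zeta-\varepsilon$, and therefore, by Proposition~\ref{stw.fexp}(ii) and (iii),
\[
Y_\alpha\ge\EE^f_{\alpha,\theta}(Y_\theta)\ge\EE^f_{\alpha,\theta}(\zeta-\varepsilon)\ge\EE^f_{\alpha,\theta}(\zeta)-\varepsilon=J^f_\alpha(\tau_\varepsilon,\sigma)-\varepsilon.
\]
The inequality $Y_\alpha\le J^f_\alpha(\tau,\sigma_\varepsilon)+\varepsilon$ is proved symmetrically, with $\sigma_\varepsilon\wedge\tau$ in place of $\theta$, $\Gamma^{v,-}(\alpha)$ in place of $\Gamma^{v,+}(\alpha)$, and Proposition~\ref{stw.fexp}(i) invoked to produce an $\EE^f$-submartingale.

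\emph{Main obstacle.} The only subtle point is the localization in the proof of~(ii): unlike in Theorem~\ref{tw.obs}, neither $f(\cdot,Y_\cdot)$ nor the predictable part $\Gamma^v(\alpha)$ need be integrable up to $\theta$, so the $\EE^f$-supermartingale property must be established on the approximating intervals $[\alpha,\tau_k]$ and then carried to the limit; the one genuinely non-routine verification is that $\{\tau_k\}$ is a chain on $[\alpha,\theta]$, which rests on the observation that $\Lambda_\alpha\cap\{\theta=\gamma_\alpha\}=\emptyset$. Everything else is a transcription of the proof of Theorem~\ref{tw.obs}, with $\EE^f$ in the role of $E(\,\cdot\mid\FF_\alpha)$ and the supermartingale (resp. submartingale) structure supplied by Proposition~\ref{stw.fexp}(i).
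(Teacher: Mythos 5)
Your proof is correct and follows essentially the same route as the paper's: the paper likewise transcribes the proof of Theorem \ref{tw.obs} with $\EE^f$ in place of the conditional expectation, imposes the extra integrability requirement $E\int_\alpha^{\delta_n}|f(r,Y_r)|\,dr+E\int_\alpha^{\delta_n}\,d\Gamma^{v,+}_r(\alpha)<\infty$ on the localizing sequence, and then applies Proposition \ref{stw.fexp}(i)--(iii) exactly as you do to pass to the limit and absorb the $\varepsilon$. The only difference is presentational: you spell out the chain construction and the sufficiency direction of (i) (via uniqueness and the Section Theorem), which the paper dismisses as obvious.
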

\begin{proof}
The proof of (ii) and the necessity part of  (i)  is similar to the proof of Theorem \ref{tw.obs}. The only difference is that the sequence $\{\delta_n\}$
defined in that proof should now  satisfy the  additional condition $E\int_\alpha^{\delta_n}|f(r,Y_r)|\,dr+E\int_\alpha^{\delta_n}\,d\Gamma^{v,+}_r(\alpha)<\infty$. Indeed, by (\ref{eq5.ccaa}) and Proposition \ref{stw.fexp}(i), we get
\[
\EE^f_{\alpha,\theta_k}(Y_{\theta_k})\le Y_\alpha.
\]
By the reasoning following (\ref{eq5.ccaa}), we know that $Y_{\theta_k}\rightarrow Y_{\tau_\varepsilon\wedge \sigma}$ as  $k\rightarrow \infty$. So, by Proposition \ref{stw.fexp}(iii) and the above inequality
\[
\EE^f_{\alpha,\tau_\varepsilon\wedge \sigma}(Y_{\tau_\varepsilon\wedge \sigma})\le Y_\alpha.
\]
By the definition of $\tau_\varepsilon$ and Proposition \ref{stw.fexp}(ii), we conclude from the above inequality
\[
\EE^f_{\alpha,\tau_\varepsilon\wedge\sigma}
(L_\sigma\mathbf{1}_{\sigma<\tau_\varepsilon}+U_{\tau_\varepsilon}\mathbf{1}_{\tau_\varepsilon\le\sigma<T}-\varepsilon+\xi\mathbf{1}_{\sigma=\tau_\varepsilon=T})\le Y_\alpha.
\]
From this and Proposition \ref{stw.fexp}(iii), we get the left-hand side inequality in (\ref{eq5.aaggf}).
An analogous reasoning applied to the pair $(\sigma_\varepsilon,\tau)$ gives the right-hand side inequality in (\ref{eq5.aaggf}). From (\ref{eq5.aaggf}), we easily get  (\ref{eq4.5f}). For the  sufficiency in (i) let us denote
by $G_\alpha$ the right-hand side of (\ref{eq4.5f}). By Theorem \ref{tw5.2} there exists a unique solution $(Y,\Gamma)$
to RBSDE$^T(\xi,f,L,U)$. By the necessity part in (i), $G_\alpha=Y_\alpha,\, \alpha\in \TT$, so $(G,\Gamma)$
is a solution to RBSDE$^T(\xi,f,L,U)$.
\end{proof}

\begin{stw}
\label{tw.spf}
Assume that \mbox{\rm (H1)--(H4)} with $\mu\le0$ hold true  and \mbox{\rm(\ref{eq5.prj})} is satisfied.
Let  $(Y,\Gamma)$ be a solution of {\rm RBSDE}$^T(\xi,f,L,U)$.  Then
for every $\alpha\in\TT$,
\begin{equation}
\label{eq5.abcf}
Y_\alpha=J^f_\alpha(\sigma^*_\alpha,\tau^*_\alpha),
\end{equation}
where $J^f_\alpha$ is defined by \mbox{\rm(\ref{eq.vff})} and $\sigma^*_\alpha, \tau^*_\alpha$ are defined by \mbox{\rm(\ref{eq5.sp})}.
\end{stw}
\begin{proof}
By Remark \ref{uw.locint} and  Proposition \ref{stw.fexp}(i), $\EE^f_{\alpha,\sigma^*_\alpha\wedge\tau^*_\alpha}(Y_{\sigma^*_\alpha\wedge\tau^*_\alpha})=Y_\alpha$.
From this  we get (\ref{eq5.abcf}).
\end{proof}

\section{Existence result for RBSDEs: the general result}
\label{sec7}

Our main existence theorem for RBSDEs (Theorem \ref{tw5.2}) says that under (H1)--(H4) there exists a unique solution
to RBSDE$^T(\xi,f,L,U)$. In this section, we will show that under the assumption that $L,U$ are of class (D) (not merely $L^+, U^-$)
one can dispense with condition (\ref{eq1.11}), which is part of (H1), and still  get an existence result for RBSDE$^T(\xi,f,L,U)$.
This result is interesting since it
implies
that there exist data $(\xi,f,T)$ such that there is no solution of BSDE$^T(\xi,f)$ and, at the same time, for every $\mathbb F$-adapted c\`adl\`ag barriers $L,U$ of class (D)  ($L\le U$) there exists a solution to RBSDE$^T(\xi,f,L,U)$ (see Example \ref{prz4.1.09}). As in the previous section $T$ is an arbitrary stopping time.

\begin{stw}
\label{ex.gen}
Assume that $E|\xi|<\infty$,  \mbox{\rm(H2)--(H4)} are satisfied with $\mu\le0$. Then there exists a  unique solution of  {\rm RBSDE}$^T(\xi,f,L,U)$.
\end{stw}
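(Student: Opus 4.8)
The plan is to obtain uniqueness from the comparison theorem and existence by approximating the generator in time — rather than by penalising the barriers, which is the route of Theorem~\ref{tw5.2} and needs (H1) — and then letting the truncation disappear; the point is that once $L$ and $U$ are \emph{both} of class (D), every approximating solution is trapped between them.

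Uniqueness is immediate: comparing two solutions in both directions via Theorem~\ref{tw.unq} (valid for unbounded $T$ with $\mu\le 0$ by Remark~\ref{uw.gen1}) forces them to agree. For existence I would first reduce to a generator with a uniform bound: since any solution lies between the barriers, replacing $f(t,y)$ by $f(t,(y\vee L_t)\wedge U_t)$ changes neither the hypotheses (H2)--(H4) nor the notion of a solution, and after this reduction $|f(t,y)|\le g(t):=|f(t,L_t)|+|f(t,U_t)|$ for every $y$, where $\int_0^{T\wedge a}g(r)\,dr<\infty$ a.s.\ for each $a\ge0$ (on $[0,a]$ dominate $L,U$ by a constant and use (H4)). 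So from now on $|f|\le g$.

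Next, put $\rho_N:=\inf\{t\ge0:\int_0^t g(r)\,dr\ge N\}$, a stopping time with $\rho_N\wedge T\wedge a\nearrow T\wedge a$ (and eventually equal to it) a.s.\ for every $a$, and set $f_N(t,y):=f(t,y)\mathbf{1}_{\{t<\rho_N\}}$. Then $f_N$ satisfies (H2)--(H4) with $\mu\le0$ — off $\{t<\rho_N\}$ it does not depend on $y$ — and it satisfies (H1) with $S\equiv0$, since $E\int_0^T|f_N(r,0)|\,dr\le E\int_0^{\rho_N}g(r)\,dr\le N$. Hence Theorem~\ref{tw5.2} gives a unique solution $(Y^N,\Gamma^N)$ of RBSDE$^T(\xi,f_N,L,U)$, with $L\le Y^N\le U$. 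The heart of the proof is the convergence of $\{Y^N\}$. For $N\le N'$ the generators $f_N,f_{N'}$ coincide with $f$ on $\{t<\rho_N\}$, so by restriction (cf.\ Remark~\ref{uw.red}) $Y^N$ and $Y^{N'}$ both solve the reflected equation with generator $f$ and the same barriers on $[0,\rho_N\wedge T\wedge a]$, and the stability estimate~(\ref{eq1.9}) (from Theorem~\ref{tw.stb}, using $\mu\le0$) bounds $\|Y^N-Y^{N'}\|_{1,\rho_N\wedge T\wedge a}$ purely through the endpoint values $Y^N_{\rho_N\wedge T\wedge a},Y^{N'}_{\rho_N\wedge T\wedge a}$, which lie in $[L,U]$. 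Combining this with the uniform a priori bound $|Y^N_\tau-Y^{N'}_\tau|\le U_\tau-L_\tau$, the uniform integrability of $\{U_\sigma-L_\sigma\}$ (this is exactly where the stronger hypothesis that $L,U$ are of class (D) is used), the fact that $P(\rho_N<T\wedge a)\to0$, and — to handle the endpoint — the identity $Y^N_T=\xi$ when $T$ is bounded and $a\ge T$, respectively (\ref{eq5.1}) for the $f_N$-problems when $T$ is unbounded (as in the proof of Theorem~\ref{tw5.2}), I expect to conclude that $\{Y^N\}$ is Cauchy in $\|\cdot\|_{1,T\wedge a}$ for every $a$. Denote the limit by $Y$ and set $\Gamma_t:=Y_0-Y_t-\int_0^t f(r,Y_r)\,dr$.

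It then remains to check that $(Y,\Gamma)$ solves RBSDE$^T(\xi,f,L,U)$ in the sense of Definition~\ref{df.mainun}. The bounds $L\le Y\le U$, the class (D) property, and $\int_0^{T\wedge a}|f(r,Y_r)|\,dr\le\int_0^{T\wedge a}g<\infty$ a.s.\ pass to the limit; the integral equation, the right-continuity and existence of left limits of $Y$, the fact that $\Gamma$ is a special $\ell$-semimartingale, and the minimality condition (d) are obtained exactly as in the proofs of Theorems~\ref{tw4.1} and~\ref{tw5.2}, by applying the already-available local existence and identification results on the intervals $[\tau,\gamma^k_\tau]$ with terminal data $Y^N_{\gamma^k_\tau}$ and letting $N\to\infty$ via dominated convergence (dominating function $g$, a.s.\ locally integrable); and (\ref{eq5.1}) for $Y$ follows from $Y^N_{T\wedge a}\to\xi$ and the convergence just established. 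The genuinely delicate step is this convergence: because (H1) is not assumed, the source term in the stability estimate need not vanish, so it is the class (D) sandwich $L\le Y^N\le U$ — together with the localisation along $\{\rho_N\}$ (and, for unbounded $T$, the exhaustion by $T\wedge a$) reducing every comparison to endpoint values squeezed between the barriers — that makes the argument work.
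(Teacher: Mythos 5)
Your route is genuinely different from the paper's: you localize the generator in \emph{time} along $\rho_N=\inf\{t:\int_0^tg\ge N\}$, whereas Proposition \ref{ex.gen} is proved by the monotone value-truncation $f_{n,m}(t,y)=(f(t,y)\wedge n\eta_t)\vee(-m\eta_t)$, which satisfies (H1) with $S\equiv 0$ for the same reason as your $f_N$ but, crucially, is monotone in each parameter; the paper therefore gets the limit $Y$ \emph{pointwise} from the comparison theorem before any norm estimate, and identifies it as a solution by passing to the limit in the Dynkin-game representation of Theorem \ref{tw.obs} along a chain $\{\tau_k\}$ with $E\int_0^{\tau_k}(|f(r,L_r)|+|f(r,U_r)|)\,dr<\infty$ and invoking Corollary \ref{wn.viwn}. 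Your uniqueness argument and the reduction to $|f|\le g$ with $g=|f(\cdot,L)|+|f(\cdot,U)|$ are fine, and for bounded $T$ your Cauchy argument closes.

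For unbounded $T$, however, the convergence step as written has a gap. Applying the stability estimate on $[0,\rho_N\wedge T\wedge a]$ leaves, on $\{\rho_N\ge T\wedge a\}$, the endpoint term $E|Y^N_{T\wedge a}-Y^{N'}_{T\wedge a}|$, which is itself one of the quantities entering $\|Y^N-Y^{N'}\|_{1,T\wedge a}$ (take $\tau=T\wedge a$), so the inequality is circular; and invoking (\ref{eq5.1}) for the $f_N$-problems does not break the circle, because that gives a limit in $a$ for each \emph{fixed} $N$, while the Cauchy estimate needs $N\to\infty$ for fixed $a$ -- the two limits do not commute without further input. The repair is to apply Theorem \ref{tw.stb} (via Remark \ref{uw.gen1}) directly on $[0,\rho_N\wedge T]$: there the terminal values of $Y^N$ and $Y^{N'}$ both equal $\xi$ on $\{\rho_N\ge T\}$ and differ by at most $(U-L)_{\rho_N}$ on $\{\rho_N<T\}$, and one must add the observation that $P(\rho_N<T)\to 0$ \emph{even when $T$ is unbounded}, because $\int_0^Tg\,dr<\infty$ a.s.: the paths of the class (D) processes $L,U$ are a.s. bounded on $[0,T)$, so $g\le |f(\cdot,k)|+|f(\cdot,-k)|$ for a random integer $k$ and (H4) applies. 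Together with the uniform integrability of $\{(U-L)_\sigma,\ \sigma\in\TT\}$ this makes $\{Y^N\}$ Cauchy in $\|\cdot\|_{1,T}$. Finally, your identification of the limit ``exactly as in Theorems \ref{tw4.1} and \ref{tw5.2}'' is too quick: those proofs lean on the monotonicity of the penalized approximants, which your $Y^N$ do not have; the clean way to verify Definition \ref{df.mainun} for the limit is the paper's, namely to pass to the limit in the representation of Theorem \ref{tw.obs} for $Y^N$ (using $E\int_0^{\tau_k}|f_N(r,Y^N_r)-f(r,Y_r)|\,dr\to0$ on the chain, by dominated convergence with dominating function $g$) and then apply Corollary \ref{wn.viwn} on $[0,\tau_k\wedge a]$.
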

\begin{proof}
We only need to prove the existence of a solution. To this end, we  write
\[
f_{n,m}(t,y)=(f(t,y)\wedge n\eta_t)\vee (-m\eta_t).
\]
By Theorem \ref{tw5.2} there exists a unique solution of RBSDE$^T(\xi,f_{n,m},L,U)$.
By Theorem \ref{tw.unq}, $Y^{n,m}\le Y^{n+1,m}$ and $Y^{n,m}\ge Y^{n,m+1}$. We put $Y^m=\lim_{n\rightarrow \infty} Y^{n,m}$.
Of course, $L\le Y^m\le U$, so $Y^m$ is of class (D).  Next, we observe that $Y^m\ge Y^{m+1}$ and we  put $Y=\lim_{m\rightarrow \infty} Y^m$. Of course, $L\le Y\le U$, so $Y$ is of class (D). By the definition,
\[
Y^{n,m}_t=Y^{n,m}_{T\wedge a}+\int_t^{T\wedge a} f_{n,m}(r,Y^{n,m}_r)\,dr+\int_t^{T\wedge a}\,d\Gamma^{n,m}_r,\quad t\in [0,T\wedge a].
\]
Since $L\le Y^{n,m}\le U$, letting $n\rightarrow \infty$ and then  $m\rightarrow \infty$
in the above equation and using  (H2)--(H4) we obtain
\[
Y_t=Y_{T\wedge a}+\int_t^{T\wedge a} f(r,Y_r)\,dr+\int_t^{T\wedge a}\,d\Gamma_r,\quad t\in [0,T\wedge a].
\]
Since $L,U$ are of class (D), by (H4) there exists a chain $\{\tau_k\}$ on $[0,T]$ such that
\[
E\int_0^{\tau_k}|f(r,L_r)|\,dr+E\int_0^{\tau_k}|f(r,U_r)|\,dr<\infty.
\]
From what has already been proved and (H2)--(H4) it follows that
\begin{equation}
\label{eq5.2}
\lim_{m\rightarrow \infty}\lim_{n\rightarrow \infty}E\int_0^{\tau_k}|f_{n,m}(r,Y^{n,m}_r)-f(r,Y_r)|\,dr=0.
\end{equation}
Hence, by Theorem \ref{tw.stb}, $\lim_{m\rightarrow \infty}\lim_{n\rightarrow \infty} \|Y^{n,m}-Y\|_{1;\tau_k}=0$. Therefore  $Y$ is a c\`adl\`ag process and $Y_{\tau_k\wedge a}\rightarrow Y_{\tau_k}$ as $a\rightarrow \infty$. Since $\{\tau_k\}$ is a chain on $[0,T]$, we get (\ref{eq5.1tutu}).
By Theorem \ref{tw.obs},
\begin{align*}
Y^{n,m}_\alpha=\esssup_{\tau_k\wedge a\ge \sigma\ge\alpha}\essinf_{\tau_k\wedge a\ge \tau\ge\alpha} & E\Big(\int_\alpha^{\tau\wedge\sigma}f_{n,m}(r,Y^{n,m}_r)\,dr \nonumber\\
&+L_\sigma\mathbf{1}_{\sigma<\tau}+U_\tau\mathbf{1}_{\tau\le\sigma<\tau_k\wedge a}+Y^{n,m}_{\tau_k\wedge a}\mathbf{1}_{\sigma=\tau=\tau_k\wedge a}|\FF_\alpha\Big).
\end{align*}
Letting  $n\rightarrow \infty$ and then $m\rightarrow \infty$ and using  (\ref{eq5.2}) we obtain
\begin{align*}
Y_\alpha=\esssup_{\tau_k\wedge a\ge \sigma\ge\alpha}\essinf_{\tau_k\wedge a\ge \tau\ge\alpha} & E\Big(\int_\alpha^{\tau\wedge\sigma}f(r,Y_r)\,dr \nonumber\\
&+L_\sigma\mathbf{1}_{\sigma<\tau}+U_\tau\mathbf{1}_{\tau\le\sigma<\tau_k\wedge a}
+Y_{\tau_k\wedge a}\mathbf{1}_{\sigma=\tau=\tau_k\wedge a}|\FF_\alpha\Big).
\end{align*}
By Corollary \ref{wn.viwn},   $(Y,\Gamma)$ is a  solution of RBSDE$^{\tau_k\wedge a }(Y_{\tau_k\wedge a},f,L,U)$. Since $\{\tau_k\}$ is a chain, we conclude that the pair $(Y,\Gamma)$ is a solution of the equation RBSDE$^{T}(\xi,f,L,U)$.

\end{proof}

\begin{prz}
\label{prz4.1.09}
Let $c$ be a positive $\mathbb F$-adapted  c\`adl\`ag   process such that $E\int_0^Tc_t\,dt=\infty$. Assume that $T$ is bounded.
We put $f(t,y)=-c_t(1+y^+)$ and $\xi\equiv 1$. Then, by Proposition \ref{ex.gen},
for all c\`adl\`ag $L,U$ of class (D) ($L\le U$) there exists a unique solution to RBSDE$^T(\xi,f,L,U)$. However, there is no solution to BSDE$^T(\xi,f)$. Indeed, assume that $(Y,M)$ is a solution to BSDE$^T(\xi,f)$. Then, in particular, $Y$ is of class (D). Let $\{\tau_k\}$ be a fundamental sequence for the local martingale $M$. By the definition of a solution to BSDE$^T(\xi,f)$,
\[
Y_0=Y_{\tau_k}+\int_0^{\tau_k} f(r,Y_r)\,dr-\int_0^{\tau_k}\,dM_r.
\]
Since $-f$ is positive, we have
\[
\int_0^{\tau_k} |f(r,Y_r)|\,dr=-Y_0+Y_{\tau_k}-\int_0^{\tau_k}\,dM_r.
\]
Taking the expectation of  both sides of the equation and using the fact that $\{\tau_k\}$ is a fundamental sequence for $M$ yields
$
E\int_0^{\tau_k} |f(r,Y_r)|\,dr=-EY_0+EY_{\tau_k}.
$
Since $Y$ is of class (D), we get
\[
E\int_0^T |f(r,Y_r)|\,dr\le 2\|Y\|_{1;T}<\infty.
\]
On the other hand,
\[
E\int_0^T |f(r,Y_r)|\,dr\ge E\int_0^T c_r\,dr=\infty.
\]
\end{prz}

\appendix

\section{Semimartingale solutions to RBSDEs}\label{app}
\label{sec2}
In this section, we collect and extend the results on semimartingale solutions to RBSDEs on general filtered spaces.
As in the whole paper,  we  consider  only reflected BSDEs  with generator $f$ independent of the martingale part of solution.
However, in the case of $L^2$ data, based on the results of this section, one can easily get the existence result in general case by the fixed point theorem (see \cite[Section 6]{K:SPA2}).

In the whole section, we assume that $L,U$ satisfy Mokobodzki's condition.
Therefore, according to the terminology of the paper, we are dealing with semimartingale solutions of RBSDEs.
As it is not standard terminology used in the literature (we introduced it here to distinguish our new kind of solutions from that existing in the literature), we keep it only in the definitions, and in all the results we drop the word ``semimartingale".

Let
$\alpha,\beta\in \mathcal T$ be  such that $\alpha\le\beta$. In what follows  $\hat\xi$ is an $\FF_\beta$-measurable random variable and
$V$ is an $\BF$-adapted
c\`adl\`ag process of finite variation on $[\alpha,\beta]$ with $V_\alpha=0$.

\subsection{Semimartingale solutions with bounded terminal time}
\label{sec2.1}

In this section, we assume that $T$ is a bounded stopping time. 

\begin{df}
\label{def.A1}
We say that a pair $(Y,M)$ of $\BF$-adapted c\`adl\`ag processes is a  solution of the  backward stochastic differential equation on $[\alpha,\beta]$ with terminal condition $\hat \xi$, generator $f+dV$ ({\rm BSDE}$^{\alpha,\beta}(\hat\xi,f+dV)$ for short) if
\begin{enumerate}
\item[(i)] $Y$ is of class (D) on $[\alpha,\beta]$, $M$ is a local martingale on $[\alpha,\beta]$
with $M_\alpha=0$,
\item[(ii)] $\int_\alpha^\beta|f(r,Y_r)|\,dr<\infty$ and
\[
Y_t=\hat\xi+\int_t^\beta f(r,Y_r)\,dr+\int_t^\beta \,dV_r-\int_t^\beta\,dM_r,\quad t\in [\alpha,\beta].
\]
\end{enumerate}
\end{df}

\begin{df}
\label{df2.2}
We say that a triple $(Y,M,K)$ of $\BF$-adapted c\`adl\`ag processes is a (semimartingale) solution of  reflected backward stochastic differential equation on $[\alpha,\beta]$ with terminal condition
$\hat \xi$, generator $f$ and lower barrier $L$ ($\underline{\rm{R}}${\rm BSDE}$^{\alpha,\beta}(\hat\xi,f,L)$ for short) if
\begin{enumerate}
\item[(i)] $Y$ is of class (D) on $[\alpha,\beta]$, $K$ is an increasing predictable process on $[\alpha,\beta]$ with $K_\alpha=0$, $M$ is a local martingale on $[\alpha,\beta]$
with $M_\alpha=0$,
\item[(ii)] $\int_\alpha^\beta|f(r,Y_r)|\,dr<\infty$ and
\[
Y_t=\hat\xi+\int_t^\beta f(r,Y_r)\,dr+\int_t^\beta\,dK_r-\int_t^\beta\,dM_r,\quad t\in [\alpha,\beta],
\]
\item[(iii)] $L_t\le Y_t,\, t\in [\alpha,\beta]$, and
\[
\int_\alpha^{\beta}(Y_{r-}-L_{r-})\,dK_r=0.
\]
\end{enumerate}
\end{df}

\begin{df}
We say that a triple $(Y,M,A)$ of $\BF$-adapted c\`adl\`ag processes is a (semimartingale) solution of  reflected backward stochastic differential equation on $[\alpha,\beta]$ with terminal condition
$\hat \xi$, generator $f$ and upper barrier $U$ ($\overline{\rm{R}}${\rm BSDE}$^{\alpha,\beta}(\hat\xi,f,U)$ for short) if $(-Y,A,-M)$ is a (semimartingale) solution to $\underline{\rm{R}}${\rm BSDE}$^{\alpha,\beta}(-\hat\xi,\tilde f,-U)$, where
$\tilde f(t,y)=-f(t,-y)$.
\end{df}

Let us consider the following hypotheses:
\begin{enumerate}
\item[(A1)] $\hat\xi$ is $\FF_\beta$-measurable, $E|\hat \xi|<\infty$  and there exists a c\`adl\`ag process $S$, which is a difference of supermartingales of class (D), such that
\[
E\int_\alpha^\beta|f(r,S_r)|\,dr<\infty.
\]
\item[(A2)]  There exists $\mu\in\BR$ such that for a.e. $t\in [\alpha,\beta]$.
\[
(y-y')(f(t,y)-f(t,y'))\le\mu |y-y'|^2,\quad y,y'\in\BR,\quad \mbox{a.s.}
\]

\item[(A3)]For a.e. $t\in [\alpha,\beta]$ the function $y\mapsto f(t,y)$ is continuous a.s.

\item[(A4)] For every $y\in\BR$, $\int_\alpha^\beta|f(r,y)|\,dr<\infty$ a.s.
\end{enumerate}

\begin{stw}
\label{tw2.0}
Assume that $\hat\xi,f$ satisfy \mbox{\rm(A1)--(A4)}, $L_\beta\le\hat\xi$ and $L^+$ is of class \mbox{\rm(D)} on $[\alpha,\beta]$.
\begin{enumerate}
\item[\rm(i)]There exists a unique  solution $(Y,M,K)$ of
${\rm{\underline{R}BSDE}}^{\alpha,\beta}(\hat\xi,f,L)$.

\item[\rm(ii)] Let $\{\hat\xi_n\}$ be a sequence of  integrable $\FF_\beta$-measurable random variables
such that $\hat\xi_n\nearrow\hat\xi$, and let $(Y^n,M^n)$ be a  solution of {\rm BSDE}$^{\alpha,\beta}(\hat\xi_n,f_n)$ with $f_n(t,y)=f(t,y)+n(y-L_t)^-$. Then
$Y^n_t\nearrow Y_t$, $t\in [\alpha,\beta]$.
\end{enumerate}

\end{stw}
\begin{proof}
By \cite[Theorem 2.7]{KR:JFA}, there exists a unique solution $(\hat Y,\hat M)$ of BSDE$^{\alpha,\beta}(\hat\xi,f)$. Since $\hat Y\vee L$
is of class (D) on $[\alpha,\beta]$, by \cite[Theorem 4.1]{K:SPA2} there exists a unique solution $(Y,M,K)$ of
${\rm{\underline{R}BSDE}}^{\alpha,\beta}(\hat\xi,f,L\vee \hat Y)$.
Let $(\hat Y^n,\hat M^n)$ be a solution of BSDE$^{\alpha,\beta}(\hat\xi^n,f^n)$ with $\hat\xi^n=\hat\xi\wedge (-n)$ and $f^n=f\wedge (-n)$. By \cite[Theorem 4.1]{K:SPA2}, there exists a unique solution $(\bar Y^n,\bar M^n,\bar K^n)$ of
${\rm{\underline{R}BSDE}}^{\alpha,\beta}(\hat\xi,f,L\vee \hat Y^n)$. Furthermore, by \cite[Proposition 2.1]{K:SPA2}, $\hat Y^n\le\hat Y\le \bar Y^n$,
which implies that $\hat Y^n\vee L\le \hat Y\vee L\le \bar Y^n$. Thus $(\bar Y^n,\bar M^n,\bar K^n)$ is a solution of
${\rm{\underline{R}BSDE}}^{\alpha,\beta}(\hat\xi,f,L\vee \hat Y)$. Consequently, by uniqueness (see \cite[Corollary 2.2]{K:SPA2}),
$(\bar Y^n,\bar M^n,\bar K^n)=(Y,M,K)$, $n\ge 1$. In particular,  for any $n\ge1$,
\[
\int_\alpha^\beta (Y_{r-}-L_{r-}\vee \hat Y^n_{r-})\,dK_r=0.
\]
Letting $n\rightarrow \infty$ we get $\int_\alpha^\beta (Y_{r-}-L_{r-})\,dK_r=0$. Since $Y\ge L$, we see that in fact  $(Y,M,K)$ is  a solution to ${\rm{\underline{R}BSDE}}^{\alpha,\beta}(\hat\xi,f,L)$. We may now repeat step by step the proof of \cite[Theorem 4.1]{K:SPA2}, with obvious changes, to show the convergence of $\{Y^n\}$.
\end{proof}

\begin{uw}
\label{uw.kd}
In  the proof of Proposition \ref{tw2.0} we have showed that under (A1)--(A4) a triple $(Y,M,K)$ is a solution of
${\rm{\underline{R}BSDE}}^{\alpha,\beta}(\hat\xi,f,L)$ if and only if  it is a solution of ${\rm{\underline{R}BSDE}}^{\alpha,\beta}(\hat\xi,f,L\vee \hat Y)$, where $(\hat Y,\hat M)$  is the solution of {\rm BSDE}$^{\alpha,\beta}(\hat\xi,f)$. Therefore, without loss of generality, one can assume that $L$ is of class (D) (and not merely that $L^+$ is of class (D)).
\end{uw}

\begin{df}
\label{df2.dfsd}
We say that a triple $(Y,M,R)$ of $\BF$-adapted c\`adl\`ag processes is a (semimartingale) solution to reflected {\rm BSDE} on $[\alpha,\beta]$ with terminal condition
$\hat \xi$, generator $f$, lower barrier $L$ and upper barrier $U$ ({\rm RBSDE}$^{\alpha,\beta}(\hat\xi,f,L,U)$ for short) if
\begin{enumerate}
\item[(a)] $Y$ is of class (D) on $[\alpha,\beta]$, $R$ is a finite variation predictable process on $[\alpha,\beta]$ with $R_\alpha=0$, $M$ is a local martingale on $[\alpha,\beta]$ with $M_\alpha=0$,
\item[(b)] $\int_\alpha^\beta|f(r,Y_r)|\,dr<\infty$ and
\[
Y_t=\hat\xi+\int_t^\beta f(r,Y_r)\,dr+\int_t^\beta\,dR_r-\int_t^\beta\,dM_r,\quad t\in [\alpha,\beta],
\]
\item[(c)] $L_t\le Y_t\le U_t,\, t\in [\alpha,\beta]$, and
\[
\int_\alpha^{\beta}(Y_{r-}-L_{r-})\,dR^+_r=\int_\alpha^{\beta}(U_{r-}-Y_{r-})\,dR^-_r=0.
\]
\end{enumerate}
\end{df}
If $\alpha=0$, we write RBSDE$^\beta$ instead of RBSDE$^{0,\beta}$.

\begin{stw}
\label{tw2.1}
Assume that $\hat\xi,f$ satisfy \mbox{\rm(A1)--(A4)}, $L_\beta\le\hat\xi\le U_\beta$ and $L^+, U^-$ are of class \mbox{\rm(D)} on $[\alpha,\beta]$.
\begin{enumerate}
\item[\rm(i)] There exists a solution $(Y,M,R)$ of {\rm{RBSDE}}$^{\alpha,\beta}(\hat\xi,f,L,U)$ if and only if
there exists a special semimartingale $X$ such that $L_t\le X_t\le U_t,\, t\in [\alpha,\beta]$.

\item[\rm(ii)] Let $\{\hat\xi_n\}$ be a sequence of $\FF_\beta$-measurable
integrable random variables such that $\hat\xi_n\nearrow\hat\xi$, and let
$(\bar Y^n,\bar A^n,\bar M^n),\, n\ge 1,$ be a solution of $\overline{{\rm{R}}}${\rm{BSDE}}$^{\alpha,\beta}(\hat\xi_n,f_n,U)$
with
\[
f_n(t,y)=f(t,y)+n(y-L_t)^-.
\]
Then $\bar Y^n_t\nearrow Y_t,\, t\in [\alpha,\beta]$.
\end{enumerate}
\end{stw}
\begin{proof}
Of course, if there exists a solution $(Y,M,R)$ of the problem RBSDE$^{\alpha,\beta}(\hat\xi,f,L,U)$, then $Y$
is a special semimartingale which lies between the barriers. Suppose  now that there exists a special semimartingale $X$ such that $L_t\le X_t\le U_t$, $t\in [\alpha,\beta]$. To show the  existence of a solution it suffices to modify slightly the proof of \cite[Theorem 4.2]{K:SPA2}. Indeed, in \cite{K:SPA2} the existence of a solution of  RBSDE$^{\alpha,\beta}(\hat\xi,f,L,U)$ is proved
under the additional assumption that $E\int_\alpha^\beta\,d |V|_r<\infty$, where $V$
is the finite variation part from the Doob-Meyer decomposition of $X$, and $L,U$ are of class (D). However, the proof of \cite[Theorem 4.2]{K:SPA2} applies also to our case. The only difference is that in the present situation the sequence $\{\delta_k\}$ appearing in the proof of \cite[Theorem 4.2]{K:SPA2}
should be defined as follows:
\[
\delta_k=\inf\{t\ge\beta:\int_\alpha^t|f(r,X_r)|\,dr\ge k\}\wedge\sigma_k,
\]
where $\{\sigma_k\}$  is a chain on $[\alpha,\beta]$ such that $E\int_\alpha^{\sigma_k}\,d |V|_r<\infty$. Such a chain exists since $V$ is predictable (and $X_\alpha$ is integrable). The fact that $L,U$ are of class (D) was used in the proof of \cite[Theorem 4.2]{K:SPA2}  only
to apply  \cite[Theorem 2.13]{K:SPA2} to some reflected BSDE with upper barrier $U$. However, we have shown in Proposition \ref{tw2.0} that \cite[Theorem 2.13]{K:SPA2} is still true when we only assume that $U^-$ is of class (D). The proof of part (ii) runs as the proof of  \cite[Theorem 4.2]{K:SPA2} with obvious changes (in \cite[Theorem 4.2]{K:SPA2} the case of  terminal conditions not depending on $n$ is considered).
\end{proof}

\begin{uw}
\label{uw2.1}
If $L_t<U_t,\, t\in [\alpha,\beta]$, and $L_{t-}< U_{t-}$, $t\in (\alpha,\beta]$, then one can easily show
that  there exists a special semimartingale $X$ such that $L_t\le X_t\le U_t,\, t\in [\alpha,\beta]$ (see \cite{T}).
\end{uw}

\subsection{Semimartingale solutions with arbitrary terminal time}

In this section, we extend some results of the previous section by dropping the assumption
that $T$ is bounded.

\begin{df}
We say that a pair $(Y,M)$ of $\BF$-adapted c\`adl\`ag processes is a solution of the backward stochastic
differential equation on $[\alpha,\beta]$ with terminal condition
$\hat \xi$, generator $f$ ({\rm BSDE}$^{\alpha,\beta}(\hat\xi,f)$ for short) if for every $a\ge0$ it is a solution of {\rm BSDE}$^{\alpha,(\beta\wedge a)\vee\alpha}(Y_{(\beta\wedge a)\vee\alpha},f)$ and
\begin{equation}
\label{eq5.1}
Y_{(\beta\wedge a)\vee\alpha}\rightarrow \hat \xi\quad\mbox{a.s. as }a\rightarrow \infty.
\end{equation}
\end{df}

\begin{df}
We say that a triple $(Y,M,K)$ of $\BF$-adapted c\`adl\`ag processes is a (semimartingale) solution of the reflected {\rm BSDE} on $[\alpha,\beta]$ with an $\FF_\beta$-measurable terminal condition
$\hat \xi$, generator $f$ and lower barrier $L$ ($\underline{\rm{R}}${\rm BSDE}$^{\alpha,\beta}(\hat\xi,f,L)$ for short) if for every $a\ge0$ it is a (semimartingale) solution of  $\underline{\rm{R}}${\rm BSDE}$^{\alpha,(\beta\wedge a)\vee\alpha}(Y_{(\beta\wedge a)\vee\alpha},f,L)$ and (\ref{eq5.1}) is satisfied.
\end{df}

\begin{df}
We say that a triple $(Y,M,A)$ of $\BF$-adapted c\`adl\`ag processes is a (semimartingale) solution of the reflected backward stochastic differential equation on $[\alpha,\beta]$ with  terminal condition
$\hat \xi$, generator $f$ and upper barrier $U$ ($\overline{\rm{R}}${\rm BSDE}$^{\alpha,\beta}(\hat\xi,f,U)$ for short) if $(-Y,A,-M)$ is a (semimartingale) solution of $\underline{\rm{R}}${\rm BSDE}$^{\alpha,\beta}(-\hat\xi,\tilde f,-U)$ with
\[
\tilde f(t,y)=-f(t,-y).
\]
\end{df}

\begin{df}
We say that a triple $(Y,M,R)$ of $\BF$-adapted c\`adl\`ag processes is a (semimartingale) solution of reflected backward stochastic differential equation on $[\alpha,\beta]$ with terminal condition
$\hat \xi$, generator $f$ and barriers $L$ and $U$ ({\rm RBSDE}$^{\alpha,\beta}(\hat\xi,f,L,U)$ for short) if (\ref{eq5.1}) is satisfied, and for any $a\ge0$,  $(Y,M,R)$ is a (semimartingale) solution to {\rm RBSDE}$^{\alpha,(\beta\wedge a)\vee\alpha}(Y_{(\beta\wedge a)\vee\alpha},f,L,U)$.
\end{df}

\begin{uw}
If $\beta<\infty$, then the above definitions are equivalent to the corresponding definitions of Section \ref{sec2.1}.
\end{uw}

\begin{uw}
\label{uw.red}
Let $S,T$ be  stopping times such that $0\le S\le \alpha\le\beta\le T$.
If the triple $(Y,M,R)$ is a (semimartingale) solution of {\rm RBSDE}$^{S,T}(\xi,f,L,U)$,
then the triple $(Y,M-M_\alpha,R-R_\alpha)$ is a (semimartingale) solution of  the problem {\rm RBSDE}$^{\alpha,\beta}(Y_\beta,f,L,U)$.
\end{uw}

\begin{uw}
\label{uw.ps.aapx}
A brief inspection of the proofs of Proposition \ref{tw2.0}, Proposition \ref{tw2.1}
shows that Proposition \ref{tw2.0} (ii), Proposition \ref{tw2.1} (ii) hold true if we replace  the constants $n$ in the definition of $f_n$ by any  positive bounded  $\BF$-progressively measurable processes $N^n$ such that $N^n_t\nearrow \infty$ a.s. as $n\rightarrow \infty$ for every
$t\in [\alpha,\beta]$.
\end{uw}

From now on, $\eta$ is a  strictly positive bounded $\BF$-progressively measurable process such that $E\int_0^T\eta_t(S_t-L_t)^-\,dt+E\int_0^T\eta_t(S_t-U_t)^+\,dt<\infty$, where $S$ is the  process appearing in (A1) (for the existence of $\eta$ see the comments after Remark \ref{uw.ps}).

\begin{stw}
\label{tw5.1}
Assume that $f$ satisfies \mbox{\rm(H1)--(H4)} on $[\alpha,\beta]$ with $\mu\le 0$, $\hat \xi$ is an $\FF_\beta$-measurable
integrable random variable such that
\[
\limsup_{a\rightarrow \infty}L_{(\beta\wedge a)\vee\alpha}\le\hat\xi
\]
and $L^+$ is of class \mbox{\rm(D)} on
$[\alpha,\beta]$. Then there exists a unique solution $(Y,M,K)$ of
${\rm{\underline{R}BSDE}}^{\alpha,\beta}(\hat\xi,f,L)$. Moreover, $Y^n_t\nearrow Y_t,\, t\in [\alpha,\beta]$,
where $(Y^n,M^n)$ is a solution to {\rm BSDE}$^{\alpha,\beta}(\xi,f_n)$ with
\[
f_n(t,y)=f(t,y)+n\eta_t(y-L_t)^-.
\]
\end{stw}
\begin{proof}
Without loss of generality we may assume that $L$ is of class (D) (see Remark \ref{uw.kd}).
By \cite[Theorem 2.9]{K:arx}, for every $n\ge 1$ there exists a unique solution $(Y^n,M^n)$ of BSDE$^{\alpha,\beta}(\xi,f_n)$.
By \cite[Proposition 3.1]{KR:JFA}, $Y^n_t\le Y^{n+1}_t$, $t\in [\alpha,\beta]$.
Define $Y$ as $Y_t= \lim_{n\rightarrow \infty} Y^n_t$, $t\in [\alpha,\beta]$.
Observe that $(Y^n,M^n,K^n)$ with $K^n_t=\int_0^tn(Y^n_r-L_r)^-\,dr$ is a solution of ${\rm{\underline{R}BSDE}}^{\alpha,\beta}(\hat\xi,f,L^n)$ with $L^n=L-(Y^n-L)^-$. Let
\[
S_t=S_\alpha+V_t+N_t,\quad t\in [\alpha,\beta],
\]
be the Doob-Meyer decomposition of $S$ ($V$ is a finite variation predictable c\`adl\`ag process with $V_\alpha=0$ and $N$ is a local martingale with $N_\alpha=0$). Let $\tau$ be a stopping time such that $\alpha\le\tau\le\beta$ and let
\[
\sigma_n=\inf\{t\ge\tau: Y^n_t\le L^n_t+\varepsilon\}\wedge \beta.
\]
By the Tanaka-Meyer formula, (H2) and the minimality condition (see (iii) of Definition \ref{df2.2}),
\begin{align*}
(Y^n_\tau-S_\tau)^+&\le E\Big((Y^n_{\sigma_n}-S_{\sigma_n})^+ +\int_\tau^{\sigma_n}\mathbf{1}_{\{Y^n_{r-}>S_{r-}\}}f(r,Y^n_r)\,dr\\
&\quad+\int_\tau^{\sigma_n}\mathbf{1}_{\{Y^n_{r-}>S_{r-}\}}\,dV_r +\int_\tau^{\sigma_n}\mathbf{1}_{\{Y^n_{r-}>S_{r-}\}}\,dK^n_r|\FF_{\tau}\Big)\\
&\le  E\Big((L_{\sigma_n}-S_{\sigma_n})^+\mathbf{1}_{\{\sigma_n<\beta\}}+ (\xi-S_{\beta})^+\mathbf{1}_{\{\sigma_n=\beta\}}\\&
\quad +\int_\tau^{\sigma_n}\mathbf{1}_{\{Y^n_{r-}>S_{r-}\}}f(r,S_r)\,dr +\int_\tau^\beta\,d|V|_r|\FF_{\tau}\Big)+\varepsilon.
\end{align*}
From this inequality, the fact that $L^+,S$ are of class (D), $Y^n\nearrow Y$ and $E\int_\alpha^\beta|f(r,S_r)|\,dr+E\int_\alpha^\beta \,d|V|_r<\infty$ we get that $Y^+$ is of class (D). Since $Y^1\le Y$ we have that $Y$ is of class (D).
Write $\beta_a= (\beta\wedge a)\vee \alpha$. By Proposition \ref{tw2.0} and Remark \ref{uw.ps.aapx} applied on the interval $[\alpha,\beta_a]$, $Y^n_t\nearrow Y^a_t$, $t\in [\alpha,\beta_a]$, where $(Y^a,M^a)$
is a solution of $\underline{{\rm{R}}}$BSDE$^{\alpha,\beta_a}(Y_{\beta_a},f,L)$. Let $M_t= M^a_t$, $t\in [\alpha,\beta_a]$. By uniqueness, $M$ is well defined. We see that $(Y,M)$ is a solution to $\underline{{\rm{R}}}$BSDE$^{\alpha,\beta_a}(Y_{\beta_a},f,L)$ for every $a\ge 0$. What is left is to show that (\ref{eq5.1}) is satisfied.  Since $Y$ is of class (D),  $\sup_{t\in [\alpha,\beta]}|Y_t|$ is finite a.s. Hence, by (H2) and (H4), there exists a chain $\{\tau_k\}$ on $[\alpha,\beta]$  such that
\[
E\int_\alpha^{\tau_k}|f(r,Y_r)|\,dr<\infty,\quad k\ge 1.
\]
Applying now  \cite[Lemma 3.8]{K:arx} on the interval $[\alpha,\tau_k]$, we get
\[
Y_{(\tau_k\wedge a)\vee\alpha}\rightarrow Y_{\tau_k}
\]
as $a\rightarrow\infty$, Since $\{\tau_k\}$ is a chain on $[\alpha,\beta]$, $P(\tau_k<\beta)\rightarrow 0$ as $k\rightarrow\infty$. Consequently,  (\ref{eq5.1}) is satisfied.
\end{proof}

\end{document}